\documentclass[12pt]{amsart}
\frenchspacing
\usepackage{amsthm}
\usepackage{amsmath}
\usepackage[margin=1.25in]{geometry}
\usepackage{amssymb}
\usepackage{verbatim}

\theoremstyle{plain}
\newtheorem{theorem}{Theorem}[section]
\newtheorem*{nonumbertheorem}{Theorem}
\newtheorem*{nonumbercorollary}{Corollary}

\newtheorem{main}{Theorem}

\newtheorem{lemma}[theorem]{Lemma}
\newtheorem{proposition}[theorem]{Proposition}

\newtheorem*{claim}{Claim}

\theoremstyle{definition}
\newtheorem*{nonumberdefinition}{Definition}
\newtheorem{definition}[theorem]{Definition}

\theoremstyle{remark}

\numberwithin{equation}{section}
\newcommand{\thm}[1]{Theorem \ref{#1}}\newcommand{\thms}[2]{Theorems \ref{#1} and \ref{#2}}
\newcommand{\Sec}[1]{Section \ref{#1}}
\newcommand{\lem}[1]{Lemma \ref{#1}}
\newcommand{\lems}[2]{Lemmas \ref{#1} and \ref{#2}}

\newcommand{\prop}[1]{Proposition \ref{#1}}
\newcommand{\TABLE}[1]{Table \ref{#1}}\newcommand{\TABLES}[2]{Tables \ref{#1} and \ref{#2}}


\newcommand{\Z}{\mathbb{Z}}
\newcommand{\Q}{\mathbb{Q}}

\newcommand{\C}{\mathbb{C}}
\newcommand{\HH}{\mathbb{H}}
\newcommand{\embedded}{\hookrightarrow}
\DeclareMathOperator{\cod}{cod}
\DeclareMathOperator{\diag}{diag}

\newcommand{\ceil}[1]{\left\lceil #1 \right\rceil}
\newcommand{\floor}[1]{\left\lfloor #1 \right\rfloor}

\newcommand{\of}[1]{\left(#1\right)}

\newcommand{\propI}{Property $\mathcal{I}$ }

\newcommand{\tensor}{\otimes}

\newcommand{\id}{\mathrm{id}}
\DeclareMathOperator{\rank}{rank}

\DeclareMathOperator{\dk}{dk}

\author{Lee Kennard}
\date{} 
\title[Positively curved metrics with large symmetry rank]{Positively curved metrics on symmetric spaces with large symmetry rank}

\begin{document}

\begin{abstract}
We prove an obstruction at the level of rational cohomology in small degrees to the existence of positively curved metrics with large symmetry rank. The symmetry rank bound is logarithmic in the dimension of the manifold. As an application, we provide evidence for a generalized conjecture of Hopf that says that no symmetric space of rank at least two admits a metric with positive curvature.
\end{abstract}
\maketitle


A well known conjecture of Hopf states that $S^2\times S^2$ admits no metric of positive sectional curvature. More generally, one might ask whether any nontrivial product manifold admits a metric with positive sectional curvature.

Another way to generalize this conjecture of Hopf is to observe that $S^2\times S^2$ is a rank two symmetric space. While the compact, simply connected rank one symmetric spaces, i.e., $S^n$, $\C P^n$, $\HH P^n$, and $\mathrm{Ca}P^2$, admit metrics with positive sectional curvature, it is conjectured that no symmetric space of rank greater than one admits such a metric. Our first main result provides evidence for this conjecture under the assumption of symmetry:

\smallskip
\begin{main}\label{symspsTHM}
Suppose $M^n$ has the rational cohomology of a simply connected, compact symmetric space $N$. If $M$ admits a positively curved metric with symmetry rank at least $2\log_2 n+7$, then $N$ is a product of spheres times either a rank one symmetric space or a rank $p$ Grassmannian $SO(p+q)/SO(p)\times SO(q)$ with $p\in\{2,3\}$.
\end{main}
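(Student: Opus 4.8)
The plan is to derive \thm{symspsTHM} by combining the paper's main cohomological obstruction with the classification and known rational cohomology of compact symmetric spaces. First I would dispose of the range where the hypothesis is empty: a closed positively curved $M^n$ has symmetry rank at most $\lfloor (n+1)/2\rfloor$, so for $n$ below an absolute constant the quantity $2\log_2 n+7$ exceeds this and cannot be attained, leaving nothing to prove; so assume $n$ is large. For such $n$, applying the main obstruction theorem to the given metric shows that the graded ring $H^*(M;\mathbb{Q})$ satisfies \propS, and since $M$ has the rational cohomology of $N$ the same holds for $H^*(N;\mathbb{Q})$. The rest of the argument is purely topological: identify the simply connected compact symmetric spaces $N$ whose rational cohomology satisfies \propS.

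Next I would reduce to one irreducible factor. Writing $N=N_1\times\cdots\times N_r$ as a Riemannian product of irreducible, simply connected, compact symmetric spaces gives $H^*(N;\mathbb{Q})\cong\bigotimes_i H^*(N_i;\mathbb{Q})$. Any such $N_i$ that is not a sphere of large dimension has nonzero reduced rational cohomology in some degree at most $8$, and because \propS is a rank-one-type condition on cohomology in low degrees it cannot survive a K\"unneth product of two tensor factors that are each nontrivial in the range it tests. Hence all but at most one of the $N_i$ is a high-dimensional sphere; calling the remaining (possibly trivial) irreducible factor $F$, the ring $H^*(F;\mathbb{Q})$ itself satisfies \propS, so $N$ is a product of spheres times such an $F$, and it remains to decide which irreducible $F$ can occur.

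I would then go through \'E.~Cartan's list, using that compact symmetric spaces are formal with well-understood cohomology rings. The rank one spaces $S^m$, $\mathbb{C}P^m$, $\mathbb{H}P^m$, $\mathrm{Ca}P^2$ have rational cohomology a truncated polynomial algebra on a single generator, so \propS holds. For higher rank I would use the standard descriptions: for an equal-rank space $G/K$ the cohomology is the coinvariant algebra $H^*(BK;\mathbb{Q})/\big(H^{>0}(BG;\mathbb{Q})\big)$, whereas $SU(n)/SO(n)$ and $SU(2n)/Sp(n)$ are rationally products of odd spheres. The pivotal case is the Grassmannians $G/(H_1\times H_2)$: the characteristic classes of the tautological $H_1$-bundle yield two algebraically independent low-degree generators as soon as $\rank H_1\ge 2$ — in degrees $4$ and $8$ for a real Grassmannian (so $p\ge 4$, since $\rank SO(p)=\lfloor p/2\rfloor$), in degrees $2$ and $4$ for a complex Grassmannian with $p\ge 2$, and in degrees $4$ and $8$ for a quaternionic Grassmannian with $p\ge 2$ — forcing $b_i\ge 2$ for a small $i$ and breaking \propS. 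This leaves, in the real case, only $p\in\{1,2,3\}$, and $p=1$ is a sphere; for $p=2$ (the complex quadric) and $p=3$ one verifies directly that the cohomology is rank-one-type up to a degree of order $q$, which for these spaces is a fixed fraction of $n$ and hence lies far beyond the range that \propS tests, so \propS does hold. The families $SO(2n)/U(n)$, $Sp(n)/U(n)$, $SU(n)/SO(n)$, $SU(2n)/Sp(n)$ are excluded for $n$ large by the analogous argument: the first two have Poincar\'e polynomial $\prod_j(1+t^{2j})$, whose degree-$6$ coefficient is $2$, and the last two have nonzero rational cohomology in degrees $5$ and $9$ but none in degree $1$, which is incompatible with the periodicity in \propS. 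Finally the exceptional symmetric spaces, together with the finitely many small-dimensional members of all of the above families, form a finite list handled one at a time: each is either excluded by a low-degree Betti-number computation on its known Poincar\'e polynomial (for instance the exceptional Hermitian and quaternion-K\"ahler symmetric spaces already have $b_i\ge 2$ near degree $8$), or — when its cohomology is accidentally rank-one-like, as for $G_2/SO(4)\simeq_{\mathbb{Q}}\mathbb{H}P^2$ — eliminated because its dimension is too small for the symmetry-rank hypothesis to be satisfiable.

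The hard part is this last step: arranging the passage through Cartan's list so that a single uniform mechanism — a Betti number at least $2$, or an incompatible spacing of generator degrees, occurring at a degree the obstruction theorem genuinely controls — rules out every space not on the target list, while keeping careful account of the low-dimensional coincidences, where the rational cohomology is deceptively rank-one-like, and confirming that in each such case the logarithmic symmetry-rank bound is vacuous. The delicate spots are the threshold $p=3$ versus $p=4$ for the real Grassmannians and the largest exceptional symmetric spaces, of dimension up to about $130$, where vacuity is not automatic and the low-degree computation must be done by hand.
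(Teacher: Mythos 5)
Your overall strategy --- apply \thm{thmP-intro} with $c=16$, split off sphere factors via the K\"unneth theorem, and run through Cartan's list of irreducible symmetric spaces using low-degree Betti numbers --- is the paper's strategy, but two of your steps fail as stated. First, your reduction to a single non-spherical irreducible factor rests on the claim that the periodicity property ``cannot survive a K\"unneth product of two tensor factors that are each nontrivial in the range it tests.'' This is false: $S^2\times\HH P^m$ and $S^3\times\HH P^m$ are $4$-periodic (they are listed in the introduction as examples), even though both factors have nonzero reduced cohomology in degree at most $4$. The correct product lemma (\lem{LEMPeriodicProductManifolds}) is more delicate: it shows that the factor $M'$ with $b_4'=1$ inherits $4$-periodicity up to degree $c$, while the complementary factor $M''$ satisfies only $b_i''=0$ for $3<i<c$ --- so $M''$ may still have $b_2''$ or $b_3''$ nonzero --- and a separate classification (\lem{LEMirreducible2}) identifies such an irreducible complementary factor as $S^2$ or $S^3$. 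This is precisely what produces the extra factor $Q\in\{*,S^2,S^3\}$ in \thm{thmSymSps}; your version of the reduction would ``exclude'' $\HH P^q\times S^3\times S^{50}$, which the cohomological obstruction does not exclude, so the lemma you are invoking cannot be proved.

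Second, your treatment of the rationally rank-one-like spaces on Cartan's list does not close. You dispose of $G_2/SO(4)$ (rationally $\HH P^2$) by observing that its dimension is too small for the symmetry-rank hypothesis to be satisfiable --- but the hypothesis is on $M^n$, and $N$ could be $G_2/SO(4)\times S^{100}$, where $n$ is large and nothing is vacuous; such an $N$ is not in the conclusion of the theorem and must be excluded. The actual exclusion is cohomological and requires the periodicity to reach degree $16$: $4$-periodicity up to degree $16$ forces $b_8=b_{12}$, which fails for $G_2/SO(4)$ since $b_8=1$ and $b_{12}=0$; likewise $E_7/E_6\times SO(2)$ and $E_8/E_7\times SU(2)$ are killed only by $b_8<b_{12}$. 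Your write-up never fixes the degree up to which periodicity holds and repeatedly appeals to ``low degrees'' (at most $8$ or so), but degree $12$ is essential: the paper's example of $E_8\times\HH P^3$, which is $4$-periodic up to degree $15$, shows that the threshold $c\geq 16$ --- hence the constant $7=\frac{c}{2}-1$ in the statement --- is sharp, so no argument confined to degrees below $12$ can succeed. (You also omit the type II irreducible symmetric spaces, i.e., the simple Lie groups themselves; they are excluded because $b_3=1$ together with periodicity forces $b_4=1$, which is impossible for a rational product of odd-dimensional spheres.)
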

\smallskip

Recall that the symmetry rank is defined as the rank of the isometry group. The assumption that the symmetry rank is at least $r$ is equivalent to the existence of an effective, isometric $T^r$-action on $M$. 

If we restrict to the case where $N$ is an irreducible symmetric space, then product manifolds are excluded, so the only possibilities are that $N$ has rank one or that $N$ is a rank two or rank three real Grassmannian. See \thm{thmSymSps} for a more detailed statement. For example, $N$ cannot be $S^k\times S^{n-k}$ with $1 < k < 16$.

One might compare this result to Theorem 5 in \cite{Wilking03}, which implies that, for a field $k$ and a closed, simply connected $n$-manifold $M$ with positive sectional curvature, if $M$ has $n\geq 6000$ and symmetry rank at least $\frac{n}{6} + 1$, then $H^*(M;k)$ is isomorphic to that of a rank one symmetric space, $S^2\times\HH P^{(n-2)/4}$, or $S^3\times\HH P^{(n-3)/4}$.

The obstruction (see \thm{thmP-intro}) we prove in order to obtain \thm{symspsTHM} is at the level of rational cohomology in small degrees. Since taking products with spheres does not affect cohomology in small degrees, and since the Grassmannians $SO(2+q)/SO(2)\times SO(q)$ and $SO(3+q)/SO(3)\times SO(q)$ have the same rational cohomology ring in small degrees as the complex and quaternionic projective spaces, respectively, our methods cannot exclude them.

\smallskip

The second main result is related to the Bott conjecture and a second conjecture of Hopf. Recall that the Bott conjecture states that a nonnegatively curved manifold is rationally elliptic, which, in paticular, implies that the Euler characteristic is positive if and only if the odd Betti numbers vanish. The conjecture of Hopf states that the Euler characteristic of an even-dimensional, positively curved manifold is positive. Hence the conjectures together would imply that even-dimensional, positively curved manifolds have vanishing odd Betti numbers. The second part of the following theorem provides some evidence for this statement:

\smallskip
\begin{main}\label{corBott}
Assume $n\geq c\geq 2$, and let $M^n$ be a closed, positively curved manifold with symmetry rank at least $2\log_2(n) + \frac{c}{2} - 1$. Then 	\begin{itemize}
	\item The Betti numbers $b_{2i}(M)$ for $2i<c$ agree with those of $S^n$, $\C P^{n/2}$, or $\HH P^{n/4}$.
	\item If $n\equiv 0\bmod{4}$, then the rational cohomology of $M$ in degrees less than $c$ agrees with that of $S^n$, $\C P^{n/2}$, or $\HH P^{n/4}$. In particular, $b_{2i+1}(M) = 0$ for $2i+1 < c$.
  \end{itemize}
\end{main}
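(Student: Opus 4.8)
The plan is to deduce the theorem from the cohomological obstruction \thm{thmP-intro}, so that the real work is entirely in that theorem and what remains is a short extraction of Betti number data. Recall the expected form of \thm{thmP-intro}: if $M^n$ is closed and positively curved with symmetry rank at least $2\log_2(n)+\tfrac{c}{2}-1$, then either $H^i(M;\Q)=0$ for all $0<i<c$, or there is a class $x\in H^d(M;\Q)$ with $d\in\{1,2,4\}$ such that cup product with $x$ is surjective, and injective above degree $0$, throughout degrees $<c$ --- a ``periodicity up to degree $c$'' whose range is precisely what the constant $\tfrac{c}{2}-1$ buys. (If the verbatim statement of \thm{thmP-intro} is phrased using Property~$\mathcal{P}$, the steps below should be read with that notation.) Throughout I also use that, $M$ being closed and positively curved, Bonnet--Myers makes $\pi_1(M)$ finite, so $b_1(M)=0$ and $H^1(M;\Q)=0$.

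For the first bullet, the vanishing of $H^1(M;\Q)$ excludes $d=1$, so either the cohomology below degree $c$ is trivial or $d\in\{2,4\}$. Iterating surjectivity of cup product by $x$ from $H^0(M;\Q)=\Q$ shows that $H^{kd}(M;\Q)$ is spanned by $x^k$, while the injectivity half gives $x^k\ne 0$, hence $b_{kd}(M)=1$, for $kd<c$. A short bookkeeping with the periodicity isomorphisms then pins down the even Betti numbers $b_{2i}(M)$ with $2i<c$: they are those of $S^n$ in the trivial case, those of $\C P^{n/2}$ when $d=2$, and those of $\HH P^{n/4}$ when $d=4$ (so $b_{4i}=1$ and $b_{4i+2}=0$). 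This gives the first bullet.

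For the second bullet, assume $n\equiv 0\bmod 4$. I would rule out the odd Betti numbers below $c$ by transporting the periodicity to the complementary top range via Poincar\'e duality, $H^{n-i}(M;\Q)\cong H^i(M;\Q)$, and running the resulting chain of isomorphisms downward in steps of $d$, anchored at $b_1(M)=0$. When $d=2$ the chain from any high odd degree reaches degree $1$ as soon as $n$ is even; when $d=4$ it does so precisely because $n-3\equiv 1\bmod 4$ (and likewise for the higher odd degrees below $c$). Hence $H^i(M;\Q)=0$ for every odd $i<c$, so the cohomology below degree $c$ is concentrated in even degrees and, being generated there by $x$, agrees as a graded ring in those degrees with that of $S^n$, $\C P^{n/2}$, or $\HH P^{n/4}$; in particular $b_{2i+1}(M)=0$ for $2i+1<c$. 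When $n\equiv 2\bmod 4$ or $n$ is odd, this downward chain fails to terminate at $b_1$, which is why only the even Betti numbers are claimed in general.

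The mathematical content lies entirely in \thm{thmP-intro}; granting it, the one nonroutine point here is the second bullet, where eliminating odd-degree classes below $c$ forces one to couple periodicity --- a priori a statement about small degrees only --- with Poincar\'e duality in the complementary range, and to check that the residues modulo $d$ fall so that the induction can be anchored at $b_1(M)=0$. This congruence check is exactly where $n\equiv 0\bmod 4$, and not merely $n$ even, is used; it also requires carrying the constant $\tfrac{c}{2}-1$ carefully so that the periodicity provided by \thm{thmP-intro} remains valid all the way up to degree $c$.
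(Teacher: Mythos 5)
Your extraction rests on a misremembered and, more importantly, an insufficient input. First, \thm{thmP-intro} does not offer a period $d\in\{1,2,4\}$: it asserts $4$-periodicity up to degree $c$ (with the periodicity element $x\in H^4(M;\Q)$ possibly zero, the rationally $(c-1)$-connected case). The trichotomy $S^n$ / $\C P^{n/2}$ / $\HH P^{n/4}$ is therefore not a case split on the period; with $x\neq 0$ it is governed by whether $b_2=1$ or $b_2=0$. And here is the real gap: $4$-periodicity up to degree $c$ gives $b_{4i}=1$ and $b_{4i+2}=b_2$, $b_{4i+3}=b_3$ for these degrees below $c$, but it puts \emph{no} bound on $b_2$ or $b_3$. (A graded vector space with $H^2$ five-dimensional and multiplication by $x$ an isomorphism in positive degrees satisfies the definition.) So your ``short bookkeeping'' for the first bullet, and in particular the claim $b_{4i+2}=0$, does not follow from \thm{thmP-intro}. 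Likewise for the second bullet: your Poincar\'e-duality chain needs the isomorphisms $b_{n-3}=b_{n-7}=\cdots=b_1$, i.e.\ periodicity in degrees between $c$ and $n$, which \thm{thmP-intro} does not provide (and $M$ is not assumed simply connected or even orientable here, so one must also be careful about invoking duality on $M$ itself).

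The paper closes exactly this gap with the stronger \thm{thmP}: there is a $c$-connected inclusion $N\embedded M$ of a closed submanifold with $H^*(N;\Q)$ $4$-periodic over its \emph{entire} range and $\dim N\equiv n\bmod 4$. On $N$, full periodicity together with Poincar\'e duality pins down the even subring (it is that of $S^m$, $\C P^m$, $\HH P^{m/4}$, or $S^2\times\HH P^{(m-2)/4}$, so in particular $b_2(N)\leq 1$), and when $\dim N\equiv 0\bmod 4$ the chain $b_3(N)=b_7(N)=\cdots=b_{\dim N-1}(N)=b_1(N)=0$ really does run all the way to the top degree, killing the odd cohomology. The $c$-connectedness of $N\embedded M$ then transfers both conclusions to $H^i(M;\Q)$ for $i<c$. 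Your instinct that Poincar\'e duality must be coupled with periodicity, and that $n\equiv 0\bmod 4$ enters through the residue of $n-3$, is the right one; but it must be executed on the fully periodic submanifold $N$ supplied by \thm{thmP}, not on $M$ with only truncated periodicity.
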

\smallskip





In order to explain our main topological result, which is the crucial step in proving \thms{symspsTHM}{corBott}, we make the following definition:

\smallskip
\begin{nonumberdefinition}
For a closed, orientable manifold $M$, we say that $H^*(M;\Q)$ is $4$-periodic up to degree $c$ if there exists $x\in H^4(M;\Q)$ such that the map $H^i(M;\Q)\to H^{i+4}(M;\Q)$ given by $y\mapsto xy$ is a surjection for $i=0$, an isomorphism for $0<i<c-4$, and an injection for $i=c-4$. If $M$ has dimension $n$ and $H^*(M;\Q)$ is $4$-periodic up to degree $n$, we simply say that $H^*(M;\Q)$ is $4$-periodic.
\end{nonumberdefinition}
\smallskip

In particular, if $x \neq 0$ in the definition, then $H^{4s}(M;\Q) \cong \Q$ for $0\leq s < \frac{c}{4}$. However we abuse notation slightly by allowing $x=0$, hence we say that a rationally $(c-1)$-connected space has 4-periodic rational cohomology up to degree $c$.

Examples of $n$-manifolds with $4$-periodic rational cohomology are $S^n$, $\C P^{n/2}$, $\HH P^{n/4}$, $S^2\times \HH P^{(n-2)/4}$, and $S^3\times \HH P^{(n-3)/4}$. 
By taking a product of one of these spaces with any rationally $(c-1)$-connected space, we obtain examples of spaces with $4$-periodic rational cohomology up to degree $c$.

The main cohomological obstruction to the existence of positively curved metrics with large symmetry rank can now be stated as follows:

\smallskip
\begin{main}\label{thmP-intro}
Let $n\geq c\geq 2$. If $M$ is a closed, simply connected, positively curved $n$-manifold with symmetry rank at least $2\log_2(n) + \frac{c}{2}-1$, then $H^*(M;\Q)$ is $4$-periodic up to degree $c$.
\end{main}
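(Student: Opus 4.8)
\smallskip

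The plan is to combine three pillars of the theory of positively curved manifolds with torus symmetry. First, Berger's theorem: a Killing field on an even-dimensional positively curved manifold has a zero, so an isometric torus action on such a manifold has nonempty fixed-point set. Second, Wilking's Connectedness Lemma: a closed, embedded, totally geodesic submanifold $N^{n-k}\subseteq M^n$ of a positively curved manifold --- in particular a component of the fixed-point set of any isometric subgroup --- has $(n-2k+1)$-connected inclusion into $M$. Third, Wilking's Periodicity Theorem, which upgrades a sufficiently connected inclusion $\iota\colon N^{n-k}\hookrightarrow M^n$ of closed oriented manifolds to $k$-periodicity of $H^*(M;\Q)$ in a range governed by the connectivity of $\iota$. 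The glue is an elementary transfer principle: if $\iota\colon N\hookrightarrow M$ is $j$-connected and $H^*(N;\Q)$ is $p$-periodic up to degree $e$ with $p<j$, then, lifting the periodicity class through the isomorphism $H^p(M;\Q)\xrightarrow{\ \cong\ }H^p(N;\Q)$, the cohomology $H^*(M;\Q)$ is $p$-periodic up to degree $\min(e,j)$.

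After disposing of parity and degenerate cases --- if $n$ is odd, pass either to an even-codimension circle fixed-point component or, if some circle acts freely, to the positively curved even-dimensional quotient while tracking cohomology through the Gysin sequence; if $c\le 4$ the conclusion is essentially vacuous --- assume $n$ even, $c\ge 5$, and fix a $T^r$-fixed point $q$. Decompose the isotropy representation $T_qM=V_0\oplus V_1\oplus\cdots\oplus V_m$ into the trivial summand $V_0=T_q(M^{T^r})$ and two-dimensional weight spaces $V_i$ with weights $\alpha_i\in\mathrm{Hom}(T^r,S^1)\cong\Z^r$; the fixed-point component of a circle $\sigma\le T^r$ through $q$ has codimension $2\cdot\#\{i:\alpha_i|_\sigma\neq 0\}$, so one wants $\sigma$ annihilating as many weights $\alpha_i$ as possible. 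I would then build an iterated chain of fixed-point components $M=M_0\supsetneq M_1\supsetneq\cdots\supsetneq M_l$: at stage $i$, use positive curvature (see below) to select a subcircle of the residual torus whose fixed-point component $M_{i+1}$ in $M_i$ has codimension positive but at most $\tfrac12(\dim M_i-c+1)$, so that by the Connectedness Lemma the inclusion $M_{i+1}\hookrightarrow M_i$ is at least $c$-connected, $M_{i+1}$ stays simply connected, and an isometric torus action of rank essentially one less is inherited by $M_{i+1}$. Since each step reduces $\dim M_i-(c-1)$ by a definite factor, one reaches after at most $2\log_2 n$ steps a manifold $M_l$ of dimension about $c$ on which a torus of rank at least $\tfrac c2-1$ acts; this is (almost) maximal symmetry rank, so a structure theorem in the spirit of Grove--Searle and Wilking forces $H^*(M_l;\Q)$ to be $4$-periodic up to degree $c$, and then the transfer principle, applied up the chain using that every inclusion was at least $c$-connected and that $4<c$, propagates this back to $H^*(M;\Q)$. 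In the hypothesis $2\log_2(n)+\tfrac c2-1$, the $2\log_2 n$ bounds the length of the chain --- the factor two reflecting that the geometrically available codimension at each step, though a definite fraction of the dimension, falls a little short of the full half --- while $\tfrac c2-1$ is exactly the symmetry rank left over to run the base case on a manifold of dimension about $c$.

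The crux, and the only place positive curvature is genuinely exploited, is the inductive step: showing that unless $M_i$ is already extremely symmetric, some subcircle of its residual torus has a fixed-point component of codimension a definite fraction of $\dim M_i$, neither too small (so that the chain terminates quickly) nor too large (so that the inclusion stays $c$-connected). Without a curvature hypothesis this fails, since the isotropy weights could be in general position and every subcircle's fixed-point component could have nearly full codimension. Positive curvature rescues it through Frankel's theorem --- two closed totally geodesic submanifolds whose dimensions sum to at least $\dim M_i$ must intersect --- which, applied to the fixed-point components of the various weight circles, yields strong linear-algebraic constraints on the $\alpha_i$ in $\Z^r$; these, combined with a counting argument over the roughly $2\log_2 n$ coordinate directions of the torus (equivalently, averaging the codimension over the involutions in the $2$-torus $(\Z/2)^r\subseteq T^r$), produce the required subcircle. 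Carrying out this weight analysis cleanly, while simultaneously controlling $\dim M^{T^r}$ and the case in which $T^r$ has no global fixed point, keeping the residual torus action effective after quotienting ineffective kernels, and maintaining enough connectivity at each stage both to preserve simple connectivity and to run the transfer principle back up, is where essentially all of the work lies.

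\smallskip
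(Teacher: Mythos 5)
Your toolbox overlaps substantially with the paper's (Berger's theorem, the connectedness theorem, a weight count over $\Z_2^r\subseteq T^r$, and the transfer of periodicity up a $c$-connected inclusion, which is exactly how the paper deduces the statement from its \thm{thmP}), but the architecture has a gap I do not see how to close: nothing in your argument produces the number $4$. Wilking's periodicity theorem, the version you invoke, yields $k$-periodicity where $k$ is the codimension of the highly connected submanifold, and the codimensions you construct are only bounded above by roughly $\frac{1}{2}(\dim M_i-c)$; they are neither equal to $4$ nor controlled modulo $4$. The ingredient the paper cannot do without, and which is absent from your proposal, is the refined Periodicity Theorem of \cite{Kennard12pre} (\thm{THMstrongperiodicity} here): two totally geodesic submanifolds of codimensions $k_1,k_2$ intersecting transversely with $2k_1+2k_2\leq n$ force $\gcd(4,k_1)$-periodicity of $H^*(\,\cdot\,;\Q)$. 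This is what converts involutions of moderate codimension into $4$-periodicity, and it is why the actual proof works with \emph{pairs} of involutions $\sigma,\tau$ whose fixed-point components intersect transversely (produced by the Griesmer bound, \prop{Griesmer}), rather than with a single descending chain.

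The descent itself is also not justified as stated. You have no lower bound on the codimension at each step, so there is no reason the chain shortens $\dim M_i-(c-1)$ by a definite factor or terminates within $2\log_2 n$ steps; and the kernel of the induced action on a fixed-point component can grow by more than one per step, so the residual rank at the bottom need not be $\frac{c}{2}-1$. Even granting the bookkeeping, your base case is a manifold of dimension about $c$ with symmetry rank about $\frac{c}{2}-1$, which falls short of the Grove--Searle maximum $\frac{c+1}{2}$; no off-the-shelf ``almost maximal rank'' theorem gives $4$-periodicity up to degree $c$ there for general $c$ (and for the relevant value $c=16$ you would need a structure theorem for $16$-manifolds with $7$-torus actions). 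The paper's induction is built precisely to absorb both issues: its \lem{lemI} sets up the dichotomy that either some fixed-point component $S$ satisfies $\dim S\leq n/2^{\dk(S)/2}$, in which case the induction hypothesis (on dimension, with the rescaled rank bound) applies directly to $S$, or else all relevant components are large, in which case the transverse pair from \prop{Griesmer} feeds into the periodicity theorem; the genuine base case is only dimensions at most five, via \thm{Grove-Searle}. To salvage your outline you would need to import the $\gcd(4,k_1)$ periodicity theorem and replace the linear descent plus terminal classification with this dichotomy.
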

\smallskip

Taking $c = 16$ and restricting to the situation where $M$ has the rational cohomology of a compact symmetric space, we obtain \thm{symspsTHM} as a corollary by comparing this obstruction with the classification of symmetric spaces. See \Sec{symsps} for details.

Another, more immediate consequence of \thm{thmP-intro} follows by taking $c=10$ and concluding that the Betti numbers of $M$ satisfy $b_4(M) = b_8(M) \leq 1$:

\smallskip
\begin{nonumbercorollary}\label{corCheeger}
If $M^n$ is the connected sum of $M',M''\in\{\C P^{n/2}, \HH P^{n/4}, \mathrm{Ca}P^2\}$, then $M$ does not admit a positively curved metric with symmetry rank at least $2\log_2 (4n)$.
\end{nonumbercorollary}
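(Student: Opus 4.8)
The plan is to deduce the statement from \thm{thmP-intro} with the choice $c = 10$, together with an elementary computation of Betti numbers. Arguing by contradiction, suppose $M = M' \# M''$ admits a positively curved metric with symmetry rank at least $2\log_2(4n) = 2\log_2(n) + 4$. Since $2\log_2(n) + 4 = 2\log_2(n) + \frac{c}{2} - 1$ for $c = 10$, \thm{thmP-intro} applies as soon as $n \geq c = 10$; and for $n < 10$ the statement is vacuous, because each of $\C P^{n/2}$, $\HH P^{n/4}$, $\mathrm{Ca}P^2$ is simply connected, hence so is $M$, and by the Grove--Searle maximal symmetry rank theorem a closed positively curved $n$-manifold has symmetry rank at most $\floor{(n+1)/2} \leq 5 < 6 \leq 2\log_2(4n)$ when $n \leq 9$. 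So I may assume $n \geq 10$, in which case $M$ is a closed, simply connected, positively curved $n$-manifold and \thm{thmP-intro} gives that $H^*(M;\Q)$ is $4$-periodic up to degree $10$.

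Unwinding the definition, there is a class $x \in H^4(M;\Q)$ such that $y \mapsto xy$ maps $H^0(M;\Q)$ onto $H^4(M;\Q)$ and is an isomorphism $H^4(M;\Q) \to H^8(M;\Q)$; since $M$ is connected this forces $b_4(M) = b_8(M) \leq b_0(M) = 1$. On the other hand, because $n > 8$ the Mayer--Vietoris sequence for the connected sum yields $b_j(M) = b_j(M') + b_j(M'')$ for $j \in \{4, 8\}$. Here $b_4 = b_8 = 1$ for $\C P^{n/2}$ (as $n \geq 10$ forces $n/2 \geq 4$) and for $\HH P^{n/4}$ (as then $4 \mid n$, so $n \geq 12$ and $n/4 \geq 2$), whereas $\mathrm{Ca}P^2$ has $b_4 = 0$, $b_8 = 1$, and can appear only when $n = 16$. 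Thus each summand contributes $2$ to $b_4(M) + b_8(M)$ unless it equals $\mathrm{Ca}P^2$, in which case it contributes $1$ and adds $0$ to $b_4(M)$. If neither summand is $\mathrm{Ca}P^2$ then $b_4(M) + b_8(M) \geq 3$, so $\max\{b_4(M), b_8(M)\} \geq 2$; if some summand is $\mathrm{Ca}P^2$ then $n = 16$ and $b_8(M) = b_8(M') + b_8(M'') = 2$. Either way this contradicts $b_4(M) = b_8(M) \leq 1$, which completes the argument.

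The reduction to \thm{thmP-intro}, the extraction of the bound $b_4 = b_8 \leq 1$, and the connected-sum formula are all routine, so the only step meriting real attention is the final case analysis. The point is that, since $b_4(\mathrm{Ca}P^2) = 0$, one cannot obtain the contradiction from $b_4$ alone; one must also carry $b_8$ and exploit the fact that a copy of $\mathrm{Ca}P^2$ pins the dimension to $16$, so that $\C P^8$ and $\HH P^4$ genuinely have $b_8 = 1$. It is also worth sanity-checking the lowest admissible dimensions $n \in \{10, 12, 14, 16\}$ directly, where the degree-$4$ and degree-$8$ generators of $\C P^{n/2}$ and $\HH P^{n/4}$ lie near the top of the cohomology.
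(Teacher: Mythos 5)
Your proof is correct and follows exactly the route the paper indicates: apply \thm{thmP-intro} with $c=10$ (noting $2\log_2(4n)=2\log_2 n+4$), deduce $b_4(M)=b_8(M)\leq 1$ from $4$-periodicity up to degree $10$, and contradict this with the additivity of Betti numbers under connected sum; the low-dimensional cases and the $\mathrm{Ca}P^2$ case analysis are handled correctly. No gaps.
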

\smallskip

On the other hand, recall that such connected sums admit metrics, called Cheeger metrics, with nonnegative curvature (see \cite{Cheeger73}).

A final corollary relates to a conjecture of Chern, which states that, for a positively curved manifold, every abelian subgroup of the fundamental group is cyclic. Examples given in \cite{Shankar98, Bazaikin99, Grove-Shankar00} show that this conjecture is false. However modified versions of the Chern conjecture have been verified under the addtional assumption that the symmetry rank is at least a linear function of the dimension (see \cite{Rong02, Wilking03, Frank-Rong-Wang??pre, Wang07}).

\smallskip
\begin{nonumbercorollary}\label{corChern}
If $M^{4n+1}$ is a closed, positively curved manifold with symmetry rank $2\log_2(4n+1)$, then $\pi_1(M)$ is isomorphic to the fundamental group of a rational homology sphere $\Sigma^{4m+1}$ that admits positive sectional curvature.
\end{nonumbercorollary}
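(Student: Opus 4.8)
The plan is to mimic the proof of \thm{thmP-intro}, additionally tracking the fundamental group and the dimension modulo $4$. Since $M$ is positively curved, Bonnet--Myers shows that $\pi_1(M)$ is finite, and since $\dim M=4n+1$ is odd, Synge's theorem shows that $M$ is orientable. The goal is to produce a closed, orientable, positively curved, totally geodesic submanifold $\Sigma\subseteq M$ of dimension $4m+1$ for some $m$ that is a rational homology sphere and satisfies $\pi_1(\Sigma)\cong\pi_1(M)$; since $\Sigma$ inherits a positively curved metric from $M$, it is then exactly the space required by the corollary.

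To construct $\Sigma$ I would recurse on the dimension, maintaining at each stage a closed positively curved manifold $P$ of dimension $\equiv 1\bmod 4$ carrying an effective isometric $T^r$-action with $r$ at least $2\log_2(4n+1)$ minus a controlled loss, and with $\pi_1(P)\cong\pi_1(M)$. If $r$ is not yet large relative to $\dim P$, I would use the combinatorial argument on the weight system of the torus action from the proof of \thm{thmP-intro} to choose an involution $\sigma\in T^r$ whose fixed-point set has a component $F$ of codimension $k$ that is positive, even, divisible by $4$, and at most about half of $\dim P$. Then $F$ is totally geodesic, hence positively curved, of dimension $\dim P-k\equiv 1\bmod 4$; by Wilking's connectedness lemma (using positive curvature and $k\lesssim\tfrac{1}{2}\dim P$) the inclusion $F\embedded P$ is at least $2$-connected, so $\pi_1(F)\cong\pi_1(M)$; and the $T^r$-action preserves the connected submanifold $F$ and factors through $T^r/\langle\sigma\rangle\cong T^r$, hence restricts to an effective action of a torus of rank $r$ minus the dimension of a (typically finite) kernel. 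Replacing $P$ by $F$ and iterating drives the dimension down geometrically while the rank barely changes. The degenerate configurations --- in which $T^r$ has no fixed point, or a subcircle acts freely --- would be handled by the same devices used for \thm{thmP-intro}, namely passing to the fixed-point set of a suitable subtorus or to an appropriate quotient, checking in each case that $\pi_1$ is unaffected, or else that $P$ is already a rational homology sphere so that $\Sigma=P$ works.

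The recursion stops once $r$ has become large relative to $\dim P$ --- precisely, once the hypothesis of \thm{thmP-intro} holds for the universal cover $\tilde P$ (whose symmetry rank is still at least that of $P$, because $\pi_1(P)$ is finite, so the cover is finite) with $c$ exceeding roughly half of $\dim\tilde P$, so that Poincar\'e duality propagates the $4$-periodicity to all degrees. At that point a short computation rules out a nonzero periodicity class $x\in H^4(\tilde P;\Q)$: since $\dim\tilde P\equiv 1\bmod 4$ and $\tilde P$ is simply connected, periodicity from $H^0$ and Poincar\'e duality from the vanishing $H^1$ would make some cohomology group in a degree $\equiv 0\bmod 4$ simultaneously one-dimensional and zero. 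Hence $x=0$, so $\tilde P$ --- and therefore its quotient $P$, which is orientable of odd dimension --- is a rational homology sphere, and we take $\Sigma=P$. (The finitely many small dimensions not reached by this argument are instead settled by the classification of positively curved manifolds of almost maximal symmetry rank, which in dimensions $\equiv 1\bmod 4$ produces only rational homology spheres.) The main obstacle is the bookkeeping in the recursive step: one must guarantee that at every stage an admissible involution exists --- codimension divisible by $4$, at most half the dimension, and not so small that the dimension fails to drop quickly enough or the residual symmetry rank falls below the threshold that starts the base case --- and then verify that the coefficient $2$ in $2\log_2(4n+1)$, rather than a larger constant, is not eroded past this threshold over the $\Theta(\log n)$ steps of the recursion. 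This is exactly where the sharp estimates on the weight system, together with the Frankel-type intersection bounds in positive curvature, from the proof of \thm{thmP-intro} are needed.
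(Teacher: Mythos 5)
Your overall strategy --- pass to the universal cover, locate a totally geodesic rational homology sphere of dimension $\equiv 1 \bmod 4$ on which the deck group still acts, and quotient --- is the right idea, but the mechanism you propose for producing that submanifold does not work, and it re-derives machinery the paper already packages for exactly this purpose. The paper's proof is two lines: apply \thm{thmP} (not \thm{thmP-intro}) to $\tilde{M}$ with $c=2$. That theorem hands you a $2$-connected inclusion $N\embedded \tilde{M}$ with $H^*(N;\Q)$ \emph{fully} $4$-periodic (not merely up to degree $c$), with $\dim N\equiv \dim M\equiv 1\bmod 4$, and --- this is the clause of \thm{thmP} your proposal never invokes --- with every free group action commuting with the torus, in particular the deck action of $\pi_1(M)$, restricting to $N$. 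Since $N$ is simply connected and $4$-periodic of dimension $4m+1$, Poincar\'e duality kills the periodicity class ($H^{4m}(N;\Q)\cong H^1(N;\Q)=0$), so $N$ is a rational homology sphere and $\Sigma=N/\pi_1(M)$ is the required positively curved rational homology sphere. The full periodicity of $N$ comes from the Periodicity Theorem (\thm{THMstrongperiodicity}) applied to transversely intersecting fixed-point sets inside the induction proving \thm{thmP}; it is not obtained by pushing $c$ up to half the dimension.

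The genuine gap in your version is the termination of your recursion. You stop only when \thm{thmP-intro} applies to $\tilde{P}$ with $c>\tfrac{1}{2}\dim\tilde{P}$, which forces $\dim\tilde{P}\lesssim 4r\approx 8\log_2(4n+1)$, while \thm{Grove-Searle} requires $\dim\tilde{P}\geq 2r-1$; so you must land in a window of bounded ratio after reducing the dimension from $4n+1$ to $O(\log n)$. Nothing in your argument controls either (i) how fast the dimension actually drops --- the Griesmer argument bounds the codimension of the chosen fixed-point component from above by roughly $\tfrac{1}{2}\dim P$ but not from below, so it may be $4$ at every step, requiring $\Theta(n)$ iterations --- or (ii) how much rank is lost to the kernels $\ker(T|_F)$ accumulated over those iterations, which can easily exceed the entire budget of $2\log_2(4n+1)$. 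The paper confronts exactly this tension in \lem{lemI} but resolves it differently: either the dimension has dropped enough relative to the kernel growth that the induction hypothesis of \thm{thmP} applies in a single step (no long chain), or one obtains lower bounds on the dimensions of the relevant fixed-point sets that feed the Periodicity Theorem. Without that Steenrod-algebra endgame, your recursion has no base case it can be shown to reach.
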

\smallskip

We use many ideas from \cite{Wilking03, Kennard12pre}, including Wilking's connectedness theorem in \cite{Wilking03} and the periodicity theorem in \cite{Kennard12pre}. These theorems place restrictions on the cohomology of a closed, positively curved manifold in the presence of totally geodesic submanifolds of small codimension (see \Sec{Preliminaries}). Since fixed-point sets of isometries are totally geodesic, these become powerful tools in the presence of symmetry. The connection made in \cite{Wilking03} to the theory of error-correcting codes also plays a role. Here we will use the Griesmer bound, which is well suited to the logarithmic symmetry rank bound with which we are working.

The proofs of our results involve a stronger version of \thm{thmP-intro}, namely \thm{thmP}. It states that, under the assumptions of \thm{thmP-intro}, there exists a $c$-connected inclusion $N \subseteq M$ of a compact submanifold $N$ such that $H^*(N;\Q)$ is $4$-periodic. Moreover, one can ensure that $\dim N \equiv \dim M \bmod{4}$. The advantage of this statement is that one can apply Poincar\'e duality to conclude the following about $N$:
	\begin{enumerate}
	\item the subring of $H^*(N;\Q)$ made up of elements of even degree is isomorphic to that of $S^m$, $\C P^m$, $\HH P^{m/4}$, or $S^2\times \HH P^{(n-2)/4}$, and
	\item if $\dim M \equiv 0 \bmod{4}$, then $N$ has vanishing odd-dimensional cohomology.
	\end{enumerate}
For $i<c$, the map $H^i(M;\Q) \to H^i(N;\Q)$ induced by inclusion is an isomorphism, so one can use these observations to conclude \thm{corBott}. The corollary above about the Chern conjecture follows in a similar way but is a little more involved. It is proven in the discussion following \thm{thmP}.

\smallskip
This paper is organized as follows. In \Sec{Preliminaries}, we quote some preliminary results and prove a lemma using the Griesmer bound. In \Sec{P}, we prove \thms{thmP}{thmP-intro}. In \Sec{symsps}, we study the topological obstructions imposed by \thm{thmP-intro} and prove \thm{symspsTHM}.

\subsection*{Acknowledgements}
This work is part of the author's Ph.D. thesis. The author would like to thank his advisor, Wolfgang Ziller, for helpful comments and for suggesting this line of work. The author would also like to thank Anand Dessai for useful discussions.

\bigskip
\section{Preliminaries and the Griesmer bound}\label{Preliminaries}
\bigskip

An important result for this work is Wilking's connectedness theorem:

\begin{theorem}[Connectedness Theorem, \cite{Wilking03}]\label{THMconnectednesstheorem}
Suppose $M^n$ is a closed Riemannian manifold with positive sectional curvature.
 \begin{enumerate}
  \item If $N^{n-k}$ is connected and totally geodesic in $M$, then $N\embedded M$ is $(n- 2k + 1)$-connected. 
  \item If $N_1^{n-k_1}$ and $N_2^{n-k_2}$ are totally geodesic with $k_1\leq k_2$, then $N_1\cap N_2\embedded N_2$ is $(n - k_1 - k_2)$-connected.
 \end{enumerate}
\end{theorem}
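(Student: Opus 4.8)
The statement is quoted from \cite{Wilking03}, so I will only sketch the argument. The proof of part (1) is a Morse-theoretic analysis of the energy functional on the space of paths in $M$ with endpoints in the totally geodesic submanifold $N$. Concretely, I would set $P = \{\ga : [0,1]\to M \st \ga(0),\ga(1)\in N\}$ and study $E(\ga) = \int_0^1 |\dot\ga|^2\,dt$. The critical points of $E$ are the geodesics meeting $N$ orthogonally at both ends; $N$ itself sits inside $P$ as the constant paths, which form the minimum set $E^{-1}(0)$. Standard Morse theory for the path space (after passing to a finite-dimensional approximation, or working with the Palais--Smale condition) shows that the pair $(P, N)$ is $(\la-1)$-connected, where $\la$ is the smallest index of a nonconstant critical geodesic; and in turn the inclusion $N\embedded M$ is $\la$-connected because $P$ is homotopy equivalent to $M$ via evaluation at the midpoint (the based loop fibration / path fibration comparison). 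So the whole problem reduces to bounding from below the index of any geodesic $\ga$ from $N$ to $N$ meeting $N$ orthogonally.

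The index computation is where positive curvature enters, and it is the main obstacle. One decomposes the index form on orthogonal Jacobi fields along $\ga$ into a ``Bott--Samelson''-type contribution and boundary terms coming from the second fundamental form of $N$, which vanishes since $N$ is totally geodesic. The key input is that along a geodesic of length $\ell$ in a positively curved manifold, the index form is negative definite on a subspace whose dimension grows with $\ell$; combined with the fact that $\ga$ leaves $N$ (codimension $k$) and returns to $N$, one gets two independent families of variation fields, forcing $\la \geq n - 2k + 1$. I would carry this out by choosing a parallel orthonormal frame along $\ga$, writing variation fields of the form $f(t)E_i(t)$ with $f$ supported suitably, and estimating $\int (|f'|^2 - \langle R(f E_i,\dot\ga)\dot\ga, fE_i\rangle)$; positivity of curvature makes the curvature term strictly positive, and a counting argument over the $n-1$ normal directions, together with the $k$-dimensional normal space to $N$ at each endpoint contributing ``missing'' directions only once each rather than twice, yields the bound $n-2k+1$. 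This is the delicate part: one must track exactly how the two endpoint conditions interact so as not to lose a factor.

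For part (2), I would reduce to part (1) by a doubling or intersection argument. Consider $N_1$ and $N_2$ totally geodesic of codimensions $k_1\leq k_2$. One shows first that $N_1\cap N_2$ is itself totally geodesic (an intersection of totally geodesic submanifolds, being a connected component of a fixed set of the relevant reflections or simply by the second-fundamental-form characterization) and then studies the path space of paths from $N_1\cap N_2$ into $N_2$, or alternatively applies the connectedness principle to the pair inside $N_2$ viewing $N_1\cap N_2$ as totally geodesic of codimension $\leq k_1$ in $N_2$. The Morse index estimate, now relative to the submanifold $N_2$ which inherits positive curvature, gives that $N_1\cap N_2\embedded N_2$ is $(\dim N_2 - (\text{codim of } N_1\cap N_2 \text{ in } N_2))$-connected; bounding that codimension by $k_1$ (since $N_1\cap N_2$ is cut out in $N_2$ by conditions transverse to at most $k_1$ directions) produces the claimed $(n-k_1-k_2)$-connectedness. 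Again the subtlety is bookkeeping the codimensions correctly; the geometric input is the same index estimate as in part (1).
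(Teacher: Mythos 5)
The paper itself gives no proof of this theorem---it is quoted directly from \cite{Wilking03}---so your sketch is measured against Wilking's argument. Your overall framework (Morse theory of the energy functional on a path space with free boundary in totally geodesic submanifolds, plus an index estimate coming from positive curvature) is the right one, but two of your reduction steps contain genuine errors. First, the passage from the path space to the pair $(M,N)$: the space $P=\{\ga:[0,1]\to M \st \ga(0),\ga(1)\in N\}$ is \emph{not} homotopy equivalent to $M$ via evaluation at the midpoint (take $N$ a point: then $P\simeq \Omega M$). The correct link is a degree shift, $\pi_i(P,N)\cong\pi_{i+1}(M,N)$, obtained from the fibration $\mathrm{ev}_0\colon P\to N$ with its section by constant paths and the standard identification of $\pi_i$ of the space of paths from a fixed $q\in N$ to $N$ with $\pi_{i+1}(M,N,q)$; with the index bound $\la\geq n-2k+1$ this gives $\pi_j(M,N)=0$ for $j\leq n-2k+1$, whereas your claimed equivalence would be off by one even if it were true. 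Second, the index estimate itself: with variation fields $fE_i$ and non-constant $f$ the term $\int|f'|^2$ competes against the curvature term, and positive curvature alone (with no control on the length of $\ga$) cannot make the second variation negative. The whole point of the free-boundary setup is that one may take $f\equiv 1$: parallel fields orthogonal to $\dot\ga$ that are tangent to $N$ at both ends are admissible, the boundary terms vanish because $N$ is totally geodesic, and the second variation is $-\int\langle R(E,\dot\ga)\dot\ga,E\rangle<0$ irrespective of length; the dimension count is $(n-1)-2(k-1)=n-2k+1$, since $\dot\ga(0)$ and $\dot\ga(1)$ each already account for one normal direction.

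The more serious gap is in part (2). A component of $N_1\cap N_2$ is indeed totally geodesic in $N_2$ of codimension at most $k_1$, but applying part (1) inside the positively curved $(n-k_2)$-manifold $N_2$ only yields that $N_1\cap N_2\embedded N_2$ is $(n-k_2-2k_1+1)$-connected, which is strictly weaker than the asserted $(n-k_1-k_2)$-connectedness whenever $k_1\geq 2$; so part (2) is not a formal consequence of part (1). The actual proof repeats the Morse-theoretic argument on the space of paths in $M$ running from $N_1$ to $N_2$: nonconstant critical points are geodesics meeting $N_1$ orthogonally at $t=0$ and $N_2$ orthogonally at $t=1$, and parallel fields tangent to $N_1$ at one end and to $N_2$ at the other form a space of dimension at least $(n-1)-(k_1-1)-(k_2-1)=n-k_1-k_2+1$, so each codimension is paid only once; the same path-space identification (now with minimum set the constant paths, i.e.\ $N_1\cap N_2$, and with the evaluation fibration over $N_2$) then yields the claimed connectedness of $N_1\cap N_2\embedded N_2$. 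You should either carry out this asymmetric two-submanifold estimate or cite it; the ``reduce to part (1) inside $N_2$'' route cannot give the stated bound.
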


Recall an inclusion $N\embedded M$ is called $h$-connected if $\pi_i(M,N)=0$ for all $i\leq h$. It follows from the relative Hurewicz theorem that the induced map $H_i(N;\Z) \to H_i(M;\Z)$ is an isomorphism for $i<h$ and a surjection for $i=h$. The following is a topological consequence of highly connected inclusions of closed, orientable manifolds:

\begin{theorem}[\cite{Wilking03}]
Let $M^n$ and $N^{n-k}$ be closed, orientable manifolds. If $N\embedded M$ is $(n-k-l)$-connected with $n-k-2l>0$, then there exists $e\in H^k(M;\Z)$ such that the maps $H^i(M;\Z)\to H^{i+k}(M;\Z)$ given by $x\mapsto ex$ are surjective for $l\leq i<n-k-l$ and injective for $l<i\leq n-k-l$.
\end{theorem}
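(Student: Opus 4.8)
The plan is to identify $e$ with the image of the Thom class of the normal bundle of $N$ in $M$, and then to factor cup product with $e$ as the composite of the restriction map $i^*$ and a Gysin (umkehr) homomorphism $i_!$, each of whose range of isomorphism is dictated by the connectedness hypothesis together with Poincar\'e duality.

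First I would set up the class $e$. Since $M$ and $N$ are orientable, the normal bundle $\nu$ of $N$ in $M$ is orientable and hence carries a Thom class $U\in H^k(M,M\setminus N;\Z)$, using the excision isomorphism $H^*(M,M\setminus N;\Z)\cong H^*(\nu,\partial\nu;\Z)$. I would define $e\in H^k(M;\Z)$ to be the image of $U$ under the natural map $H^k(M,M\setminus N;\Z)\to H^k(M;\Z)$; equivalently, $e$ is Poincar\'e dual to $i_*[N]\in H_{n-k}(M;\Z)$. Next I would introduce the umkehr homomorphism $i_!\colon H^i(N;\Z)\to H^{i+k}(M;\Z)$, defined either by composing the Thom isomorphism $H^i(N;\Z)\cong H^{i+k}(M,M\setminus N;\Z)$ with the map $H^{i+k}(M,M\setminus N;\Z)\to H^{i+k}(M;\Z)$ from the long exact sequence of the pair, or equivalently as $\mathrm{PD}_M^{-1}\circ i_*\circ \mathrm{PD}_N$. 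The point is the projection formula $i_!(i^*x)=x\cup i_!(1)=x\cup e$, which exhibits cup product with $e$ as the composite $H^i(M;\Z)\xrightarrow{i^*}H^i(N;\Z)\xrightarrow{i_!}H^{i+k}(M;\Z)$.

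It then remains to bound the two factors. As recalled above, the $(n-k-l)$-connectedness of $N\embedded M$ gives that $i_*\colon H_j(N;\Z)\to H_j(M;\Z)$ is an isomorphism for $j<n-k-l$ and surjective for $j=n-k-l$. Dualizing through the natural universal coefficient short exact sequence (and noting that in the relevant degrees the $\mathrm{Ext}$ terms already agree), one gets that $i^*\colon H^i(M;\Z)\to H^i(N;\Z)$ is an isomorphism for $i<n-k-l$ and injective for $i=n-k-l$. On the other side, Poincar\'e duality turns $i_!\colon H^i(N;\Z)\to H^{i+k}(M;\Z)$ into $i_*\colon H_{n-k-i}(N;\Z)\to H_{n-k-i}(M;\Z)$, which by the same input is an isomorphism for $n-k-i<n-k-l$, i.e.\ for $i>l$, and surjective for $i=l$. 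Composing, cup product with $e$ is an isomorphism followed by a surjection for $l\le i<n-k-l$, hence surjective there, and an injection followed by an injection for $l<i\le n-k-l$, hence injective there. The hypothesis $n-k-2l>0$ ensures $l<n-k-l$, so that both ranges are nonempty.

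The step I expect to be the main nuisance is the passage from the homological connectedness statement to the integral cohomological isomorphism/injectivity statement for $i^*$: one must use the naturality of the universal coefficient sequence (the splitting itself is not natural) and check the $\mathrm{Ext}$ contributions with a small diagram chase. The remaining ingredients — the Thom isomorphism, the projection formula, and Poincar\'e–Lefschetz duality — are formal, but some care is needed to see that the orientability hypotheses on $M$ and $N$ are exactly what make the normal bundle orientable and hence the Thom class and umkehr map available.
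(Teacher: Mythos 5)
Your argument is correct and is essentially the original proof of this result in \cite{Wilking03} (the present paper only cites it without proof): one takes $e$ to be the Poincar\'e dual of $i_*[N]$, factors $x\mapsto x\cup e$ as $i_!\circ i^*$ via the projection formula, and controls $i^*$ and $i_!$ separately using the Hurewicz consequence of $(n-k-l)$-connectedness together with Poincar\'e duality on $N$ and $M$. The degree bookkeeping in your composition step checks out, so nothing further is needed.
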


In particular, in the case where $l = 0$, the integral cohomology of $M$ is $k$-periodic according to the following definition:

\begin{definition}
For a space $M$, a coefficient ring $R$, and a positive integer $c$, we say that $H^*(M;R)$ is $k$-periodic up to degree $c$ if there exists $x\in H^k(M;R)$ such that the map $H^i(M;R)\to H^{i+k}(M;R)$ given by $y\mapsto xy$ is a surjection for $i=0$, an isomorphism for $0 < i < c-k$ and an injection for $i = c-k$.

If, in addition, $M$ is a $c$-dimensional, closed, $R$-orientable manifold, then we say that $H^*(M;R)$ is $k$-periodic.
\end{definition}

In \cite{Kennard12pre}, the action of the Steenrod algebra was exploited to prove the following:

\begin{theorem}[Periodicity Theorem, \cite{Kennard12pre}]\label{THMstrongperiodicity}
Let $N^n$ be a closed, simply connected Riemannian manifold with positive sectional curvature. Let $N_1^{n-k_1}$ and $N_2^{n-k_2}$ be totally geodesic, transversely intersecting submanifolds.
	\begin{enumerate}
	\item If $2k_1 + 2k_2 \leq n$, then the rational cohomology rings of ${N}$, ${N_1}$, ${N_2}$, and ${N_1\cap N_2}$ are $\gcd(4,k_1)$-periodic.
	\item If $3k_1 + k_2 \leq n$, and if $N_2$ is simply connected, then the rational cohomology rings of ${N_2}$ and ${N_1\cap N_2}$ are $\gcd(4,k_1)$-periodic.
	\end{enumerate}
\end{theorem}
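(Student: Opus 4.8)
The plan is to combine three ingredients: connectivity estimates from the Connectedness Theorem, an integral $k_1$-periodicity statement obtained from the cohomology theorem of Wilking quoted above, and a period-reduction argument powered by the action of the Steenrod algebra. Write $N_{12}=N_1\cap N_2$; transversality makes $N_{12}$ a closed submanifold of codimension $k_1+k_2$ in $N$, and the highly connected inclusions below force $N_1$, $N_2$, $N_{12}$ to be simply connected, hence closed orientable manifolds. The Connectedness Theorem gives that $N_i\embedded N$ is $(n-2k_i+1)$-connected, that $N_{12}\embedded N_i$ is $\bigl((n-k_i)-2k_j+1\bigr)$-connected for $\{i,j\}=\{1,2\}$, and that $N_{12}\embedded N_i$ is $(n-k_1-k_2)$-connected when $k_i$ is the larger codimension; so each of the four manifolds admits a highly connected inclusion from a lower-dimensional one, with connectivities controlled by $n$, $k_1$, $k_2$.

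Next, one feeds the appropriate inclusions into Wilking's cohomology theorem with suitably chosen offsets $l$: for each of the four manifolds the inequality $2k_1+2k_2\le n$ (after ordering the codimensions, and using positive curvature) supplies exactly the positivity and range conditions required, producing a candidate periodicity class of degree $k_1$. One then checks that these classes restrict compatibly along the highly connected inclusions and combines the partial periodicity ranges they provide with Poincar\'e duality in each manifold to conclude that $H^*(N;\Q)$, $H^*(N_1;\Q)$, $H^*(N_2;\Q)$, and $H^*(N_{12};\Q)$ are all $k_1$-periodic. For part (2), where one has only $3k_1+k_2\le n$ together with the extra hypothesis that $N_2$ is simply connected, the same outline goes through once simple connectivity is used to improve the connectivity estimate for $N_{12}\embedded N_2$ — the low-degree rational homotopy of a simply connected manifold being controlled by its rational homology — so that Wilking's cohomology theorem applies with a positive $l$ and yields $k_1$-periodicity of $H^*(N_2;\Q)$ and $H^*(N_{12};\Q)$.

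It remains to sharpen the period from $k_1$ to $\gcd(4,k_1)$. If the periodicity class vanishes, then periodicity forces all cohomology below the top dimension to vanish, so the manifolds are rational homology spheres and we are done; so assume the periodicity class $e\in H^{k_1}$ is nonzero. Then $k_1$ is even, since otherwise $e^2=0$ in $H^{2k_1}$ while multiplication by $e$ on $H^{k_1}$ is injective (here $2k_1\le n-k_2$), forcing $e=0$. Write $k_1=2^a m$ with $m$ odd; one must produce a periodicity class of degree $\gcd(4,k_1)=2^{\min(a,2)}$, equivalently show that $e$ is a nonzero scalar multiple of a power of such a class. Reducing modulo each prime $p$, the mod-$p$ cohomology is again $k_1$-periodic, and periodicity forces every Steenrod power operation to satisfy $Sq^i(\bar e)\in\bar e\cdot H^i$ (for $p=2$) and $P^i(\bar e)\in\bar e\cdot H^{2i(p-1)}$ (for odd $p$). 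Feeding these identities into the Adem relations — the mechanism underlying the Hopf invariant one theorem and its odd-primary analogues — together with the classical realizability obstructions for truncated polynomial algebras on a single generator (a generator of degree outside $\{1,2,4\}$ being excluded, the remaining octonionic-plane case being precluded by the numerical hypotheses on the codimensions), one forces $m=1$ and $2^a\le 4$ unless $e$ already factors through a generator of degree dividing $4$; assembling the constraints over all primes then gives the $\gcd(4,k_1)$-periodicity.

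I expect this last step to be the main obstacle: making the Steenrod-algebra argument precise and uniform in $p$, and in particular handling the case where the periodicity has a nontrivial ``fundamental domain'' — some $H^i(-;\Q)\neq 0$ with $0<i<k_1$ — so that the relevant truncated polynomial algebra appears only as a subquotient of the cohomology ring and the realizability obstruction must be applied with care. By comparison, the geometric inputs — the Connectedness Theorem bookkeeping and the Poincar\'e-duality propagation of periodicity — are comparatively routine once the inequalities $2k_1+2k_2\le n$ and $3k_1+k_2\le n$ are tracked.
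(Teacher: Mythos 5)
First, a point of comparison: this paper does not prove the Periodicity Theorem at all — it is imported verbatim from \cite{Kennard12pre} — so there is no in-paper proof to measure your attempt against. Your outline does reproduce the architecture of the proof given in that reference: the connectedness theorem plus Wilking's cohomology theorem yield integral $k_1$-periodicity (the transverse intersection is what supplies an inclusion, namely $N_1\cap N_2\embedded N_2$, for which the offset $l$ in Wilking's cohomology theorem is zero, so that one gets honest periodicity rather than periodicity in a middle range), Poincar\'e duality and the highly connected inclusions propagate this among the four manifolds, and a Steenrod-algebra argument in the spirit of Hopf invariant one reduces the period to $\gcd(4,k_1)$. That first two-thirds of your plan is essentially right.

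The gap is in the third stage, which is the entire content of the theorem and which you yourself flag as not carried out. Two concrete problems with the route you propose. First, ``classical realizability obstructions for truncated polynomial algebras'' are not the right tool: in the periodicity range $H^*(M;\Z_2)$ is generally not a truncated polynomial algebra on $\bar e$ (the fundamental domain $\bigoplus_{0<i<k_1}H^i$ need not vanish), and, more fundamentally, the needed conclusion is not ``no such space exists'' but ``a periodicity element of smaller degree exists'' — a constructive statement that Adams-type nonexistence theorems do not deliver. The argument of \cite{Kennard12pre} instead works directly with the identities $\mathrm{Sq}^i(\bar e)=\bar e\,c_i$ and $\mathrm{Sq}^{k_1}(\bar e)=\bar e^2$, feeds them into the Adem relations, and by a minimal-period induction manufactures a class of degree $1$, $2$, or $4$ inducing periodicity; that computation is the new content and cannot be replaced by a citation. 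Second, your plan to ``assemble the constraints over all primes'' does not recover the rational statement: mod-$p$ periodicity for every $p$, witnessed by unrelated classes at different primes, does not imply rational periodicity. The actual proof uses only $p=2$ and transfers the refined period back to $\Q$ through the single integral class furnished by Wilking's theorem. So the proposal locates the difficulty correctly but leaves the theorem unproved exactly there.
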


We next record two additional results concerning torus actions on positively curved manifolds:

\begin{theorem}[Berger]\label{Berger}
Suppose $T$ is a torus acting by isometries on an compact, positively curved manifold $M^n$. If $n$ is even, then the fixed-point set $M^T$ is nonempty, and if $n$ is odd, then a codimension one subtorus has nonempty fixed-point set.
\end{theorem}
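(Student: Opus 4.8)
The plan is to prove the two cases separately, deducing both from the single fact that a Killing field on an even-dimensional, compact, positively curved manifold must vanish somewhere.

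\emph{Even case.} Since $T$ is a torus, its Lie algebra contains an element $X$ whose one-parameter subgroup $\{\exp(tX)\}$ is dense in $T$ (Kronecker). For such $X$ the zero set $\mathrm{Zero}(X)=\{p: X_p=0\}$ equals $M^T$: one inclusion is clear, and $X_p=0$ forces $\exp(tX)p=p$ for all $t$, hence $hp=p$ for all $h\in\overline{\{\exp(tX)\}}=T$. So it suffices to show $X$ has a zero. Let $p$ minimize $f:=|X|^2$; I claim $f(p)=0$. If not, the first-order condition $\nabla f=-2\nabla_XX=0$ gives $\nabla_{X_p}X=0$, and the second-order condition, using the standard Killing-field identity $\tfrac12\mathrm{Hess}_pf(v,v)=|\nabla_vX|^2-\langle R(v,X)X,v\rangle$, gives $|\nabla_vX|^2\geq\langle R(v,X_p)X_p,v\rangle$; for $0\neq v\perp X_p$ the right-hand side is $\sec(v,X_p)\,|v|^2|X_p|^2>0$, so $\nabla_vX\neq0$. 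Hence the skew-symmetric endomorphism $\nabla_\bullet X|_p$ of $T_pM$ has kernel exactly $\mathbb{R}X_p$, which is one-dimensional; but a skew-symmetric endomorphism of an $n$-dimensional space has even rank, so its kernel has dimension $\equiv n\pmod 2$. With $n$ even this is a contradiction, so $f(p)=0$.

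\emph{Odd case.} I would induct on $r=\dim T$. For $r=1$ a codimension-one subtorus is trivial and fixes everything. For $r\geq2$ the crux is to produce a circle $S^1\subseteq T$ with $M^{S^1}\neq\varnothing$. Granting that, $F:=M^{S^1}$ is a nonempty, totally geodesic (hence positively curved) submanifold whose codimension is even, since the $S^1$-action on a normal space has no trivial summand; so a component $F_0$ of $F$ is again odd-dimensional, the quotient torus $T/S^1\cong T^{r-1}$ acts on $F_0$, the inductive hypothesis gives a subtorus of $T/S^1$ of dimension $r-2$ fixing some $q\in F_0$, and its preimage in $T$ is a codimension-one subtorus fixing $q$. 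To produce the circle, fix a $2$-torus $T^2\subseteq T$ with Lie algebra spanned by commuting Killing fields $X,Y$. If some $Z$ in this Lie algebra has a zero, then $\overline{\exp(\mathbb{R}Z)}$ is a positive-dimensional subtorus fixing that point and hence contains a suitable circle; otherwise $T^2$ acts almost freely, every orbit is $2$-dimensional, and I would seek a contradiction by refining the computation above: at a minimum $p$ of $|X|^2$ the operator $A:=\nabla_\bullet X|_p$ restricts to a skew isomorphism of $X_p^\perp$, while $2$-dimensionality of the orbit forces $\mathrm{Hess}_pf$ to vanish on $\mathrm{span}(X_p,Y_p)$, so equality holds in the curvature inequality at the interior point $Y_p$ of $X_p^\perp$, giving $-A^2Y_p=R(Y_p,X_p)X_p$; feeding in the analogous extremality for other generators in $\mathrm{span}(X,Y)$ should force a contradiction with positive curvature.

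\emph{Expected main obstacle.} The even case is clean once one recalls the Hessian identity for $|X|^2$; its entire content is the parity clash for skew-symmetric operators. The genuine difficulty is the odd case, and within it the assertion that no odd-dimensional positively curved manifold admits an almost-free isometric $T^2$-action — this is the step where positive curvature must be used in an essential and somewhat delicate way, and it is the part I expect to require the most work. Minor bookkeeping along the way includes Kronecker's theorem on dense one-parameter subgroups, the even-codimensionality of circle fixed-point sets, and reducing to the case where $M$ is connected.
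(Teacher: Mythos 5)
This theorem is quoted in the paper as a known classical result (Berger's theorem), so there is no in-paper proof to compare against; I am judging your argument on its own. Your even-dimensional case is correct and complete: the identification of $\mathrm{Zero}(X)$ with $M^T$ for a densely generating $X$, the first- and second-order conditions at a minimum of $|X|^2$, and the parity clash for the skew-symmetric operator $\nabla_\bullet X|_p$ (kernel exactly $\mathbb{R}X_p$, hence odd-dimensional kernel in even ambient dimension) constitute the standard and correct proof.

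The odd-dimensional case, however, has a genuine gap, and it sits exactly where you say it does. Your inductive framework is sound --- even codimension of $M^{S^1}$, the induced $T/S^1$-action on a component, and pulling back a corank-one subtorus are all fine --- but everything hinges on showing that some nonzero element of the Lie algebra of a $2$-torus vanishes somewhere, i.e., that the $T^2$-action is not almost free. The computation you sketch does not close this: at a minimum $p$ of $f=|X|^2$, the vanishing of $\mathrm{Hess}_pf$ on the orbit direction $Y_p$ gives $|AY_p|^2=\langle R(Y_p,X_p)X_p,Y_p\rangle>0$, which merely reasserts $AY_p\neq 0$ and is perfectly consistent with $\ker A=\mathbb{R}X_p$ in odd dimensions; repeating this for other generators of $\mathrm{span}(X,Y)$ produces more identities of the same kind but no visible contradiction. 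The standard way to finish is a different (though related) variational argument: one shows that two commuting Killing fields on a compact positively curved manifold must be linearly dependent at some point --- for instance by minimizing the $T^2$-invariant function $\|X\wedge Y\|^2=|X|^2|Y|^2-\langle X,Y\rangle^2$ and running a Hessian-plus-parity argument there, or equivalently by showing that an almost free action would force a totally geodesic flat $2$-torus orbit at a suitable critical point, contradicting positive curvature. A point of linear dependence gives a nonzero $Z=aX+bY$ with $Z_q=0$, whose closure $\overline{\exp(\mathbb{R}Z)}$ is a positive-dimensional subtorus fixing $q$, which is what your induction needs. Without this lemma (or a substitute), the odd case --- which is the entire content of the theorem beyond Berger's even-dimensional zero theorem --- remains unproved.
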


\begin{theorem}[Maximal symmetry rank, \cite{Grove-Searle94}]\label{Grove-Searle}
If $T$ is a torus acting effectively by isometries on a compact, positively curved manifold $M^n$, then $\dim T \leq {\frac{n+1}{2}}$. Moreover, if equality holds and $M$ is simply connected, then $M$ is diffeomorphic to $S^n$ or $\C P^{n/2}$.
\end{theorem}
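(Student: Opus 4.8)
The plan is to treat the two assertions separately: the inequality $\dim T\le(n+1)/2$ by a direct argument, and the rigidity statement by induction on $n$.

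For the inequality, first suppose $n$ is even. By Berger's theorem (\thm{Berger}) there is a point $p$ fixed by $T$, and the isotropy representation $T\to O(T_pM)\cong O(n)$ is faithful, since a subtorus acting trivially on $T_pM$ would fix a neighborhood of $p$ and hence all of the connected manifold $M$, contradicting effectiveness; as a maximal torus of $O(n)$ has rank $\lfloor n/2\rfloor$, this gives $\dim T\le n/2$. If $n$ is odd, Berger's theorem instead supplies a codimension-one subtorus $T'\subseteq T$ with a fixed point, whose (again faithful) isotropy representation embeds $T'$ into $O(n)$, so $\dim T-1=\dim T'\le\lfloor n/2\rfloor=(n-1)/2$, whence $\dim T\le(n+1)/2$.

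For the rigidity statement I would induct on $n$, checking the cases $n\le 4$ directly (e.g.\ $S^1$ and $S^2=\C P^1$ in dimensions $1$ and $2$). So let $n\ge 5$, let $M$ be simply connected, and let $\dim T=k:=\lfloor(n+1)/2\rfloor$. Pick $p$ fixed by $T$ if $n$ is even and by a codimension-one subtorus if $n$ is odd; maximality of the rank forces this (sub)torus to embed in $SO(n)$ as a maximal torus, so the isotropy representation on $T_pM$ is a sum of two-dimensional weight spaces (together with a trivial line when $n$ is odd) whose weights form a basis $e_1,\dots$ of the character lattice. Let $c_i$ be the circle dual to $e_i$ and let $F$ be the component of $M^{c_i}$ through $p$; it is totally geodesic with tangent space $(T_pM)^{c_i}$, hence of codimension $2$ in $M$. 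The whole torus $T$ preserves $F$ because $c_i$ is central, and an analysis of the isotropy representation at $p$ shows the identity component of the kernel of $T\to\Isom(F)$ is exactly $c_i$ — a subtorus fixing $F$ fixes $p$ and kills every weight $e_j$ with $j\ne i$, so lies in $c_i$. Thus $F$ carries an effective isometric action (after dividing by a finite group) of a torus of dimension $k-1=\lfloor(\dim F+1)/2\rfloor$, the maximal symmetry rank in dimension $n-2$; and since $F\embedded M$ is a codimension-two totally geodesic inclusion it is $(n-3)$-connected by \thm{THMconnectednesstheorem}, so $F$ is simply connected. By the inductive hypothesis, $F$ is diffeomorphic to $S^{n-2}$ or to $\C P^{(n-2)/2}$.

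It then remains to recover $M$ from the circle action of $c_i$ whose fixed-point set contains the codimension-two totally geodesic submanifold $F$. The $(n-3)$-connectedness of $F\embedded M$ together with Poincar\'e duality on $M$ forces $H^*(M;\Z)$ to match that of $S^n$ when $F\cong S^{n-2}$ and that of $\C P^{n/2}$ when $F\cong\C P^{(n-2)/2}$. I would then upgrade this to a diffeomorphism using the equivariant geometry near $F$: the normal bundle of $F$ is a complex line bundle on which $c_i$ acts by rotation, and $M$ is the union of the associated disk bundle with a tubular neighborhood of the remaining fixed-point set, a decomposition one recognizes as the standard one for $S^n$ or for $\C P^{n/2}$. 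I expect this last step to be the main obstacle: the Connectedness Theorem yields only the cohomological recognition, and promoting it to a smooth classification requires the detailed equivariant structure of the action near its codimension-two fixed set, together with care in the base cases $n\le 4$ and in the possibility that $M^{c_i}$ has several components.
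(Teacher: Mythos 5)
This theorem is quoted in the paper from \cite{Grove-Searle94} and is not proved there, so there is no in-paper argument to compare against; I can only assess your sketch on its own terms. The first half of your proposal --- the rank bound via Berger's theorem and the faithfulness of the isotropy representation at a fixed point of $T$ (or of a codimension-one subtorus when $n$ is odd), combined with the rank of a maximal torus of $SO(n)$ --- is complete, correct, and is the standard argument. The inductive setup for the rigidity statement is also the right skeleton: maximality of the rank produces a circle $c_i\subseteq T$ whose fixed-point component $F$ through $p$ is totally geodesic of codimension two, carries an effective action of a torus of maximal rank for its dimension, and is simply connected, so induction identifies $F$.

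The genuine gap is exactly the step you flag at the end, and it is not a technicality --- it is the heart of the Grove--Searle theorem. Knowing that $F\cong S^{n-2}$ or $\C P^{(n-2)/2}$ sits in $M$ as a codimension-two totally geodesic fixed-point component, and that $H^*(M;\Z)$ matches $S^n$ or $\C P^{n/2}$, does not by itself yield a diffeomorphism: one must show that $M$ decomposes as the union of the normal disk bundle of $F$ and a disk bundle over the set $A$ of points at maximal distance from $F$, and then classify $A$ (a point, a single circle orbit, or a second fixed-point component). This decomposition is obtained from convexity and critical point theory for the distance function from $F$ in positive curvature, not from the connectedness theorem, which is a purely homotopy-theoretic conclusion and cannot see the smooth structure. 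Without it, your argument proves at most a cohomological recognition, which is strictly weaker than the stated diffeomorphism classification. A second, smaller issue: the base cases are not a direct check. For $n=4$ the statement with $T^2$ is essentially the Hsiang--Kleiner-type analysis of the orbit space, and for $n=3$ one needs the decomposition of $M$ into two invariant solid tori; both require the same orbit-space and critical-point machinery as the inductive step, so they cannot be dismissed as ``checking directly.''
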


Finally, we use the Griesmer bound from the theory of error-correcting codes to prove the following proposition. The estimates are specifically catered to our application. The proof indicates the general bounds required.
\begin{proposition}\label{Griesmer}
Let $n\geq c\geq 2$. Assume $T^s$ acts effectively by isometries on a positively curved $n$-manifold $N^n$ with fixed point $x$. Let $\delta(n)$ be 0 if $n$ is even and 1 if $n$ is odd.
	\begin{enumerate}
	\item If 
			\[s \geq 2\log_2 n + \frac{c}{2} - 1 - \delta(n),\] then there exists an involution $\sigma\in T^s$ such that the component $N^{\sigma}_x$ of the fixed-point set of $\sigma$ that contains $x$ satisfies $\cod N^\sigma_x \equiv 0\bmod 4$ and $0<\cod N^\sigma_x \leq\frac{n-c}{2}$.
	\item Let $\sigma$ be an involution as above such that $N^\sigma_x$ has minimal positive codimension. If \[s \geq\log_2n + \frac{c}{2} + 1 + \log_2(3) - \delta(n),\] then there exists an involution $\tau\in T^s$ satisfying $N^\tau_x \not\subseteq N^\sigma_x$, $\cod N^\tau_x \equiv 0\bmod{4}$, $\cod N^\sigma_x\cap N^\tau_x \equiv 0\bmod{4}$, and $0<\cod N^\tau_x \leq\frac{n-c}{2}$.
	\end{enumerate}
\end{proposition}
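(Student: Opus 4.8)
The plan is to recast everything in terms of binary linear codes and then apply the Griesmer bound, in the spirit of \cite{Wilking03}.

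First I would record the code arising from the isotropy representation of $T^s$ at $x$. Since an isometry fixing $x$ is determined by its differential there and the action is effective, the real $T^s$-module $T_xN$ splits as $V_1\oplus\cdots\oplus V_m\oplus\R^{\delta(n)}$, where $m=\floor{n/2}$, each $V_i\cong\C$ carries a nontrivial weight $\alpha_i\in\operatorname{Hom}(T^s,S^1)\cong\Z^s$, and the $\alpha_i$ generate $\Z^s$. To an involution $\sigma\in T^s$, i.e. an element $v\in\Z^s/2\Z^s$, I assign the codeword $w(\sigma)=\big(\langle\alpha_1,v\rangle,\dots,\langle\alpha_m,v\rangle\big)\in(\Z/2)^m$; because the $\alpha_i$ generate $\Z^s$ the assignment $\sigma\mapsto w(\sigma)$ is injective, so its image $C\subseteq(\Z/2)^m$ is a binary linear code of length $m$ and dimension $s$. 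The dictionary is: $\sigma$ acts as $-1$ on $V_i$ exactly when the $i$th entry of $w(\sigma)$ is $1$, so $\cod N^\sigma_x=2\,\operatorname{wt}(w(\sigma))$; since $N^\sigma_x$, $N^\tau_x$, and the component through $x$ of $N^\sigma_x\cap N^\tau_x$ are totally geodesic with tangent spaces $(T_xN)^\sigma$, $(T_xN)^\tau$, $(T_xN)^\sigma\cap(T_xN)^\tau$ at $x$, one gets $\cod\big(N^\sigma_x\cap N^\tau_x\big)=2\,|\operatorname{supp}(w(\sigma))\cup\operatorname{supp}(w(\tau))|$, while $N^\tau_x\subseteq N^\sigma_x$ holds if and only if $\operatorname{supp}(w(\sigma))\subseteq\operatorname{supp}(w(\tau))$.

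For part (1) I would pass to the subcode $C_0\subseteq C$ of codewords of even weight, of dimension at least $s-1$ (and nonzero; when $n$ is too small for this the hypothesis on $s$ is vacuous by \thm{Grove-Searle}). Any nonzero $w\in C_0$ has even weight, so the corresponding involution $\sigma$ has $\cod N^\sigma_x=2\operatorname{wt}(w)\equiv 0\bmod 4$; thus I only need a nonzero $w\in C_0$ with $\operatorname{wt}(w)\le\tfrac{n-c}{4}$. This is precisely the Griesmer bound: a binary $[m,k,d]$-code satisfies $m\ge\sum_{i=0}^{k-1}\lceil d/2^i\rceil$, and splitting the sum near $i\approx\log_2 d$ yields $m\ge 2d-2+(k-\floor{\log_2 d})$; substituting $k\ge s-1$ and $d\le m\le n/2$ and solving gives $d\le\tfrac{n-c}{4}$ as soon as $s$ meets the stated bound (with room to spare outside a few small values of $n$). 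The minimum-weight codeword of $C_0$ supplies $\sigma$.

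For part (2) write $a=w(\sigma)$, so $d_0:=\operatorname{wt}(a)$ is the least positive even weight occurring in $C$ — which is what minimality of $\cod N^\sigma_x=2d_0$ means — and set $S=\operatorname{supp}(a)$. Translating the four conditions on $\tau$, it suffices to find a codeword $b$ with $\operatorname{wt}(b)$ even, $|S\cap\operatorname{supp}(b)|$ even, $0<\operatorname{wt}(b)\le\tfrac{n-c}{4}$, and $S\not\subseteq\operatorname{supp}(b)$: then the involution with codeword $b$ is the required $\tau$, with $\cod N^\tau_x=2\operatorname{wt}(b)\equiv 0\bmod 4$ and $\cod(N^\sigma_x\cap N^\tau_x)=2\big(d_0+\operatorname{wt}(b)-|S\cap\operatorname{supp}(b)|\big)\equiv 0\bmod 4$ immediate from the parities. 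To produce $b$, let $C''\subseteq C$ be the subcode of codewords $b$ with $\operatorname{wt}(b)$ and $|S\cap\operatorname{supp}(b)|$ both even (dimension $\ge s-2$); I would apply the Griesmer estimate to a subcode of $C''$ from which $a$ has been removed, say by shortening at one coordinate of $S$, obtaining $b'\in C''$ with $b'\ne 0,a$ and $\operatorname{wt}(b')\le\tfrac{n-c}{4}$ — the $\log_2 3$ in the threshold for (2) reflects the extra room needed to also exclude $a$ while keeping the dimension loss minimal. If $S\not\subseteq\operatorname{supp}(b')$, take $b=b'$. Otherwise $S\subsetneq\operatorname{supp}(b')$, so $\operatorname{wt}(b')\ge d_0+2$; take $b=a+b'$, i.e. $\tau=\sigma\tau'$ with $\tau'$ the involution of $b'$, so that $\operatorname{supp}(b)=\operatorname{supp}(b')\setminus S$ is disjoint from $S$ and $\operatorname{wt}(b)=\operatorname{wt}(b')-d_0\in(0,\tfrac{n-c}{4}]$, the parities being unaffected. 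In each case one then checks $N^\tau_x\not\subseteq N^\sigma_x$ and the stated inequalities directly from the dictionary.

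The main obstacle is the quantitative step: extracting from the Griesmer bound exactly the logarithmic-plus-linear thresholds in the statement. This requires the sharp form of the inequality (the ceilings genuinely contribute), an optimal partition of its terms, and — in (2) — a subcode of $C''$ excluding $a$ that loses as little dimension as possible; the small-$n$ regime, where the estimates are tight or the hypothesis cannot hold, must be dealt with separately via \thm{Grove-Searle}. A secondary technical point is the codimension dictionary itself — that fixed-point-set components through $x$ have tangent space the corresponding fixed subspace, and that the component of $N^\sigma_x\cap N^\tau_x$ through $x$ is the exponential image of the intersection of fixed subspaces — which holds because fixed-point sets of isometries are totally geodesic and hence, near $x$, exponential images of their tangent spaces at $x$.
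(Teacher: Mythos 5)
Your proposal follows essentially the same route as the paper: the isotropy representation at $x$ turns involutions into a binary $[m,s]$-code with $\cod N^\sigma_x = 2\operatorname{wt}(w(\sigma))$, parity functionals cut out subcodes (of codimension $1$ for part (1), and $3$ for part (2) after shortening at a support coordinate of $\sigma$ to force $N^\tau_x\not\subseteq N^\sigma_x$) on which the mod-$4$ conditions hold automatically, and the Griesmer bound produces a low-weight codeword. The only caveat is that your simplified consequence $m\geq 2d-2+(k-\lfloor\log_2 d\rfloor)$ is too lossy in the range $11\leq n\leq 22$ (where Grove--Searle no longer rules out the hypothesis); the sharp, ceiling-preserving estimate you correctly flag as necessary is exactly \lem{lemAlgLem} of the paper, whose proof splits the Griesmer sum at $i=r-\lceil c/2\rceil$ and exploits that $n$, $c$, and $n-c+1$ cannot all be even.
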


By the connectedness theorem, the inclusions $N^\sigma_x\embedded N$ and
	$N^\sigma_x \cap N^\tau_x \embedded N^\tau_x \embedded N$
are $c$-connected. Since $c\geq 2$, this implies that all three submanifolds are $1$-connected. In particular, $N^\sigma_x \cap N^\tau_x = N^{\langle\sigma,\tau\rangle}_x$ where $\langle\sigma,\tau\rangle$ is the subgroup generated by $\sigma$ and $\tau$.

The only part in the proof where we use positive curvature is to conclude that, in the first statement, the bound on $s$ and the assumption $n\geq c\geq 2$ implies $n\geq 11$ by \thm{Grove-Searle}. Given this, the bound on $s$ implies
	\[s > \log_2\ceil{\frac{n-c+1}{2}} + \ceil{\frac{c}{2}} + 1.\]
We also observe that the bound in the second statement implies
	\[s > \log_2\ceil{\frac{n-c+1}{2}} + \ceil{\frac{c}{2}} + 2.\]

We proceed to the proof. We require the following algebraic lemma:

\begin{lemma}\label{lemAlgLem}
If $n\geq c\geq 2$ and $r > \ceil{\frac{c}{2}} + \log_2\ceil{\frac{n-c+1}{2}}$, then \[\floor{\frac{n}{2}} < \sum_{i=0}^{r-1} \ceil{2^{-i-1}\ceil{\frac{n-c+1}{2}}}.\]
\end{lemma}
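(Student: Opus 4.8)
The plan is to set $a := \ceil{\frac{n-c+1}{2}}$ (so $a\geq 1$ since $n\geq c$), reindex the sum via $j=i+1$ as $\sum_{j=1}^{r}\ceil{a/2^{j}}$, and prove the slightly stronger-looking inequality $\sum_{j=1}^{r}\ceil{a/2^{j}}\geq a+\ceil{c/2}$; the conclusion then follows from an elementary comparison. Each summand $\ceil{a/2^{j}}$ is a positive integer, equal to $1$ exactly when $2^{j}\geq a$. So I would let $M$ be the largest $j\geq 1$ with $2^{j}<a$ (take $M=0$ when $a\leq 2$), split the sum at $j=M$, evaluate the tail exactly as $\sum_{j=M+1}^{r}1=r-M$, and bound the head by the geometric series $\sum_{j=1}^{M}\ceil{a/2^{j}}\geq\sum_{j=1}^{M}a/2^{j}=a-a/2^{M}\geq a-2$, using $2^{M}<a\leq 2^{M+1}$.

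Two small refinements make the constants line up. If $a$ is not a power of $2$, then $a/2^{M}\in(1,2)$ is non-integral, so the integer $\sum_{j=1}^{M}\ceil{a/2^{j}}$ is in fact $\geq a-1$; here $M=\ceil{\log_2 a}-1$, and since $r-\ceil{c/2}$ is an integer strictly exceeding the non-integer $\log_2 a$ the hypothesis gives $r\geq\ceil{c/2}+\ceil{\log_2 a}$, whence $\sum_{j=1}^{r}\ceil{a/2^{j}}\geq(a-1)+(r-M)\geq a+\ceil{c/2}$. If $a=2^{\mu}$ with $\mu\geq 1$, the head equals exactly $a-2$ and $M=\mu-1$, but now $\log_2 a=\mu$ is an integer, so the hypothesis forces one extra unit, $r\geq\ceil{c/2}+\mu+1$, and again $\sum_{j=1}^{r}\ceil{a/2^{j}}\geq(a-2)+(r-(\mu-1))\geq a+\ceil{c/2}$. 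The case $a=1$ is immediate, since then every term is $1$ and $r\geq\ceil{c/2}+1$. One also checks $r>M$ throughout, which is clear from $\ceil{c/2}\geq 1$.

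To finish, it remains to note that $a+\ceil{c/2}=\ceil{\frac{n-c+1}{2}}+\ceil{\frac{c}{2}}\geq\ceil{\frac{n+1}{2}}=\floor{\frac{n}{2}}+1>\floor{\frac{n}{2}}$. The first inequality is the superadditivity of $t\mapsto\ceil{t/2}$ on the integers: the left side is an integer that is at least $\tfrac{(n-c+1)+c}{2}=\tfrac{n+1}{2}$, hence at least $\ceil{\frac{n+1}{2}}$. The middle equality $\ceil{\frac{n+1}{2}}=\floor{\frac{n}{2}}+1$ is a routine parity check.

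The crux is the bookkeeping in the second paragraph. Because the conclusion carries a floor on the left while the hypothesis carries only $\log_2$ rather than $\ceil{\log_2}$, the estimate is genuinely tight: the single uniform bound $\ceil{a/2^{j}}\geq\max(a/2^{j},1)$ applied for all $j$ yields only $\sum\geq\floor{\frac{n}{2}}$, not the required strict inequality, and it is precisely the separation into the power-of-$2$ and non-power-of-$2$ cases that recovers the missing unit. Everything else is routine floor/ceiling arithmetic.
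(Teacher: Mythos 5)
Your argument is correct and follows essentially the same route as the paper's: both bound the sum from below by a geometric series on the large summands plus a run of ones on the small ones, and both ultimately rest on the parity identity $\ceil{\frac{n-c+1}{2}}+\ceil{\frac{c}{2}}\geq\floor{\frac{n}{2}}+1$. The paper runs the argument by contradiction, splitting the sum after $r-\ceil{\frac{c}{2}}$ terms and finishing with a single logarithm step that absorbs your power-of-two case distinction; otherwise the two proofs coincide.
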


\begin{proof}[Proof of \lem{lemAlgLem}]
We proceed by contradiction. Suppose the opposite inequality holds. The bounds $n$, $c$, and $r$ imply that $r\geq \ceil{\frac{c}{2}} + 1$ and that $\ceil{\frac{c}{2}}\geq 1$, hence we may split the sum into two pieces and estimate as follows:
	\begin{eqnarray*}
	\floor{\frac{n}{2}}
	  &\geq& \sum_{i=0}^{r-\ceil{\frac{c}{2}}-1}2^{-i-1}\ceil{\frac{n-c+1}{2}}
	        +\sum_{i=r-\ceil{\frac{c}{2}}}^{r-1} 1.
	\end{eqnarray*}
Calculating the geometric sum and rearranging, this implies
	\[\ceil{\frac{n-c+1}{2}}
	  \geq 2^{r-\ceil{\frac{c}{2}}}
		\left(\ceil{\frac{n-c+1}{2}}+\ceil{\frac{c}{2}}-\floor{\frac{n}{2}}\right).\]
Observing that the integers $n$, $c$, and $n-c+1$ cannot all be even, we conclude that the term in parentheses is at least 1, hence taking logarithms yields a contradiction to the assumed bound on $r$.
\end{proof}

We proceed to the proof of \prop{Griesmer}.
\begin{proof}
Choose a basis of $T_x M$ such that the image of $\Z_2^r\subseteq T^r$ under the isotropy representation $T^r \embedded SO(T_x M)$ lies in a copy of $\Z_2^m\subseteq T^m \subseteq SO(T_x M)$ where $m = \floor{\frac{n}{2}}$. Endow $\Z_2^m$ with a $\Z_2$-vector space structure, and consider now the linear embedding $\iota:\Z_2^r \embedded \Z_2^m$.

Consider the first statement. The bound on $s$ and the assumption $n\geq c\geq 2$ imply $s\geq 2$. Consider the composition of $\Z_2^s\to \Z_2^m\to\Z_2$, where the last map takes $\tau=(\tau_1,\ldots,\tau_m)$ to $\sum \tau_i\bmod{2}$. There exists $\Z_2^{s-1}$ inside the kernel, and for each $\sigma\in\Z_2^{s-1}$, $\cod N^\sigma_x$ is twice the even weight of $\iota(\sigma)=\tau$. Hence it suffices to prove the existence of $\sigma\in\Z_2^{s-1}\setminus\{0\}$ with $\cod N^\sigma_x\leq\frac{n-c}{2}$.

Suppose now that no such $\sigma$ exists. Then every $\sigma\in\Z_2^{s-1}\setminus\{0\}$ has $\cod N^\sigma_x \geq \ceil{\frac{n-c+1}{2}}$. Equivalently, the Hamming weight of the image of every $\sigma\in\Z_2^{s-1}\setminus\{0\}$ is at least $\frac{1}{2}\ceil{\frac{n-c+1}{2}}$. We now apply the Griesmer bound from the theory of error-correcting codes:

\begin{nonumbertheorem}[Griesmer bound, \cite{Griesmer60}]
If $\Z_2^r \embedded \Z_2^m$ is an injective linear map such that every element in the image has weight at least $w$, then
	\[m \geq \sum_{i=0}^{r-1} \ceil{\frac{w}{2^i}}.\]
\end{nonumbertheorem}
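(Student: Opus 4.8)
The plan is to establish the bound by induction on $r$, via the classical \emph{residual code} construction. Write $C$ for the image of the given injection, a binary linear code of length $m$ and dimension $r$ in which every nonzero vector has weight at least $w$. The base case $r = 1$ is immediate: $C$ is spanned by a single vector of weight at least $w$, so $m \geq w = \ceil{w/2^0}$.

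For the inductive step, I would pick a codeword $c \in C$ of minimal positive weight $d$; by hypothesis $d \geq w$, and after permuting the $m$ coordinates we may assume $c$ consists of $d$ ones followed by $m - d$ zeros. Let $\pi \colon \Z_2^m \to \Z_2^{m-d}$ be the projection onto the last $m - d$ coordinates and set $C' = \pi(C)$. The two facts to establish are that $\dim C' = r - 1$ and that every nonzero element of $C'$ has weight at least $\ceil{d/2}$. For the dimension: the kernel of $\pi|_C$ consists of the codewords supported on the first $d$ coordinates, and a nonzero such codeword has weight at most $d$, hence exactly $d$ by minimality, hence equals $c$; so $\ker(\pi|_C) = \{0, c\}$ and $\dim C' = r - 1$. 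For the weight bound: given a nonzero $x' \in C'$, lift it to $x \in C$; both $x$ and $x + c$ are nonzero (otherwise $x' = 0$), hence of weight at least $d$, while if $a$ denotes the number of ones of $x$ among the first $d$ coordinates then $\mathrm{wt}(x) = a + \mathrm{wt}(x')$ and $\mathrm{wt}(x + c) = (d - a) + \mathrm{wt}(x')$; adding these gives $2d \leq d + 2\,\mathrm{wt}(x')$, so $\mathrm{wt}(x') \geq d/2$ and hence $\mathrm{wt}(x') \geq \ceil{d/2}$ by integrality.

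Given these two facts, $C'$ is a binary linear code of length $m - d$ and dimension $r - 1$ in which every nonzero vector has weight at least $\ceil{d/2}$, so the inductive hypothesis gives $m - d \geq \sum_{i=0}^{r-2} \ceil{\ceil{d/2}/2^i}$. Using the nested-ceiling identity $\ceil{\ceil{x}/a} = \ceil{x/a}$, valid for every positive integer $a$, the right-hand side equals $\sum_{i=0}^{r-2} \ceil{d/2^{i+1}} = \sum_{j=1}^{r-1} \ceil{d/2^j}$; adding $d = \ceil{d/2^0}$ and invoking $\ceil{d/2^j} \geq \ceil{w/2^j}$ (which follows from $d \geq w$) yields $m \geq \sum_{j=0}^{r-1} \ceil{w/2^j}$, as required. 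I expect the only genuinely substantive points to be the exact dimension count — where minimality of $\mathrm{wt}(c)$ is precisely what pins the kernel down to $\{0, c\}$ — and the averaging estimate giving the residual code's minimum weight; everything else is bookkeeping with ceilings.
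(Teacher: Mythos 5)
Your proof is correct. Note, however, that the paper does not prove this statement at all: it is quoted as a known result with a citation to Griesmer's original 1960 paper, so there is no in-paper argument to compare against. What you have written is the standard residual-code proof --- induct on $r$, pass to the projection away from the support of a minimum-weight codeword $c$, check that the kernel of the projection restricted to the code is exactly $\{0,c\}$ (so the dimension drops by one) and that the averaging identity $\mathrm{wt}(x)+\mathrm{wt}(x+c)=d+2\,\mathrm{wt}(x')$ forces minimum weight at least $\lceil d/2 \rceil$ in the residual code --- and all the steps, including the nested-ceiling manipulation $\lceil \lceil d/2\rceil/2^{i}\rceil=\lceil d/2^{i+1}\rceil$ and the final comparison $d\geq w$, are carried out correctly. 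This is essentially the argument found in the coding-theory literature, so your write-up legitimately fills in the black box that the paper leaves to the reference.
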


This bound implies
	\[\floor{\frac{n}{2}} \geq \sum_{i=0}^{(s-1)-1}\ceil{2^{-i-1}\ceil{\frac{n-c+1}{2}}}.\]
By the comments following the statement of \prop{Griesmer}, we have
	\[s-1 > \log_2\ceil{\frac{n-c+1}{2}} + \ceil{\frac{c}{2}},\]
hence we have a contradiction to \lem{lemAlgLem}, as desired.

We now prove the second statement of \prop{Griesmer}. First, observe that the lower bound on $s$ implies $s\geq 4$. Let $\sigma\in\Z_2^r$ be as in the statement. We define three linear maps $\Z_2^s\to \Z_2$. For the first, fix a component $i$ such that the $i$-th component of $\iota(\sigma)$ is 1 (which corresponds to a normal direction of $N^\sigma_x$), and define the map $\Z_2^s\to\Z_2$ as the projection onto the $i$-th component of $\iota(\tau)$. For the second map, assign $\tau\in\Z_2^s$ to the sum of the components of $\iota(\tau)$ (as we did previously to choose $\sigma$). And for the third map, let $I$ be the subset of indices $i$ where the $i$-th component of $\iota(\sigma)$ is 0, and define the map by assigning $\tau\in\Z_2^s$ to the sum over $i\in I$ of the $i$-th components of $\iota(\tau)$. The intersection of the kernels of these three maps contains a $\Z_2^{s-3}$, and every $\tau\in\Z_2^{s-3}$ satisfies $N^\tau_x\not\subseteq N^\sigma_x$, $\cod N^\tau_x\equiv_4 0$, and $\cod N^{\langle\sigma,\tau\rangle}_x \equiv_4 0$. Moreover, the image of every $\tau\in\Z_2^{s-3}$ has a 0 in the $i$-th component of $\Z_2^m$, hence we have an injective linear map $\Z_2^{s-3} \to \Z_2^{m-1}$ where twice the weight of the image of $\tau\in\Z_2^{r-3}$ is equal to $\cod N^\tau_x$.

It therefore suffices to prove that the element in $\Z_2^{r-3}$ whose image has minimal weight has weight at most $\frac{n-c}{2}$. Proceeding again by contradiction, we assume this is not the case and conclude from the Griesmer bound that
	\[\floor{\frac{n}{2}} - 1 \geq \sum_{i=0}^{(s-3)-1} \ceil{2^{-i-1}\ceil{\frac{n-c+1}{2}}}.\]
The bound on $s$ implies that the $i = s-3$ term in the sum would be 1, hence we may add one to both sides of this inequality to conclude
	\[\floor{\frac{n}{2}} \geq \sum_{i=0}^{(s-2)-1} \ceil{2^{-i-1}\ceil{\frac{n-c+1}{2}}}.\]
As established after the statement of \prop{Griesmer}, the bound on $s$ implies
	\[s-2 > \log_2\ceil{\frac{n-c+1}{2}} + \ceil{\frac{c}{2}},\]
so we have another contrdiction to \lem{lemAlgLem}. This concludes the proof of \prop{Griesmer}.
\end{proof}

\bigskip
\section{Proof of \thm{thmP-intro}}\label{P}
\bigskip

In this section, we use the following notation:

\begin{definition}
For integers $n$, let $\delta(n)$ be 0 if $n$ is even and $1$ if $n$ is odd. And for $c\geq 2$, let
\[f_c(n) = 2\log_2 n + \frac{c}{2} - 1 - \delta(n).\]
\end{definition}
Given an isometric action of an $r$-torus on a closed, positively curved $n$-manifold with $r\geq 2\log_2 n + \frac{c}{2} - 1$, \thm{Berger} implies that a subtorus of dimension $r-\delta(n) \geq f_c(n)$ has a fixed point. Using this, one can conclude \thm{thmP-intro} from the following:
\begin{theorem}\label{thmP}
Let $n\geq c\geq 2$. Assume $M^n$ is a closed, simply connected, positively curved manifold, and assume a torus $T$ acts effectively by isometries with $\dim(T) \geq f_c(n)$. For all $x\in M^T$, there exists $H\subseteq T$ such that $H^*(M^H_x;\Q)$ is $4$-periodic and the inclusion $M^H_x\embedded M$ is $c$-connected.

Moreover, $H$ may be chosen to satisfy $\dim M^H_x\equiv n \bmod{4}$, $\dim M^H_x \geq c$, and the property that every free group action $\pi\times M\to M$ commuting with the action of $T$ restricts to a $\pi$-action on $M^H_x$.
\end{theorem}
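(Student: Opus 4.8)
The plan is to read the symmetry off the isotropy representation of $T$ at the fixed point $x$, translate the situation into the combinatorics of a binary linear code, use \prop{Griesmer} to locate involutions whose fixed-point components have codimension divisible by $4$ and at most $\frac{n-c}{2}$, invoke \thm{THMconnectednesstheorem} to see that such components are $c$-connected in $M$, and invoke \thm{THMstrongperiodicity} to promote a suitable component to one with fully $4$-periodic rational cohomology; an induction on $\dim M$ will handle the case where the codimension is too large for the Periodicity Theorem to apply to a single submanifold. For the set-up: if $\dim T$ attains the Grove--Searle bound $\frac{n+1}{2}$, then $M$ is diffeomorphic to $S^n$ or $\C P^{n/2}$ by \thm{Grove-Searle} and $H=\{1\}$, $N=M$ satisfies every conclusion, so assume otherwise. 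An isometry of the connected manifold $M$ fixing $x$ and acting trivially on $T_xM$ is the identity, so the isotropy representation $T\embedded SO(T_xM)$ is injective; fix a maximal torus $T^m\le SO(T_xM)$ with $m=\floor{n/2}$ through its image, so that the $2$-torsion of $T$ gives an injection $\iota\colon\Z_2^r\embedded\Z_2^m$ with $r=\dim T$. The normal space to $M^G_x$ at $x$, for a subgroup $G\le\Z_2^r$, is spanned by the $2$-planes negated by some element of $G$, so $\cod M^G_x=2\,|\mathrm{supp}\,\iota(G)|$, which is twice the Hamming weight of $\iota(\sigma)$ when $G=\langle\sigma\rangle$.

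By \prop{Griesmer}(1) there is an involution $\sigma\in T$ with $k_1:=\cod M^\sigma_x$ satisfying $0<k_1\le\frac{n-c}{2}$ and $k_1\equiv 0\bmod 4$; fix such a $\sigma$ with $k_1$ minimal. By \thm{THMconnectednesstheorem}(1) the inclusion $M^\sigma_x\embedded M$ is $(n-2k_1+1)$-connected, hence $(c+1)$-connected, so $M^\sigma_x$ is simply connected, $\dim M^\sigma_x=n-k_1\equiv n\bmod 4$, and $\dim M^\sigma_x\ge\frac{n+c}{2}\ge c$. If $3k_1\le n$, then \thm{THMstrongperiodicity}(2), applied with $N_1=M^\sigma_x$ and $N_2=M$, shows that $H^*(M^\sigma_x;\Q)$ is $\gcd(4,k_1)=4$-periodic, and we take $H=\langle\sigma\rangle$, $N=M^\sigma_x$. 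If instead $3k_1>n$ --- so that $k_1\le\frac{n-c}{2}$ forces both $n>3c$ and $k_1>\frac{n}{3}$ --- we recurse. The torus $T$ still acts on $N_1:=M^\sigma_x$, fixing $x$, through the quotient by the kernel $K$ of $T\to\Isom(N_1)$; granting that $K$ is finite (the delicate point, addressed below), the induced effective action has rank $\dim T\ge f_c(n)\ge f_c(\dim N_1)$, the last inequality because $\dim N_1<n$ while $\delta(\dim N_1)=\delta(n)$ as $k_1$ is even. Applying the theorem inductively to $N_1$ with this action and with $x$, and lifting the resulting subgroup of $T/K$ together with $\langle\sigma\rangle$ to a subgroup $H\le T$, the component $N:=M^H_x$ equals the corresponding fixed-point component inside $N_1$; hence $H^*(N;\Q)$ is $4$-periodic, $\dim N\equiv\dim N_1\equiv n\bmod 4$, $\dim N\ge c$, and $N\embedded N_1$ is $c$-connected, so $N\embedded M$ is $c$-connected as well (long exact sequence of the triple $(M,N_1,N)$). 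The recursion terminates because the dimension strictly decreases at each step, with the Grove--Searle special case as base case. Here \prop{Griesmer}(2) enters whenever one wishes to run the periodicity step through \thm{THMstrongperiodicity}(1): it supplies a second totally geodesic submanifold $M^\tau_x$ with $M^\tau_x\not\subseteq M^\sigma_x$ and with both $\cod M^\tau_x$ and $\cod(M^\sigma_x\cap M^\tau_x)$ divisible by $4$, as needed to apply that theorem.

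For the final clause, suppose $\pi\times M\to M$ is a free action commuting with $T$. Since $\pi$ commutes with every element of $H\le T$, it preserves $M^H$ and permutes its components by dimension-preserving diffeomorphisms. Two distinct components of $M^H$ are disjoint totally geodesic submanifolds of the positively curved $M$, so by \thm{THMconnectednesstheorem}(2) their dimensions sum to less than $n$ (otherwise that part of the theorem would assert a nonnegatively-connected inclusion of the empty intersection). Since $\dim N=\dim M^H_x\ge\frac{n+c}{2}>\frac{n}{2}$, the component $N$ is the unique one of maximal dimension, so $\pi$ preserves it and restricts to a still-free $\pi$-action on $N$.

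The main obstacle is the claim, used in the recursion, that the kernel $K$ of $T\to\Isom(M^\sigma_x)$ is finite; without it, the logarithmic symmetry-rank budget $f_c$ cannot absorb the dimension drop $n\mapsto n-k_1$ (the crude estimate $\dim K\le k_1/2$ is far too weak). One must force $\dim K=0$ from the minimality of $k_1$: a positive-dimensional subtorus fixing $M^\sigma_x$ pointwise would, after examining the slice representation at $x$, produce an involution whose image in the code has strictly smaller Hamming weight while still giving a fixed-point component of codimension $\equiv 0\bmod 4$ and at most $\frac{n-c}{2}$ --- contradicting the minimal choice. This kind of rigidity is exactly what the machinery of \cite{Wilking03,Kennard12pre} supplies. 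The only remaining ingredient is the purely combinatorial fact that one always reaches a codimension with $3k_1\le n$ before running out of admissible involutions, which is what the two carefully calibrated bounds of \prop{Griesmer} --- the second roughly $\log_2(n/12)$ smaller than the first --- are designed to guarantee.
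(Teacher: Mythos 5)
Your overall architecture (isotropy representation at $x$, binary codes, \prop{Griesmer}, connectedness theorem, periodicity theorem, induction on dimension) matches the paper's, but the central step is broken. You claim that when $3k_1\leq n$ you can apply \thm{THMstrongperiodicity}(2) with $N_1=M^\sigma_x$ and $N_2=M$ to conclude that $H^*(M^\sigma_x;\Q)$ is $4$-periodic. This degenerate application (one proper submanifold) is not valid: the reason part (2) of the periodicity theorem needs a genuine transverse pair is that the connectedness theorem applied to $N_1\cap N_2\embedded N_2$ yields \emph{full} $k_1$-periodicity of $H^*(N_2)$ (connectivity defect $l=0$), which is what the Steenrod-algebra argument in \cite{Kennard12pre} requires to improve $k_1$-periodicity to $\gcd(4,k_1)$-periodicity. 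A single submanifold $N_1\embedded M$ is only $(n-2k_1+1)$-connected, giving periodicity only in the restricted range $l\leq i<n-k_1-l$ with $l=k_1-1$, which is not enough. If a single codimension-$k_1$ submanifold with $3k_1\leq n$ forced $4$-periodicity, the entire second half of the paper's argument --- the choice of a second involution $\tau$ via \prop{Griesmer}(2), the minimality analysis of \lem{lemIII}, and the three-case proof, each case producing a genuinely transverse pair inside some ambient fixed-point component --- would be unnecessary. That machinery is precisely the content your proposal omits; you mention Griesmer(2) only as something one "wishes" to use, without integrating it into the argument.

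The second gap is the kernel of $T\to\Isom(M^\sigma_x)$, which you correctly flag as the main obstacle but resolve incorrectly. Minimality of $\cod M^\sigma_x$ among involutions with codimension $\equiv 0\bmod 4$ and $\leq\frac{n-c}{2}$ does \emph{not} force the kernel to be finite: a circle fixing $M^\sigma_x$ pointwise produces an involution $\iota$ with $M^\sigma_x\subsetneq M^\iota_x\subsetneq M$, but $\cod M^\iota_x$ need not be divisible by $4$, so no contradiction with minimality arises. The paper only obtains $\dk(M^\sigma_x)\leq 2$ this way (\lem{lemII}), and handles positive-dimensional kernels by a different mechanism: \lem{lemI} shows that if a component $S$ satisfies $\dim S\leq n/2^{\dk(S)/2}$, then the dimension drop pays for the kernel, since $\dim T-\dk(S)\geq f_c(n)-2\log_2 n+2\log_2(\dim S)=f_c(\dim S)$, and the induction hypothesis applies directly to $S$. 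This trade-off between kernel dimension and dimension drop, powered by the $2\log_2 n$ term in $f_c$, is the actual reason the logarithmic bound works, and it is absent from your argument. Your treatment of the final clause (uniqueness of the component of dimension $>n/2$ forces $\pi$ to preserve it) is correct and agrees with the paper.
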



Here, and throughout this section, we use the notation $M^H$ to denote the fixed-point set of $H$, and we write $M^H_x$ for the component of $M^H$ containing $x$.

In the case where $M$ is not simply connected, we consider the universal cover $\tilde{M}$ of $M$. The torus action on $M$ induces an action by a torus $\tilde{T}$ of the same dimension on $\tilde{M}$. Moreover, the fundamental group $\pi = \pi_1(M)$ acts freely on $\tilde{M}$ and its action on $\tilde{M}$ commutes with the action of $\tilde{T}$. \thm{thmP} now implies that there exists a $c$-connected inclusion $N\embedded \tilde{M}$ such that $\pi$ acts (freely) on $N$, $N$ has 4-periodic rational cohomology, and $\dim N \equiv \dim M \bmod{4}$.

In the case where $\dim M \equiv 1 \bmod{4}$, 4-periodicity and Poincar\'e duality imply that $N$ is a simply connected rational homology sphere. This proves the corollary stated in the introduction. Similarly, if $\dim M \equiv 3 \bmod{4}$, the conclusion is that $\pi_1(M)$ acts freely on a simply connected rational homology sphere or a simply connected rational $S^3\times \HH P^{(n-3)/4}$.

We spend the rest of this section on the proof of \thm{thmP}. First observe that $n\geq c\geq 2$ implies
	\[f_c(n) \geq \ceil{\log_2 n} \geq \floor{\frac{n+1}{2}}\] for $2\leq n \leq 5$, hence the theorem is true in low dimensions by \thm{Grove-Searle}. Simply take $H$ to be the identity subgroup of $T$.

Since the result holds in dimensions less than six, we may proceed by induction. We spend the rest of this section proving the induction step. For this purpose, we assume the following:

	\begin{itemize}
	\item $c \geq 2$,
	\item $M$ is a closed, simply connected, positively curved $n$-manifold with $n \geq c$,
	\item $T$ is a torus acting effectively by isometries on $M$ with $\dim(T) \geq f_c(n)$, and
	\item $x$ is a fixed point in $M^T$.
	\end{itemize}

To put what we are trying to prove in a simpler light, we make the following definition:

\begin{definition} Let $c$, $M$, $T$, and $x$ be as above. 
Denote by $\mathcal{C}$ the set of $M^H_x$ where $H\subseteq T$ ranges over subgroups such that
		\begin{itemize}
		\item the inclusion $M^H_x\embedded M$ is $c$-connected,
		\item $\cod M^H_x \equiv 0 \bmod{4}$, 
		\item $\dim M^H_x \geq c$, and
		\item every free group action $\pi\times M \to M$ commuting with the action of $T$ restricts to a $\pi$-action on $M^H_x$.
		\end{itemize}


\end{definition}

Observe that our goal is to prove the following:
\smallskip
\begin{claim}
There exists $M^H_x \in \mathcal{C}$ with $4$-periodic rational cohomology.
\end{claim}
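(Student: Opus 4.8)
The plan is to prove the claim by induction on $n$, as set up in the excerpt, using \prop{Griesmer} to produce suitable involutions and then feeding the resulting totally geodesic submanifolds into the connectedness theorem (\thm{THMconnectednesstheorem}) and the periodicity theorem (\thm{THMstrongperiodicity}). First I would apply part (1) of \prop{Griesmer}, with $s = \dim T \geq f_c(n) = 2\log_2 n + \tfrac{c}{2} - 1 - \delta(n)$, to obtain an involution $\sigma \in T$ with $0 < \cod M^\sigma_x \leq \tfrac{n-c}{2}$ and $\cod M^\sigma_x \equiv 0 \bmod 4$. Write $N = M^\sigma_x$, a closed totally geodesic submanifold through $x$, of dimension $n' = \dim N$ with $n' \equiv n \bmod 4$ (since the codimension is divisible by $4$) and $n' \geq n - \tfrac{n-c}{2} = \tfrac{n+c}{2} \geq c$. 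By part (1) of the connectedness theorem, $N \embedded M$ is $(n - 2\cod N + 1)$-connected, and since $\cod N \leq \tfrac{n-c}{2}$ this inclusion is at least $(c+1)$-connected, in particular $c$-connected; also $N$ is simply connected because $c \geq 2$. The residual torus $T' = T/(\text{ineffective kernel on } N)$ still has large rank acting on $N$: since the kernel of $T$ acting on $N$ is at most one-dimensional (it is generated by $\sigma$ up to finite index, or more carefully one loses at most the circle fixing $N$), $\dim T' \geq \dim T - 1$.

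The key arithmetic step is then to check that $\dim T' \geq f_c(n')$, so that the inductive hypothesis applies to $N$ with the same $c$. This is where the logarithmic form of the bound is essential: we need $\dim T - 1 \geq 2\log_2 n' + \tfrac{c}{2} - 1 - \delta(n')$, and since $n' \geq \tfrac{n+c}{2}$ we have $\log_2 n' \leq \log_2 n - \log_2\tfrac{2n}{n+c} \leq \log_2 n$, while the $-1$ is absorbed because halving the dimension only costs one in $\log_2$. I would carry out this comparison carefully, tracking the parity corrections $\delta(n)$ versus $\delta(n')$ and the fact that $n' < n$ (so the induction is well-founded and the base case $n \leq 5$ has already been handled). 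Applying the inductive hypothesis to $(N, T', x)$ yields $H' \subseteq T'$ with $M^{H'}_x$ — equivalently $N^{H'}_x = M^{H}_x$ for $H$ the preimage of $H'$ in $T$ — having $4$-periodic rational cohomology, $c$-connected inclusion into $N$, $\dim \equiv n' \equiv n \bmod 4$, dimension $\geq c$, and compatibility with commuting free $\pi$-actions. Composing $M^H_x \embedded N \embedded M$ and using that a composition of $c$-connected maps is $c$-connected gives $M^H_x \in \mathcal{C}$ with $4$-periodic rational cohomology, proving the claim.

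I expect the main obstacle to be not the inductive setup above — which is essentially the strategy of \cite{Wilking03} combined with the periodicity theorem — but rather the case analysis needed when the submanifold $N = M^\sigma_x$ produced by part (1) of \prop{Griesmer} has codimension so small (relative to $n$) that the torus $T'$ acting on it is \emph{too} large, i.e. when $\dim T'$ exceeds even the maximal symmetry rank bound $\tfrac{n'+1}{2}$ of \thm{Grove-Searle}, forcing $N$ to be $S^{n'}$ or $\C P^{n'/2}$ directly (both of which have $4$-periodic rational cohomology, so this is actually favorable), versus the genuinely intermediate range where one must instead invoke part (2) of \prop{Griesmer} to find a second involution $\tau$ with $N^\tau_x \not\subseteq N^\sigma_x$ and all the codimension-mod-$4$ conditions, apply part (2) or part (1) of the periodicity theorem (\thm{THMstrongperiodicity}) to the transversely intersecting pair $N^\sigma_x, N^\tau_x$ inside $M$ to directly conclude $\gcd(4, \cdot)$-periodicity of the rational cohomology of $M$ (hence of the relevant submanifold), and reconcile the ``$4$-periodic'' conclusion with the possibility $\gcd(4, k) \in \{1, 2\}$ using that $\cod \equiv 0 \bmod 4$. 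Keeping track of which of these branches the bound on $\dim T$ lands in — and verifying that in every branch one ends up with a member of $\mathcal{C}$ that is $4$-periodic — is the delicate bookkeeping; the comments immediately following \prop{Griesmer} (the inequalities $s > \log_2\ceil{\tfrac{n-c+1}{2}} + \ceil{\tfrac{c}{2}} + 1$ and $+2$) are exactly what is needed to guarantee the relevant involutions exist and that the transversality/periodicity hypotheses $2k_1 + 2k_2 \leq n$ or $3k_1 + k_2 \leq n$ hold.
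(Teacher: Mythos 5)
There is a genuine gap at the step you yourself flag as ``the key arithmetic step.'' Your main route is to set $N=M^\sigma_x$ and apply the induction hypothesis to $N$ with the residual torus $T'$. But the inequality $\dim T' \geq f_c(\dim N)$ does not hold in general. Since $\cod N \leq \frac{n-c}{2}$ and can be as small as $4$, $\dim N$ can be arbitrarily close to $n$, so the gain $f_c(n)-f_c(\dim N) = 2\log_2(n/\dim N)$ can be arbitrarily close to $0$; meanwhile the loss $\dk(N)=\dim\ker(T|_N)$ need not be $0$. Your parenthetical justification that the kernel is ``generated by $\sigma$ up to finite index'' is not right: $\sigma$ is a finite element and contributes nothing to the dimension of the kernel, whose identity component can a priori be a torus of positive dimension (the paper only secures $\dk(M^\sigma_x)\leq 2$, and only after choosing $\sigma$ of minimal codimension). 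Even a loss of $1$ already defeats the induction when $\dim N > n/\sqrt{2}$, which is automatic here since $\dim N \geq \frac{n+c}{2}$. This is exactly why the paper's Lemma 2.7 (the consequence of the induction hypothesis) is stated as a dichotomy: either the induction applies to some $S\in\mathcal{C}$ with $\dim S \leq n/2^{\dk(S)/2}$ and we are done, or one gets dimension lower bounds $\dim Q > n/2^{\dk(Q)/2}$ that are then used as \emph{hypotheses} in the rest of the argument. The induction hypothesis is never applied to $M^\sigma_x$ itself in the way you propose.

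You do correctly anticipate that the fallback is a second involution $\tau$ from part (2) of Proposition 1.6 together with the periodicity theorem, but you treat this as a peripheral ``case analysis'' when it is in fact the entire content of the proof: the transversality hypothesis $2k_1+2k_2\leq n$ (or $3k_1+k_2\leq n$) is not automatic for the pair $N^\sigma_x, N^\tau_x$, and when it fails the paper must pass to a minimal pair $(N,\sigma)$ satisfying Property $\mathcal{I}$, extract a third involution $\iota$ from the kernel torus, and run three separate cases (keyed to the quantity $2\dk(N)-\dk(N^\tau_x)$ and to transversality of $N^\sigma_x\cap N^\tau_x$) in which the periodicity theorem is applied to various auxiliary transverse intersections inside $N^{\sigma\tau}_x$, $N^\tau_x$, or $N^{\sigma\tau\iota}_x$. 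None of that is reconstructed in your proposal, so as written the argument does not close. (A minor point in your favor: the worry about $\gcd(4,k)\in\{1,2\}$ is moot, since all relevant codimensions are arranged to be $\equiv 0 \bmod 4$, so the periodicity theorem always yields $4$-periodicity directly.)
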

\smallskip
Our first step is to draw a conclusion from our induction hypothesis. To state the lemma, we require one more definition:

\begin{definition}
For a submanifold $N\subseteq M$ on which $T$ acts, let $\ker(T|_N)\subseteq T$ denote the kernel of the induced $T$-action on $N$. Also let $\dk(N) = \dim \ker(T|_N)$, that is, the dimension of the kernel of the induced $T$-action on $N$.
\end{definition}

Since $M$ and $T$ are fixed, the quantity $\dk(N)$ is well defined. 
We now put our induction hypothesis to use:

\begin{lemma}\label{lemI}
Some $M^H_x \in \mathcal{C}$ has $4$-periodic rational cohomology, or the following holds: For all $Q,N\in\mathcal{C}$ with $Q\subseteq N\subseteq M$ and $\dim Q < n$,
	\begin{enumerate}
	\item $\dim Q > n/2^{\dk(Q)/2}$ and
	\item if $k < n/(3\cdot 2^{\dk(N)})$, then
	\[\dim Q > \left\{\begin{array}{rcl}
			 k	&\mathrm{if}&	2\dk(N) - \dk(Q) \geq -3\\
	\frac{3}{2}k	&\mathrm{if}&	2\dk(N) - \dk(Q) \geq -2\\
			2k	&\mathrm{if}&	2\dk(N) - \dk(Q) \geq -1\\
			3k	&\mathrm{if}&	2\dk(N) - \dk(Q) \geq 0.
			\end{array}\right.\]
	\end{enumerate}
\end{lemma}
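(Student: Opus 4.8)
The plan is to prove \lem{lemI} as the place where the induction hypothesis underlying \thm{thmP} gets used. So I assume \thm{thmP} holds for all closed, simply connected, positively curved manifolds of dimension $n'$ with $c\le n'<n$ (the constant $c$ being fixed), I suppose the first alternative of \lem{lemI} fails — no $M^H_x\in\mathcal C$ has $4$-periodic rational cohomology — and I fix an arbitrary $Q=M^{H_Q}_x\in\mathcal C$ with $\dim Q<n$. The single estimate I aim for is $\dim Q>n\cdot 2^{-\dk(Q)/2}$; statements (1) and (2) both follow from it by elementary arithmetic.

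The heart of the argument is that the induction hypothesis cannot be applied to $Q$. The manifold $Q$ is closed and positively curved (it is totally geodesic in $M$), simply connected (it is $c$-connected in the simply connected $M$ and $c\ge 2$), and has $\dim Q\ge c$ by membership in $\mathcal C$; moreover the induced action of the torus $\bar T:=T/\ker(T|_Q)$ on $Q$ is effective of dimension $\dim T-\dk(Q)$, and fixes $x$. Hence if $\dim T-\dk(Q)\ge f_c(\dim Q)$, then \thm{thmP} applies to $(Q,\bar T)$ at $x$ and produces $\bar H\subseteq\bar T$ such that $Q^{\bar H}_x$ has $4$-periodic rational cohomology, the inclusion $Q^{\bar H}_x\embedded Q$ is $c$-connected, $\dim Q^{\bar H}_x\equiv\dim Q\bmod 4$ and $\dim Q^{\bar H}_x\ge c$, and commuting free actions restrict. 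Pulling $\bar H$ back to $H'\subseteq T$ we get $M^{H'}_x=Q^{\bar H}_x$, and I would check that $M^{H'}_x\in\mathcal C$: the inclusion $M^{H'}_x\embedded M$ is $c$-connected since it factors as $M^{H'}_x\embedded Q\embedded M$ with both maps $c$-connected; $\cod_M M^{H'}_x=\cod_M Q+\cod_Q M^{H'}_x\equiv 0\bmod 4$ because both summands are; $\dim M^{H'}_x\ge c$; and a free action of $\pi$ on $M$ commuting with $T$ restricts first to $Q$ (as $Q\in\mathcal C$) and then, still commuting with $\bar T$, to $Q^{\bar H}_x$ (by \thm{thmP}). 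So $M^{H'}_x\in\mathcal C$ would have $4$-periodic rational cohomology, contradicting our assumption. Therefore $\dim T-\dk(Q)<f_c(\dim Q)$.

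To turn this into the estimate, I use that $Q\in\mathcal C$ has $\cod_M Q\equiv 0\bmod 4$, hence $\dim Q\equiv n\bmod 2$, hence $\delta(\dim Q)=\delta(n)$, and therefore $f_c(n)-f_c(\dim Q)=2\log_2(n/\dim Q)$. Combining with $\dim T\ge f_c(n)$ and the inequality just obtained,
\[2\log_2(n/\dim Q)=f_c(n)-f_c(\dim Q)\le\dim T-f_c(\dim Q)<\dk(Q),\]
so $\dim Q>n\cdot 2^{-\dk(Q)/2}$, which is statement (1) (and it forces $\dk(Q)\ge1$, since $\dim Q<n$). For statement (2), the hypothesis $k<n/(3\cdot 2^{\dk(N)})$ gives $n>3k\cdot 2^{\dk(N)}$, so substituting, $\dim Q>3k\cdot 2^{(2\dk(N)-\dk(Q))/2}$; since $3>2\sqrt2$, this yields $\dim Q>3k,\ 2k,\ \frac{3}{2}k,\ k$ according as $2\dk(N)-\dk(Q)$ is $\ge0,\ -1,\ -2,\ -3$, which is exactly the asserted piecewise bound. (The containment $Q\subseteq N$ is not needed in this argument.)

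The only genuine work is the verification in the second paragraph that the fixed-point component $Q^{\bar H}_x$ produced by the induction hypothesis, viewed as $M^{H'}_x$ inside $M$, really does satisfy all four defining conditions of $\mathcal C$ — in particular that the "commuting free actions restrict" clause composes correctly through the tower $M^{H'}_x\subseteq Q\subseteq M$ — together with the small but essential parity remark that $\cod_M Q\equiv0\bmod4$ makes the $\delta$-corrections in $f_c$ cancel, which is precisely what prevents the loss of a factor $\sqrt2$ in the final inequality. I do not anticipate any other serious obstacle.
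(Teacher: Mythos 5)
Your proposal is correct and follows essentially the same route as the paper: apply the induction hypothesis (Theorem \ref{thmP} in lower dimensions) to any $Q\in\mathcal{C}$ violating the estimate, check that the resulting fixed-point component lies in $\mathcal{C}$ and is $4$-periodic, and note that $\cod Q\equiv 0\bmod 4$ makes the $\delta$-terms in $f_c$ cancel so that the dimension bound $\dim T-\dk(Q)<f_c(\dim Q)$ translates into $\dim Q>n/2^{\dk(Q)/2}$, from which part (2) is elementary arithmetic. The only difference is presentational (you argue by contradiction from the failure of the first alternative, the paper argues directly from the existence of a bad $S$), and your verification of membership in $\mathcal{C}$ is slightly more detailed than the paper's.
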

\begin{proof}
Suppose for a moment that there exists $S\in\mathcal{C}$ such that $\dim S < n$ and $\dim S \leq n/2^{\dk(S)/2}$. Then $T/\ker(T|_S)$ is a torus acting effectively on $S$ with dimension
	\[\dim(T) - \dk(S) \geq f_c(n) - 2\log_2\of{n} + 2\log_2\of{\dim S} = f_c(\dim S).\]
Since $\dim S < n$, the induction hypothesis implies the existence of an $H'\subseteq T/\ker(T|_S)$ such that
	\begin{itemize}
	\item $S^{H'}_x$ has 4-periodic rational cohomology,
	\item $S^{H'}_x\embedded S$ is $c$-connected,
	\item $\dim S^{H'}_x \equiv \dim S\bmod{4}$,
	\item $\dim S^{H'}_x \geq c$, and
	\item every free group action $\pi\times S \to S$ that commutes with the action of $T/\ker(T|_S)$ restricts to a $\pi$-action on $S^{H'}_x$.
	\end{itemize}
Letting $H$ be the inverse image of $H'$ under the quotient map $T\to T/\ker(T|_S)$, we conclude that $S^{H'}_x = M^H_x$. Moreover, since $S \in \mathcal{C}$, we have $M^H_x\in\mathcal{C}$. Hence $M^H_x \in\mathcal{C}$ and has 4-periodic rational cohomology.

We may assume therefore that no such $S$ exists. Letting $Q$ and $N$ be as in the assumption of the lemma, we immediately obtain the estimate $\dim Q > n/2^{\dk(Q)/2}$. The second estimate on $\dim Q$ follows directly from the first together with the estimate on $k$.
\end{proof}

Since our goal is to prove that some $M^H_x\in\mathcal{C}$ has 4-periodic rational cohomology, we assume from now on the second statement of \lem{lemI}.

We next begin the study of fixed-point sets of involutions. Using \prop{Griesmer}, we prove the following:

\begin{lemma}\label{lemII}
There exists an involution $\sigma\in T$ such that $M^\sigma_x\in\mathcal{C}$, $\dk(M^\sigma_x) \leq 2$, and $0 < \cod M^\sigma_x \leq \frac{n-c}{2}$.
\end{lemma}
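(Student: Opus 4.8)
Proof proposal for \lem{lemII}.

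The plan is to produce $\sigma$ from \prop{Griesmer}(1), applied to the $T$-action on $M$ itself, but to single out a good involution among the available ones so that the kernel condition is also met. Since $\dim T\ge f_c(n)=2\log_2 n+\frac c2-1-\delta(n)$, \prop{Griesmer}(1) shows that the set
\[\mathcal A=\{\rho\in T\st \rho^2=\id,\ \cod M^\rho_x>0,\ \cod M^\rho_x\equiv 0\bmod 4\}\]
is nonempty and contains an element of codimension at most $\frac{n-c}{2}$. I would let $\sigma\in\mathcal A$ be one of \emph{minimal} codimension and set $k=\cod M^\sigma_x$; then automatically $4\mid k$ and $0<k\le\frac{n-c}{2}$. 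The remaining membership conditions for $\mathcal C$ are then routine: $\dim M^\sigma_x=n-k\ge\frac{n+c}2\ge c$, and by \thm{THMconnectednesstheorem}(1) the inclusion $M^\sigma_x\embedded M$ is $(n-2k+1)$-connected, hence $c$-connected because $n-2k+1\ge c+1$.

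The one geometric point among the $\mathcal C$-conditions is invariance under free group actions. Given a free action $\pi\times M\to M$ commuting with $T$, each $g\in\pi$ commutes with $\sigma$, hence carries $M^\sigma$ to itself and the component $M^\sigma_x$ to $M^\sigma_{gx}$; the latter is again a component of the fixed-point set of the isometry $\sigma$, so it is totally geodesic of dimension $n-k$. Since $2(n-k)\ge n+c>n$, two disjoint totally geodesic submanifolds of these dimensions cannot coexist in the positively curved $M$ (Frankel's theorem; equivalently \thm{THMconnectednesstheorem}(2) with $k_1=k_2=k$ would make the intersection $(n-2k)$-connected with $n-2k\ge c\ge 2>0$, which is impossible if it is empty). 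Hence $M^\sigma_{gx}=M^\sigma_x$, and $\pi$ restricts to a (still free) action on $M^\sigma_x$. This finishes $M^\sigma_x\in\mathcal C$.

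The substantive step, which I expect to be the main obstacle, is $\dk(M^\sigma_x)\le 2$; here I would argue by contradiction using the minimality of $k$ together with a linear-algebra-over-$\Z_2$ argument in the spirit of \Sec{Preliminaries}. Write $K=\ker(T|_{M^\sigma_x})^0$ and suppose $d:=\dim K=\dk(M^\sigma_x)\ge 3$. As $K$ fixes $M^\sigma_x$ pointwise it acts on the normal space $\nu=(T_xM^\sigma_x)^{\perp}\subseteq T_xM$, and this action is faithful by effectiveness (an element acting trivially on $\nu$ acts trivially on $T_xM$, hence, by the slice theorem, trivially on $M$). Diagonalizing, $\nu\cong\bigoplus_{j=1}^{k/2}W_j$ with $K$ acting on the plane $W_j$ through a character $\psi_j$ and with $(\psi_j)_j\colon K\to (S^1)^{k/2}$ injective, while $\sigma$ acts on $\nu$ as $-\id$. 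The elementary abelian $2$-group $G=\langle\sigma, K[2]\rangle\le T$ then acts on $\nu$ preserving this decomposition, each of its elements acting as $\pm\id$ on each $W_j$, so recording these signs defines a homomorphism $\Phi\colon G\to\Z_2^{k/2}$ whose restriction to $K[2]$ is injective (hence $\dim\Phi(G)\ge d$) and with $\Phi(\sigma)=\mathbf 1$. For $\rho\in G\setminus\{\id\}$, inspecting the $\rho$-fixed subspace of $T_xM$ shows that $\rho$ fixes $M^\sigma_x$ pointwise and $\cod M^\rho_x=2\,w(\Phi(\rho))$, where $w$ denotes Hamming weight. Minimality of $k$ now enters: if $w(\Phi(\rho))$ is even then $\cod M^\rho_x\equiv 0\bmod 4$, so $\rho\in\mathcal A$ and $2\,w(\Phi(\rho))\ge k$, forcing $w(\Phi(\rho))=\frac k2$ and $\Phi(\rho)=\mathbf 1$. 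Thus the even-weight subcode of $\Phi(G)$ equals $\{0,\mathbf 1\}$, which has codimension at most $1$ in $\Phi(G)$; hence $d\le\dim\Phi(G)\le 2$, a contradiction. The delicate points in this last step are keeping track of the codeword $\mathbf 1$ contributed by $\sigma$ when $\sigma\notin K$ (so that one really works inside a single elementary abelian subgroup of $T$), and imposing minimality on exactly the set $\mathcal A$ of involutions of codimension divisible by $4$, which is what makes the weight-parity dichotomy line up with a codimension-one subcode.
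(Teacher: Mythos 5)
Your proof is correct and follows essentially the same route as the paper: invoke \prop{Griesmer}(1), take $\sigma$ of minimal positive codimension among involutions with $\cod M^\sigma_x\equiv 0\bmod 4$, and verify membership in $\mathcal{C}$ via the connectedness theorem and a Frankel-type uniqueness argument for the $\pi$-invariance. The only difference is that you spell out the isotropy-representation/weight argument behind ``minimality forces $\dk(M^\sigma_x)\leq 2$,'' which the paper asserts in a single sentence; your version of that step is sound.
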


\begin{proof}
Recall that $x\in M$ has been fixed. Also recall that $\dim(T)\geq f_c(n)$. By the first part of \prop{Griesmer}, there exists $\sigma\in T$ satisfying $\cod M^{\sigma}_x \equiv 0\bmod{4}$ and $0< \cod M^{\sigma}_x \leq \frac{n - c}{2}$. By choosing $\sigma$ among all such involutions so that $\cod M^{\sigma}_x$ is minimal, we ensure that $\dk(M^{\sigma}_x) \leq 2$. It remains to show that $M^\sigma_x = M^{\langle\sigma\rangle}_x\in\mathcal{C}$.

First, the bound on $\cod M^\sigma_x$ and the connectedness theorem implies that $M^\sigma_x\embedded M$ is $c$-connected. Second, our choice of $\sigma$ implies $\dim M^\sigma_x \equiv \dim M \bmod{4}$. Third, the bound $\cod M^\sigma_x\leq \frac{n - c}{2}$ and the fact that $n\geq c$ imply that $\dim M^\sigma_x \geq \frac{n+c}{2} \geq c$. Finally, the assumption that $c\geq 2$ implies that $\dim M^\sigma_x \geq \frac{n}{2}$. By the connectedness theorem, $M^\sigma_x$ is the unique component of $M^\sigma$ with dimension at least $n/2$, which implies that every $\pi$-action on $M$ that commutes with $T$ preserves $M^\sigma_x$ and hence restricts to a $\pi$-action on $M^\sigma_x$. These conclusions imply $M^\sigma_x\in\mathcal{C}$.
\end{proof}

Using the periodicity theorem, we can sharpen the conclusion of \lem{lemII} to the following:

\begin{lemma}\label{lemII-improvement}
Some $M^H_x\in\mathcal{C}$ has $4$-periodic rational cohomology, or there exists an involution $\sigma\in T$ such that $M^\sigma_x\in\mathcal{C}$ and $0<\cod M^\sigma_x \leq\min\left(\frac{n-c}{2}, \frac{n}{3}\right)$.
\end{lemma}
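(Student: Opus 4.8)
The strategy is to start from the involution $\sigma$ produced by \lem{lemII}, which already satisfies $M^\sigma_x\in\mathcal{C}$ and $0<\cod M^\sigma_x\leq\frac{n-c}{2}$, and to argue that \emph{either} this $\sigma$ already has $\cod M^\sigma_x\leq\frac{n}{3}$ (in which case we are done), \emph{or} the existing bound $\cod M^\sigma_x\leq\frac{n-c}{2}$ is enough, together with the periodicity theorem, to force $M^\sigma_x$ itself to have $4$-periodic rational cohomology. In other words, I would assume $\frac{n}{3}<\cod M^\sigma_x\leq\frac{n-c}{2}$ and derive that $H^*(M^\sigma_x;\Q)$ is $4$-periodic, which gives the first alternative of the conclusion.

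So suppose $k_1:=\cod M^\sigma_x$ satisfies $\frac{n}{3}<k_1\leq\frac{n-c}{2}$. The idea is to invoke the second part of \prop{Griesmer}: since $\dim(T)\geq f_c(n)\geq\log_2 n+\frac{c}{2}+1+\log_2 3-\delta(n)$ (this inequality should be checked directly from the definition of $f_c$, using $2\log_2 n \geq \log_2 n + \log_2 n$ and $n$ large — note $n\geq 6$ in the induction step, and in fact one gets $n\ge 11$ from \thm{Grove-Searle} as in the remarks after \prop{Griesmer}), there is a second involution $\tau\in T$ with $M^\tau_x\not\subseteq M^\sigma_x$, with $\cod M^\tau_x\equiv 0\bmod 4$, with $\cod(M^\sigma_x\cap M^\tau_x)\equiv 0\bmod 4$, and with $0<\cod M^\tau_x\leq\frac{n-c}{2}$. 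Write $k_2:=\cod M^\tau_x$. Then $M^\sigma_x$ and $M^\tau_x$ are totally geodesic and, by the remarks after \prop{Griesmer}, they intersect transversely (since the three relevant submanifolds are $1$-connected and $M^\sigma_x\cap M^\tau_x=M^{\langle\sigma,\tau\rangle}_x$ with the expected dimension). Now I want to apply \thm{THMstrongperiodicity}(1) with $N=M$ and the pair $(N_1,N_2)$ chosen from $\{M^\sigma_x,M^\tau_x\}$ so that the smaller codimension plays the role of $k_1$: the hypothesis needed is $2k_1+2k_2\leq n$, i.e.\ $k_1+k_2\leq\frac{n}{2}$. Since $\sigma$ was chosen in \lem{lemII} to have \emph{minimal} positive codimension among involutions with codimension divisible by $4$ and at most $\frac{n-c}{2}$, and since $\tau$ has this same property of having codimension in $(0,\frac{n-c}{2}]$ divisible by $4$, we get $k_1=\cod M^\sigma_x\leq k_2$, hence $k_1+k_2\leq 2k_2\leq n-c\leq n$ — but I need the sharper $k_1+k_2\le\frac n2$. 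Here is where the assumption $\frac n3<k_1$ must be leveraged: if $k_1>\frac n3$ then in particular $M^\sigma_x$ has dimension $<\frac{2n}{3}$, and one should use \lem{lemI}(1) (the $\dim Q> n/2^{\dk(Q)/2}$ estimate) together with $\dk(M^\sigma_x)\leq 2$ from \lem{lemII} to see that $k_1$ cannot actually be that large — or, more precisely, to get a contradiction showing $k_1\le\frac n3$ outright, unless $M^\sigma_x$ already has $4$-periodic cohomology via the induction hypothesis baked into \lem{lemI}.

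Let me reststructure: the cleanest route is to \emph{not} introduce $\tau$ at all when $k_1\le\frac n3$, and when $k_1>\frac n3$, to note that $\dk(M^\sigma_x)\le 2$ forces, via \lem{lemI}(1), $\dim M^\sigma_x> n/2^{\dk(M^\sigma_x)/2}\ge n/2$, which only gives $k_1<\frac n2$ — not quite $\frac n3$. To close the gap I would instead apply \thm{THMstrongperiodicity}(2): with $N_1=M^\sigma_x$ (codimension $k_1$), $N_2=M^\tau_x$ (codimension $k_2\ge k_1$), the hypothesis is $3k_1+k_2\le n$; since $k_2\le\frac{n-c}{2}$ and $k_1\le\frac{n-c}{2}$ this would need $3k_1\le n-k_2$, and combined with $k_1>\frac n3$ we'd need $k_2<0$, a contradiction. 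Hence under the standing assumption $k_1>\frac n3$ no such $\tau$ with $k_2\ge k_1$ can exist — but \prop{Griesmer}(2) guarantees one does exist (with $\cod M^\tau_x\le\frac{n-c}{2}$, and by minimality of $k_1$ among codimension-$4$-divisible involutions we do have $k_2\ge k_1$). This contradiction shows $k_1\le\frac n3$ after all, completing the proof — provided we first dispense with the case where the induction-hypothesis alternative of \lem{lemI} fires, which is exactly the "some $M^H_x\in\mathcal C$ has $4$-periodic rational cohomology" escape clause in the statement.

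The main obstacle is bookkeeping the divisibility-by-$4$ and minimality constraints carefully enough that $\tau$ from \prop{Griesmer}(2) genuinely satisfies $\cod M^\tau_x\ge\cod M^\sigma_x$, so that the labels $k_1\le k_2$ in the periodicity theorem are correct; and verifying that the numerical bound $\dim(T)\ge f_c(n)$ implies the stronger bound $\log_2 n+\frac c2+1+\log_2 3-\delta(n)$ needed to invoke \prop{Griesmer}(2), which should follow from $\log_2 n\ge 1+\log_2 3$, i.e.\ $n\ge 6$, valid throughout the induction step. One also has to confirm transversality of $M^\sigma_x$ and $M^\tau_x$ — this is handled by the remarks immediately following \prop{Griesmer}, since $\langle\sigma,\tau\rangle\cong\Z_2^2$ and the fixed-point-set codimensions add.
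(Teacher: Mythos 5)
There is a genuine gap, and it sits at the final step. You assume $k_1=\cod M^\sigma_x>\frac n3$, invoke Proposition \ref{Griesmer}(2) to produce $\tau$ with $k_2=\cod M^\tau_x\ge k_1$, observe that the hypothesis $3k_1+k_2\le n$ of Theorem \ref{THMstrongperiodicity}(2) then fails, and declare a contradiction with the existence of $\tau$. But the periodicity theorem is a one-way implication: the failure of its hypothesis says nothing about whether the configuration $(M^\sigma_x,M^\tau_x)$ can exist, so no contradiction is obtained and the case $\frac n3<k_1\le\frac{n-c}2$ remains open. The underlying difficulty is real: when $k_1$ is close to $\frac n2$, \emph{any} argument that intersects $M^\sigma_x$ with a second fixed-point set $M^\tau_x$ of positive codimension will have codimension sums too large for either part of the periodicity theorem, so this route cannot be repaired by better bookkeeping. (A secondary slip: deducing $f_c(n)\ge\log_2 n+\frac c2+1+\log_2 3-\delta(n)$ requires $\log_2 n\ge 2+\log_2 3$, i.e.\ $n\ge 12$, not $n\ge 6$.)

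The paper's proof avoids this by splitting on $\dk(M^\sigma_x)$ rather than on the size of $k_1$, and by decomposing $M^\sigma_x$ \emph{from above} instead of cutting it down further. If $\dk(M^\sigma_x)\le 1$, Lemma \ref{lemI}(1) gives $\dim M^\sigma_x>n/2^{1/2}$, hence $k_1<n\left(1-\tfrac1{\sqrt2}\right)<\tfrac n3$ directly --- note that your estimate $k_1<\tfrac n2$ came from using the weaker exponent $\dk\le 2$; the sub-case $\dk\le1$ already closes the gap. If instead $\dk(M^\sigma_x)=2$, a $2$-torus fixes $M^\sigma_x$, so there is an involution $\iota$ with $M^\sigma_x\subsetneq M^\iota_x\subsetneq M$, and $M^\sigma_x$ is the transverse intersection of $M^\iota_x$ and $M^{\iota\sigma}_x$, whose codimensions sum to $k_1\le\frac{n-c}2<\frac n2$. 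Then $2\cod M^\iota_x+2\cod M^{\iota\sigma}_x=2k_1<n$, and Theorem \ref{THMstrongperiodicity}(1) yields that $H^*(M;\Q)$ itself is $4$-periodic, landing in the first alternative of the lemma since $M\in\mathcal C$. No appeal to Proposition \ref{Griesmer}(2) is needed at this stage.
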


\begin{proof}
Choose $\sigma\in T$ as in the previous lemma. If, in fact, $\dk(M^\sigma_x) \leq 1$, then \lem{lemI} implies
	\[\cod M^\sigma_x < \left(n-\frac{n}{\sqrt{2}}\right)
		<\frac{n}{3}.\]
Otherwise, there exists a 2-torus inside $T$ which fixes $M^\sigma_x$. It follows that there exists another involution $\iota\in T$ such that $M^{\sigma}_x \subseteq M^\iota_x \subseteq M$ with both inclusions strict. Since $\iota$ and $\sigma$ are involutions in $T$, it follows that $M^{\sigma}_x$ is the transverse intersection of $M^\iota_x$ and $M^{\iota\sigma}_x$. Since
	\[2\cod M^\iota_x + 2\cod M^{\iota\sigma}_x 
		= 2\cod M^{\sigma}_x < n,\]
the periodicity theorem implies that $H^*(M;\Q)$ is 4-periodic. Since $M = M^{\langle\id\rangle}_x \in \mathcal{C}$, this concludes the proof.
\end{proof}

Since our goal is to prove that some $M^H_x \in \mathcal{C}$ has 4-periodic rational cohomology, this lemma allows us to assume without loss of generality that $(M,\sigma)$ satisfies \propI according to the following definition:


\begin{definition}
We say that $(N, \sigma)$ satisfies \propI if $N\in\mathcal{C}$ and $\sigma$ is an involution in $T/\ker(T|_N)$ such that $N^\sigma_x \in \mathcal{C}$ and
	\[0 < \cod_N N^\sigma_x 
		\leq \min\of{\frac{\dim N - c}{2},
		             \frac{n}{3\cdot 2^{\dk(N)}}}.\]
\end{definition}

Here and throughout the rest of the proof, $\cod_R Q = \cod Q - \cod R$ denotes the codimension of $Q\subseteq R$. As established before the definition, there exists at least one pair satisfying \propI. We focus our attention on a particular minimal pair. Specifically, among pairs $(N, \sigma)$ satisfying \propI with minimal $\dim N$, we choose one with minimal $\cod_N N^\sigma_x$.

With $N$ fixed, we will denote by $\overline{T}$ the quotient of $T$ by the kernel $\ker(T|_N)$ of the induced $T$-action on $N$. Observe that $\overline{T}$ acts effectively on $N$ and has dimension $\dim T - \dk(N)$. Moreover, we wish to emphasize that the involution $\sigma$ lies in $\overline{T}$.

The strategy for the rest of the proof is to choose a second involution in $\overline{T}$ in a certain minimal way, analyze the consequences of our minimal choices to prove \lem{lemIII} below, then to conclude the proof of \thm{thmP}.

To begin, we prove the following:

\begin{lemma} There exists an involution $\tau\in \overline{T}$ such that 
	\[\cod N^\tau_x 
	\equiv \cod N^{\langle\sigma,\tau\rangle}_x
	\equiv 0\bmod{4}\]
and
	\[0 < \cod_N N^\tau_x \leq \frac{\dim N - c}{2}.\]

Moreover, given any such $\tau$, the submanifolds $N^\tau_x$, $N^{\sigma\tau}_x$, and $N^{\langle\sigma,\tau\rangle}_x$ lie in $\mathcal{C}$.
\end{lemma}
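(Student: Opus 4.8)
The plan is to apply the second part of \prop{Griesmer} to the effective $\overline{T}$-action on $N$ to obtain the existence of $\tau$, and then to verify the four membership conditions for $\mathcal{C}$ more or less as in the proof of \lem{lemII}, using \prop{Griesmer} together with the connectedness and periodicity theorems.

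First I would set up the hypotheses of \prop{Griesmer}(2): we have $\overline{T}$ acting effectively and isometrically on the positively curved manifold $N$ (which is positively curved since it is totally geodesic in $M$), with fixed point $x$, and with $\dim\overline{T} = \dim T - \dk(N)$. Since $(N,\sigma)$ satisfies \propI, the involution $\sigma\in\overline{T}$ has $\cod_N N^\sigma_x$ minimal among involutions in $\overline{T}$ with $\cod_N N^\sigma_x \equiv 0\bmod 4$ (this minimality is built into the choice of the pair $(N,\sigma)$; if a smaller one existed we could feed it to \lem{lemI} or it would already give 4-periodicity — this point needs to be checked carefully). The key arithmetic is to confirm that $\dim\overline{T}$ satisfies the bound $s \geq \log_2(\dim N) + \frac{c}{2} + 1 + \log_2 3 - \delta(\dim N)$ required by \prop{Griesmer}(2): one uses $\dim T \geq f_c(n) = 2\log_2 n + \frac{c}{2} - 1 - \delta(n)$ together with the assumption (from \lem{lemI}, which we have assumed) that $\dim N > n/2^{\dk(N)/2}$, so that $\log_2(\dim N) > \log_2 n - \dk(N)/2$, whence $\dim\overline{T} = \dim T - \dk(N) > 2\log_2(\dim N) + \frac{c}{2} - 1 - \delta(n) - \dk(N) \geq \log_2(\dim N) + \frac{c}{2} - 1 + \log_2(\dim N) - \dk(N)/2 - \dk(N)/2 - \delta(n)$, and then one needs the surplus $\log_2(\dim N) - \dk(N) \geq 2 + \log_2 3$; since $c \leq \dim N$, $\dim N \equiv n \bmod 4$, and $\dim N$ is bounded below (by the base case, $\dim N \geq 6$ unless we are already done), a short estimate closes this. \prop{Griesmer}(2), applied with $\sigma$ in the role of the minimal involution, then yields $\tau\in\overline{T}$ with $N^\tau_x\not\subseteq N^\sigma_x$, $\cod N^\tau_x \equiv 0 \equiv \cod N^{\langle\sigma,\tau\rangle}_x \bmod 4$, and $0 < \cod_N N^\tau_x \leq \frac{\dim N - c}{2}$ (recall from the remarks after \prop{Griesmer} that the codimensions there are computed in $N$, and that $c$-connectedness makes $N^\sigma_x\cap N^\tau_x = N^{\langle\sigma,\tau\rangle}_x$).

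For the "moreover" clause, I would treat $N^\tau_x$, $N^{\sigma\tau}_x$, and $N^{\langle\sigma,\tau\rangle}_x$ and verify each of the four conditions defining $\mathcal{C}$, exactly as in \lem{lemII}. (i) $c$-connectedness of each inclusion into $M$: for $N^\tau_x$ and $N^{\sigma\tau}_x$ the codimension bound $\cod_N N^\tau_x \leq \frac{\dim N - c}{2}$ plus $\cod_N N = 0$ gives $\cod_M N^\tau_x \leq \cod_M N + \frac{\dim N - c}{2}$, and combined with the fact that $N\embedded M$ is itself $c$-connected (since $N\in\mathcal{C}$), the connectedness theorem (both parts, composing inclusions) gives $c$-connectedness; for $N^{\langle\sigma,\tau\rangle}_x = N^\sigma_x \cap N^\tau_x$ one uses part (2) of the connectedness theorem inside $N$ together with $\cod_N N^\sigma_x, \cod_N N^\tau_x \leq \frac{\dim N - c}{2}$ to get that $N^{\langle\sigma,\tau\rangle}_x \embedded N$ is $c$-connected, then composes with $N\embedded M$. (ii) $\cod \equiv 0 \bmod 4$: immediate from the congruences just obtained for $\cod N^\tau_x$ and $\cod N^{\langle\sigma,\tau\rangle}_x$ (noting $\cod N = \cod_M N \equiv 0$ since $N\in\mathcal{C}$), and for $N^{\sigma\tau}_x$ from $\cod N^{\sigma\tau}_x = \cod N^\tau_x + \cod N^\sigma_x - \cod N^{\langle\sigma,\tau\rangle}_x$ (transverse intersection) — all three terms are $\equiv 0\bmod 4$. (iii) $\dim \geq c$: each codimension-in-$N$ is at most $\frac{\dim N - c}{2}$, so each dimension is at least $\frac{\dim N + c}{2} \geq c$. (iv) the free $\pi$-action condition: since $c \geq 2$ each of these submanifolds has dimension $> \dim N / 2 \geq$ (half of the relevant ambient), so by the connectedness theorem it is the unique component of its fixed-point set of that size, hence is preserved by any $\pi$-action on $M$ commuting with $T$ — and such a $\pi$-action restricts to $N$ since $N\in\mathcal{C}$, and then restricts further to these submanifolds.

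The main obstacle is the arithmetic verification that $\dim\overline{T}$ meets the hypothesis of \prop{Griesmer}(2): one must combine the ambient bound $\dim T \geq f_c(n)$, the induction-derived inequality $\dim N > n/2^{\dk(N)/2}$ from \lem{lemI}, and the lower bounds $\dim N \geq c$, $\dim N \geq 6$, and $\dim N \equiv n \bmod 4$, and chase the $\delta(\cdot)$ parity corrections, to produce the required surplus $\log_2 3 + 2$. A secondary subtlety, which must be stated explicitly rather than hand-waved, is the justification that $\sigma$ has \emph{minimal} positive $\cod_N N^\sigma_x$ among involutions in $\overline{T}$ with that codimension divisible by $4$ — this is exactly what licenses invoking \prop{Griesmer}(2) "with $\sigma$ as above"; it follows because $(N,\sigma)$ was chosen among \propI-pairs of minimal $\dim N$ to have minimal $\cod_N N^\sigma_x$, so a smaller admissible involution would violate that minimality (its fixed-point component still lies in $\mathcal{C}$ by the \lem{lemII}-style argument, giving a competing \propI-pair).
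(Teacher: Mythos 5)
Your overall route is the same as the paper's: apply \prop{Griesmer}(2) to the effective $\overline{T}$-action on $N$ at $x$ to produce $\tau$, then verify the four conditions for membership in $\mathcal{C}$. However, the step you yourself single out as the main obstacle --- checking that $\dim\overline{T}\geq \log_2(\dim N)+\frac{c}{2}+1+\log_2(3)-\delta(\dim N)$ --- is not closed by the ingredients you list, and this is a genuine gap. You reduce to the surplus inequality $\log_2(\dim N)-\dk(N)\geq 2+\log_2 3$ and propose to obtain it from $\dim N>n/2^{\dk(N)/2}$ (\lem{lemI}) together with $\dim N\geq c$, $\dim N\geq 6$, and $\dim N\equiv n\bmod 4$; none of these bounds $\dk(N)$ from above, and \lem{lemI} only yields $\log_2(\dim N)-\dk(N)>\log_2 n-\frac{3}{2}\dk(N)$, which says nothing when $\dk(N)$ is large (indeed your intermediate inequality need not even hold). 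The missing ingredient is that $(N,\sigma)$ satisfies \propI so that $\cod_N N^\sigma_x$ is positive, divisible by $4$, and at most $n/(3\cdot 2^{\dk(N)})$; this forces $4\leq n/(3\cdot 2^{\dk(N)})$, hence $\dk(N)\leq \log_2 n-\log_2 3-2$. One then keeps the full $2\log_2 n$ coming from $f_c(n)$ (rather than weakening to $2\log_2(\dim N)$ at the start, which discards exactly the slack you need) and uses $\log_2(\dim N)\leq\log_2 n$ and $\delta(\dim N)=\delta(n)$ to finish. (Your worry about the minimality hypothesis on $\sigma$ in \prop{Griesmer}(2) is resolved essentially as you say: an involution in $\overline{T}$ of smaller positive codimension divisible by $4$ and at most $\frac{\dim N-c}{2}$ would produce another pair satisfying \propI with the same $N$, contradicting the minimal choice of $(N,\sigma)$.)

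A second, smaller gap concerns the $c$-connectedness of $N^{\sigma\tau}_x\embedded M$. Your codimension argument works for $N^\tau_x$ and $N^{\langle\sigma,\tau\rangle}_x$, but $\cod_N N^{\sigma\tau}_x$ is not bounded by $\frac{\dim N-c}{2}$; it can be as large as $\cod_N N^\sigma_x+\cod_N N^\tau_x\leq\dim N-c$, for which part (1) of the connectedness theorem gives nothing useful. The paper instead observes that $N^{\langle\sigma,\tau\rangle}_x\embedded N^{\sigma\tau}_x$ is $(c+1)$-connected and that $N^{\langle\sigma,\tau\rangle}_x\embedded M$ is $c$-connected, and deduces the claim for $N^{\sigma\tau}_x$ from these two facts. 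Relatedly, your identity $\cod N^{\sigma\tau}_x=\cod N^\sigma_x+\cod N^\tau_x-\cod N^{\langle\sigma,\tau\rangle}_x$ is off (the correct relation subtracts twice the codimension of $N^{\langle\sigma,\tau\rangle}_x$ in $N^{\sigma\tau}_x$), though the mod-$4$ conclusion survives since that codimension is itself divisible by $4$; and for the dimension bound one subtracts both codimensions, giving $\dim N^{\langle\sigma,\tau\rangle}_x\geq c$ rather than $\geq\frac{\dim N+c}{2}$. The verification of the $\pi$-action condition is correct and coincides with the paper's argument.
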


\begin{proof}
The existence of such a $\tau$ follows from \prop{Griesmer} once we establish that
	\[\dim(\overline{T}) \geq \log_2(\dim N) + \frac{c}{2} + 1 + \log_2(3) - \delta(\dim N).\]
Moreover, we see that this is the case by combining the following facts:
	\begin{itemize}
	\item $\dim(\overline{T}) = \dim(T) - \dk(N)$ by definition of $\dk(N)$,
	\item $\dim(T) \geq f_c(n)$ by assumption,
	\item $\delta(\dim N) = \delta(n)$ because $\dim N \equiv n \bmod{4}$, and
	\item $\dk(N) \leq \log_2(n) -\log_2(3) - 2$ because $(N,\sigma)$ satisfying \propI implies that $4\leq\cod N^\sigma_x \leq n/(3\cdot 2^{\dk(N)})$.
	\end{itemize}

For the second claim, let $\tau$ be any involution satisfying satisfying these properties. We wish to show that $N^\tau_x$, $N^{\sigma\tau}_x$, and $N^{\langle\sigma,\tau\rangle}_x$ lie in $\mathcal{C}$.

First let $H\subseteq T$ be such that $N = M^H_x$. If $p:T\to \overline{T}$ is the projection map, then $N^\tau_x = M^{\langle H, p^{-1}(\tau)\rangle}_x$, where $\langle H, p^{-1}(\tau)\rangle$ is the subgroup of $T$ generated by $H$ and $p^{-1}(\tau)$. Similarly, $N^{\sigma\tau}_x = M^{\langle H, p^{-1}(\sigma\tau)\rangle}_x$ and $N^{\langle\sigma,\tau\rangle}_x = M^{\langle H, p^{-1}(\langle\sigma,\tau\rangle)\rangle}_x$. 

Second, the inclusions $N^\sigma_x \cap N^\tau_x \embedded N^\tau_x \embedded N$
are $c$-connected by the connectedness theorem together with the upper bounds on $\cod_N N^\sigma_x$ and $\cod_N N^\tau_x$. In particular, $N^\sigma_x\cap N^\tau_x$ is connected, so we have $N^{\langle\sigma,\tau\rangle}_x = N^\sigma_x \cap N^\tau_x$. Also by the connectedness theorem, the inclusion
	$N^{\langle\sigma,\tau\rangle}_x \embedded N^{\sigma\tau}_x$
is $(c+1)$-connected. Since $N\in\mathcal{C}$, this proves that the inclusions of $N^\tau_x$, $N^{\sigma\tau}_x$, and $N^{\langle\sigma,\tau\rangle}_x$ into $M$ are $c$-connected. 

Third, the dimension of all three submanifolds is congruent to $n$ modulo 4 since $\dim N \equiv n\bmod{4}$, $\cod_N N^{\sigma}_x\equiv 0\bmod{4}$, $\cod_N N^{\tau}_x\equiv 0\bmod{4}$, $\cod_N N^{\langle\sigma,\tau\rangle}_x \equiv 0\bmod{4}$, and finally
	\[\cod_N N^{\sigma\tau}_x \equiv \cod_N N^{\sigma}_x
	                                +\cod_N N^{\tau}_x
	                                \equiv 0 \bmod{4}.\]

Fourth, the dimension of all three submanifolds is at least that of $M^{\langle\sigma,\tau\rangle}_x$, which has dimension at least \[\dim N - \cod_N N^\sigma_x - \cod_N N^\tau_x\geq c.\]

Finally, let $\pi\times M \to M$ be a free group action commuting with the $T$-action on $M$. We wish to show that the $\pi$-action restricts to $\pi$-actions on $N^\tau_x$, $N^\sigma_x \cap N^\tau_x$, and $N^{\sigma\tau}_x$. This follows from the following observations:
	\begin{itemize}
	\item By assumption, $N\in\mathcal{C}$, so the $\pi$-action restricts to a $\pi$-action on $N$.
	\item By assumption, the dimensions of $N^\sigma_x$ and $N^\tau_x$ in $N$ are at least $\frac{1}{2}\dim(N)$, hence the connectedness theorem implies that $N^\sigma_x$ and $N^\tau_x$ are the unique such components of $N^\sigma$ and $N^\tau$, respectively. It follows that the $\pi$-action preserves $N^\sigma_x$ and $N^\tau_x$.
	\item Since $\pi$ preserves $N^\sigma_x$ and $N^\tau_x$, it also preserves $N^\sigma_x \cap N^\tau_x$.
	\item Since $\pi$ preserves $N^\sigma_x \cap N^\tau_x$, the fact that $N^\sigma_x \cap N^\tau_x \subseteq N^{\sigma\tau}_x$ implies that $\pi$ also preserves $N^{\sigma\tau}_x$.
	\end{itemize}
This concludes the proof that $N^\tau_x$, $N^{\sigma\tau}_x$, and $N^{\langle\sigma,\tau\rangle}_x$ satisfy the requirements needed to be elements of $\mathcal{C}$.
\end{proof}

We now choose $\tau\in\overline{T}$ such that
	\[\cod_N N^\tau_x \equiv \cod_N N^{\langle\sigma,\tau\rangle}_x \equiv 0 \bmod{4}\]
and
	\[0 < \cod_N N^\tau_x \leq \frac{\dim(N) - c}{2}\]
and such that $\cod_N N^\tau_x$ is minimal among all such choices.

Having chosen $\tau$, we use the minimality of $\dim N$ and $\cod_N N^\tau_x$ to obtain the following:

\begin{lemma}\label{lemIII}
Both of the following hold:
	\begin{enumerate}
	\item $\dk(N^{\sigma\tau}_x) - \dk(N) \geq 2$ or the intersection $N^\sigma_x \cap N^\tau_x$ is transverse in $N$.	
	\item $\dk(N^\tau_x)  - \dk(N)\leq 3$ with equality only if $N^\sigma_x\cap N^\tau_x$ is not transverse in $N$.
	\end{enumerate}
\end{lemma}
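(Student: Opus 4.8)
The plan is to work in the linear algebra of the isotropy representation of $\overline{T}$ at $x$ on $T_xN$. Decompose $T_xN$ into $\overline{T}$-invariant $2$-planes (plus a possible trivial line); on each plane the commuting involutions $\sigma$ and $\tau$ act by $\pm1$, so the planes split into four classes according to the pair of signs, with multiplicities, say, $a$, $p$, $q$, $t$ for the types $(+,+)$, $(+,-)$, $(-,+)$, $(-,-)$. Then $\cod_N N^\sigma_x=2(q+t)$, $\cod_N N^\tau_x=2(p+t)$, $\cod_N N^{\sigma\tau}_x=2(p+q)$, and $\cod_N N^{\langle\sigma,\tau\rangle}_x=2(p+q+t)$; since $N^\sigma_x$, $N^\tau_x$, $N^{\langle\sigma,\tau\rangle}_x$ all lie in $\mathcal{C}$ their codimensions are divisible by $4$, which forces $p$, $q$, $t$ to all be even; and $N^\sigma_x\cap N^\tau_x=N^{\langle\sigma,\tau\rangle}_x$ is transverse in $N$ exactly when $t=0$. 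I will also use the elementary observation that if an involution $\rho$ of $\overline{T}$ fixes $N^\tau_x$ pointwise then $N^\rho_x$ and $N^{\rho\tau}_x$ meet transversely in $N$ with intersection component $N^\tau_x$, so that $\cod_N N^\rho_x\le\cod_N N^\tau_x$ with equality only for $\rho=\tau$.

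For part (1), assume $N^\sigma_x\cap N^\tau_x$ is not transverse, i.e.\ $t\ge1$. If $\sigma=\tau$ in $\overline{T}$ this contradicts the requirement $N^\tau_x\not\subseteq N^\sigma_x$ coming from \prop{Griesmer}(2) that we may build into the choice of $\tau$, so we may assume $p+q\ge1$; then $\dim N^{\sigma\tau}_x<\dim N$, and since $t\ge1$ the involution $\sigma$ descends to a nontrivial involution $\bar\sigma$ of $N^{\sigma\tau}_x$ whose fixed-point component at $x$ is $N^{\langle\sigma,\tau\rangle}_x$; by the lemma preceding \lem{lemIII}, both $N^{\sigma\tau}_x$ and $N^{\langle\sigma,\tau\rangle}_x$ lie in $\mathcal{C}$. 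Assuming for contradiction that $\dk(N^{\sigma\tau}_x)-\dk(N)\le1$, I would check that $(N^{\sigma\tau}_x,\bar\sigma)$ satisfies \propI: the relevant codimension is $\cod_{N^{\sigma\tau}_x}N^{\langle\sigma,\tau\rangle}_x=2t>0$; it is at most $\tfrac12(\dim N^{\sigma\tau}_x-c)$ since this is equivalent to $\cod_N N^\sigma_x+\cod_N N^\tau_x\le\dim N-c$, each summand being at most $\tfrac12(\dim N-c)$; and it is at most $n/(3\cdot2^{\dk(N^{\sigma\tau}_x)})$ because minimality of $\cod_N N^\tau_x$ forces $\cod_N N^{\sigma\tau}_x\ge\cod_N N^\tau_x$ (otherwise $\sigma\tau$ would beat $\tau$), hence $q\ge t$, hence $2t\le\tfrac12\cod_N N^\sigma_x\le n/(6\cdot2^{\dk(N)})\le n/(3\cdot2^{\dk(N^{\sigma\tau}_x)})$. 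This contradicts the minimality of $\dim N$ among pairs satisfying \propI, so $\dk(N^{\sigma\tau}_x)-\dk(N)\ge2$.

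For part (2), let $S=\ker(\overline{T}|_{N^\tau_x})$, a subtorus of $\overline{T}$ of dimension $\dk(N^\tau_x)-\dk(N)$ that contains $\tau$ and acts faithfully on the normal space of $N^\tau_x$ in $N$, whose $\overline{T}$-invariant $2$-planes are the $(+,-)$- and $(-,-)$-planes from the first paragraph. For an involution $\rho\in S$, the two congruences $\cod N^\rho_x\equiv\cod N^{\langle\sigma,\rho\rangle}_x\equiv0\bmod4$ — the conditions imposed in choosing $\tau$ — translate into $\Z_2$-linear conditions on $\rho$: that $\rho$ negate an even number of the normal $2$-planes, and that it negate an even number of the $(+,-)$-planes among them; when $t=0$ these coincide, so they cut the $2$-torsion of $S$ down by at most one, and otherwise by at most two, and $\tau$ satisfies both. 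Since any such $\rho$ fixes $N^\tau_x$ pointwise, $\cod_N N^\rho_x\le\cod_N N^\tau_x$ with equality only for $\rho=\tau$. Hence if $\dk(N^\tau_x)-\dk(N)\ge4$ — or $\ge3$ when $t=0$ — the subspace of $2$-torsion elements of $S$ satisfying both conditions still has dimension at least $2$, so it contains an involution $\rho\notin\{\id,\tau\}$, and this $\rho$ is an admissible competitor with $\cod_N N^\rho_x<\cod_N N^\tau_x$, contradicting the minimal choice of $\tau$. Therefore $\dk(N^\tau_x)-\dk(N)\le3$, with equality only if $t\ge1$, i.e.\ only if $N^\sigma_x\cap N^\tau_x$ is not transverse in $N$.

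The step I expect to be the main obstacle is the codimension bookkeeping in part (1): forcing $\cod_{N^{\sigma\tau}_x}N^{\langle\sigma,\tau\rangle}_x$ below $n/(3\cdot2^{\dk(N^{\sigma\tau}_x)})$ is tight, and it is precisely here that the minimal choice of $\tau$ — not merely its existence — must be used. A secondary delicate point is identifying the two mod-$4$ congruences in part (2) as genuine $\Z_2$-linear conditions on $S$ and seeing that they collapse to a single one when the intersection is transverse, since this is what yields the sharp dichotomy in the statement.
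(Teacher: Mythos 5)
Your proposal is correct and follows essentially the same route as the paper: part (1) derives a contradiction to the minimality of $\dim N$ by showing $(N^{\sigma\tau}_x,\bar\sigma)$ satisfies \propI (with the bound $\cod_{N^{\sigma\tau}_x}N^{\langle\sigma,\tau\rangle}_x\le\tfrac12\cod_N N^\sigma_x$ extracted from the minimality of $\cod_N N^\tau_x$, exactly as in the paper), and part (2) contradicts the minimality of $\cod_N N^\tau_x$ by finding a better involution in the (4- or 3-dimensional) kernel torus of the action on $N^\tau_x$ after imposing the two mod-4 congruences as $\Z_2$-linear conditions, which collapse to one in the transverse case. Your explicit $(a,p,q,t)$ bookkeeping just makes the paper's terse assertions precise; the only hair to comb is that $\tau$ need not lie in the identity component of $\ker(\overline{T}|_{N^\tau_x})$, but the count still works since the all-ones normal vector satisfies both linear conditions in any case.
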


\begin{proof}
We prove the first statement by contradiction. We assume therefore that $\dk(N^{\sigma\tau}_x) \leq 1 + \dk(N)$ and that $N^\sigma_x\cap N^\tau_x$ is not transverse. The first assumption implies that $\overline{T}/\ker\of{\overline{T}|_{N^{\sigma\tau}_x}}
		= T/\ker\of{T|_{N^{\sigma\tau}_x}}$
has dimension at least $\dim(T) - \dk(N) - 1$. Let $\overline{\sigma}$ denote the image of $\sigma$ under the projection $\overline{T}\to\overline{T}/\ker\of{\overline{T}|_{N^{\sigma\tau}_x}}$, and observe that $\of{N^{\sigma\tau}_x}^{\overline{\sigma}}_x = N^{\langle\sigma,\tau\rangle}_x$. The second assumption implies that the inclusion $N^{\langle\sigma,\tau\rangle}_x \subseteq N^{\sigma\tau}_x$ has positive codimension. Moreover, this codimension is at most $\frac{1}{2}\cod_N N^\sigma_x$ by the minimality of $\cod_N N^\tau_x$. Putting these facts together, we see that $(N^{\sigma\tau}_x, \overline{\sigma})$ satisfies \propI. Since $\dim N^{\sigma\tau}_x < \dim N$, this is a contradiction to the minimality of $\dim N$.

Proceeding to the second statement, suppose for a moment that $\dk(N^\tau_x) \geq 4 + \dk(N)$. Then there exits a 4-torus inside $\overline{T}$ that fixes $N^\tau_x$. It follows that we may choose an involution $\iota$ inside this 4-torus such that
	\[\cod_N N^\iota_x \equiv \cod_N N^{\langle\sigma,\iota\rangle}_x \equiv 0 \bmod{4}\]
and $N^\tau_x \subseteq N^\iota_x \subseteq N$ with both inclusions strict. Moreover, since $N^\tau_x\not\subseteq N^\sigma_x$, it follows for free that $N^\iota_x \not\subseteq N^\sigma_x$. Hence we have a contradiction to the minimality of $\cod_N N^\tau_x$, and we may conclude that $\dk(N^\tau_x) \leq 3 + \dk(N)$.

For the equality case, suppose that $\dk(N^\tau_x) = 3 + \dk(N)$ and that $N^\sigma_x\cap N^\tau_x$ is transverse in $N$. Since a 3-torus inside $\overline{T}$ fixes $N^\tau_x$, we may choose an involution $\iota\in \overline{T}$ such that $\cod_N N^\iota_x \equiv 0 \bmod{4}$ and $N^\tau_x \subseteq N^\iota_x \subseteq N$ with all inclusions strict. Since $N^\sigma_x\cap N^\tau_x$ is transverse, it follows that $\cod_N N^{\langle\sigma,\iota\rangle}_x \equiv 0\bmod{4}$ as well, hence we have another contradiction to the minimality of $\cod_N N^\tau_x$.
\end{proof}

We are ready to conclude the proof of \thm{thmP}. We do this by breaking the proof into cases and showing in each case that $N^{\langle\sigma,\tau\rangle}_x$ has 4-periodic rational cohomology. Since we have already established that $N^{\langle\sigma,\tau\rangle}_x \in \mathcal{C}$, this would conclude the proof of \thm{thmP}. The three cases are as follows:

\begin{description}
\item[Case 1] $2\dk(N) - \dk(N^\tau_x) \leq -2$
\item[Case 2] $2\dk(N) - \dk(N^\tau_x) \geq -1$ and $N^\sigma_x\cap N^\tau_x$ is not transverse.
\item[Case 3] $2\dk(N) - \dk(N^\tau_x) \geq -1$ and $N^\sigma_x\cap N^\tau_x$ is transverse.
\end{description}

Clearly one of these cases occurs, so our task will be complete once we show, in each case, that $N^{\langle\sigma,\tau\rangle}_x$ has 4-periodic rational cohomology. We assign each case its own lemma.


\begin{lemma}[Case 1] If $2\dk(N) - \dk(N^\tau_x) \leq -2$, then $N^{\langle\sigma,\tau\rangle}_x$ has $4$-periodic rational cohomology.
\end{lemma}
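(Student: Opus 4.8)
The plan is to exploit the hypothesis $2\dk(N)-\dk(N^\tau_x)\le -2$ to force the codimension of $N^\tau_x$ in $N$ to be small enough that the periodicity theorem (\thm{THMstrongperiodicity}) applies to a pair of totally geodesic submanifolds of $N$. Concretely, the inequality $2\dk(N)-\dk(N^\tau_x)\le -2$ is equivalent to $\dk(N^\tau_x)-\dk(N)\ge 2\dk(N)-\dk(N)+2 = \dk(N)+2$, i.e. $\dk(N^\tau_x)\ge \dk(N)+2$; by \lem{lemIII}(2) this is consistent only if $N^\sigma_x\cap N^\tau_x$ is transverse or $\dk(N^\tau_x)-\dk(N)\in\{2,3\}$, and in either case I want to use the second estimate of \lem{lemI} with $Q=N^\tau_x$ and the same $N$. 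First I would verify that the hypotheses of \lem{lemI}(2) are met: we have $N^\tau_x\in\mathcal{C}$ and $N^\tau_x\subseteq N$ from the preceding lemma, and I would dispose of the trivial case $\dim N^\tau_x = n$ separately (there $N=M$ and $\cod_N N^\tau_x = \cod N^\tau_x>0$ forces $\dim N^\tau_x<n$, so this does not actually arise). Then, reading off the case $2\dk(N)-\dk(Q)\ge -2$ of \lem{lemI}(2), I get $\dim N^\tau_x > \tfrac32 k$ whenever $k < n/(3\cdot 2^{\dk(N)})$, which rearranges to a lower bound on $\dim N^\tau_x$, equivalently an upper bound $\cod_N N^\tau_x < \tfrac13\dim N$ on the codimension inside $N$ (using $\dim N \le n/2^{\dk(N)/2}\cdot(\text{const})$ and part (1) of \lem{lemI}, or more directly combining the two estimates as in the proof of \lem{lemII-improvement}).

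The key step is then to produce two totally geodesic submanifolds of $N$ whose transverse intersection is $N^{\langle\sigma,\tau\rangle}_x$ and whose codimensions add up to less than $\tfrac12\dim N$, so that part (1) of the periodicity theorem gives $\gcd(4,\cdot)$-periodicity — and since all the relevant codimensions are $\equiv 0\bmod 4$, this is genuine $4$-periodicity. If $N^\sigma_x\cap N^\tau_x$ is transverse in $N$, I would take the pair $(N^\sigma_x, N^\tau_x)$ inside $N$: their intersection is $N^{\langle\sigma,\tau\rangle}_x$, and $2\cod_N N^\sigma_x + 2\cod_N N^\tau_x \le 2\cdot\tfrac13\dim N + 2\cdot\tfrac13\dim N$ is not quite $\le \dim N$, so I instead need the sharper bound: $\cod_N N^\sigma_x \le \tfrac13\dim N$ from \propI and $\cod_N N^\tau_x$ much smaller via the $\tfrac32 k$ estimate, i.e. $\cod_N N^\tau_x < \tfrac13\dim N$ actually improves to $\cod_N N^\tau_x < \dim N - \tfrac23\dim N = \tfrac13 \dim N$ — I would track the constants carefully here so that the sum of twice the two codimensions stays below $\dim N$. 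If $N^\sigma_x\cap N^\tau_x$ is not transverse, then as in \lem{lemII-improvement} there is an involution $\iota$ with $N^\tau_x\cap N^\sigma_x$ (or a refinement) expressed as a transverse intersection of $N^\iota_x$ and $N^{\iota\sigma}_x$ (or $N^{\iota\tau}_x$), each of codimension at most half of $\cod_N(N^\sigma_x\cap N^\tau_x)$; the non-transversality means $\cod_N N^{\langle\sigma,\tau\rangle}_x < \cod_N N^\sigma_x + \cod_N N^\tau_x$, which keeps the sum of the two new codimensions small enough for \thm{THMstrongperiodicity}(1).

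The main obstacle I expect is the bookkeeping of the numerical constants: \lem{lemI}(2) only gives $\dim Q > \tfrac32 k$ for $k$ strictly below $n/(3\cdot 2^{\dk(N)})$, so I must choose $k$ as large as possible (e.g. $k = \lceil n/(3\cdot 2^{\dk(N)}) \rceil - 1$ or a nearby integer) and then convert the resulting inequality on $\dim N^\tau_x$ into a clean statement about $\cod_N N^\tau_x$ relative to $\dim N$, using the relation $\dim N \le n/2^{\dk(N)}\cdot\sqrt{2}^{\,?}$ — actually using part (1), $\dim N > n/2^{\dk(N)/2}$, so $n/2^{\dk(N)} < \dim N^2/n \le \dim N$, giving $k$ comparable to $\dim N/3$ up to the $2^{\dk(N)/2}$ discrepancy, and I need to check this discrepancy does not ruin the factor. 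I would first do this estimate in the transverse case to pin down exactly how much room there is, then handle the non-transverse case by the $\iota$-trick, which gains an extra factor of $2$ and is therefore strictly easier. The divisibility conditions ($\cod \equiv 0 \bmod 4$ throughout, already guaranteed by membership in $\mathcal{C}$ and the construction of $\tau$) are what upgrade the periodicity theorem's conclusion from $\gcd(4,k_1)$-periodicity to $4$-periodicity, and I would state this explicitly at the end.
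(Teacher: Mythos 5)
There is a genuine gap, and it is exactly at the point you flag as ``bookkeeping'': the constants do not work out, and no amount of careful tracking will fix them. In Case 1 the only codimension bounds available are $k=\cod_N N^\sigma_x\leq n/(3\cdot 2^{\dk(N)})$ from \propI and, from \lem{lemI}(2), a lower bound on $\dim N^\tau_x$ of at best $\tfrac{3}{2}k$ --- and not even that in general, since $2\dk(N)-\dk(N^\tau_x)$ can equal $-3$ (e.g.\ $\dk(N)=0$, $\dk(N^\tau_x)=3$, permitted by \lem{lemIII}(2)), in which case only $\dim N^\tau_x>k$ is available. Even in the best subcase this gives roughly $l=\cod_N N^\tau_x\lesssim \tfrac{1}{2}\dim N$ and $k\lesssim\tfrac{1}{3}\dim N$, so $2k+2l$ can approach $\tfrac{5}{3}\dim N$, far above the threshold $\dim N$ needed for \thm{THMstrongperiodicity}(1) applied to $N^\sigma_x$ and $N^\tau_x$ inside $N$. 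The same failure afflicts your non-transverse variant: splitting $N^{\langle\sigma,\tau\rangle}_x$ as a transverse intersection of two intermediate submanifolds of $N$ still requires twice its codimension $k+l-a$ to be at most $\dim N$, which is not forced. The inequality $2k+l\leq n$ (or $3k+l\leq n$) that makes the ``intersect inside $N$'' strategy viable is precisely what the stronger kernel hypotheses of Cases 2 and 3 buy via \lem{lemI}; Case 1 is separated out because those bounds are unavailable.

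The missing idea is to change the ambient manifold. The hypothesis gives $\dk(N^\tau_x)\geq 2+\dk(N)$, hence an involution $\iota\in\overline{T}$ with $N^\tau_x\subsetneq N^\iota_x\subsetneq N$, and one applies the periodicity theorem \emph{inside} $N^{\sigma\tau}_x$: setting $a=\cod\of{N^{\langle\sigma,\tau\rangle}_x\subseteq N^{\sigma\tau}_x}$ and $b=\cod\of{N^{\langle\sigma,\iota\rangle}_x\subseteq N^{\sigma\iota}_x}$, when $0<b<a$ the submanifolds $(N^{\sigma\tau}_x)^\iota_x$ and $(N^{\sigma\tau}_x)^{\tau\iota}_x$ meet transversely in $N^{\langle\sigma,\tau\rangle}_x$ with codimensions $b$ and $a-b$, and the needed inequality $2a\leq \dim N^{\sigma\tau}_x=\dim N-k-l+2a$ reduces to $\dim N^\tau_x\geq k$, which follows from even the weakest case of \lem{lemI}(2). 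Your proposal never relocates the intersection into this smaller ambient space, which is why the estimates cannot close. You also do not address the degenerate subcase $b\in\{0,a\}$, where the transverse-splitting device fails entirely; there the paper invokes the minimality of $\cod_N N^\tau_x$ and mod-$4$ congruences to force $\dk(N)=0$, $N=M$, and $\dk(N^\tau_x)=2$, obtains $4$-periodicity of $H^*(N^\tau_x;\Q)$ from the transverse pair $N^\iota_x$, $N^{\iota\tau}_x$ in $N$, and only then transfers periodicity to $N^{\langle\sigma,\tau\rangle}_x$ via the connectedness theorem. Both of these mechanisms are absent from your outline, so the proof as proposed does not go through.
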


\begin{proof}[Proof in Case 1]
First observe that $\dk(N^\tau_x) \geq 2 + \dk(N)$ since, by definition, $\dk(N)\geq 0$. We may therefore choose an involution $\iota\in \overline{T}$ such that $N^\tau_x \subseteq N^\iota_x \subseteq N$ with all inclusions strict. In addition, we may assume $\cod_N N^\iota_x \equiv 0\bmod{4}$ in the case where $\dk(N^\tau_x) \geq 3 + \dk(N)$. Choose a basis for the tangent space $T_x N$ so that $\sigma$, $\tau$, and $\iota$ have the following block representations

\begin{eqnarray*}
\sigma &=& \diag(-I,-I,-I,\hspace{.12in}I,\hspace{.12in}I,\hspace{.12in}I)\\
\tau   &=& \diag(-I,-I,   \hspace{.12in}I,-I,-I, \hspace{.12in}I)\\
\iota  &=& \diag(-I,\hspace{.12in}I,\hspace{.12in}I,-I,\hspace{.12in}I, \hspace{.12in}I)
\end{eqnarray*}
where the blocks of have size $b$, $a-b$, $k-a$, $m-b$, $(l-a) - (m-b)$, and $\dim N^{\langle\sigma,\tau\rangle}_x$, where $k = \cod_N N^\sigma_x$, $l = \cod_N N^\tau_x$, and $m = \cod_N N^\iota_x$, and where $a = \cod\of{N^{\langle\sigma,\tau\rangle}_x \subseteq N^{\sigma\tau}_x}$ and $b=\cod\of{N^{\langle\sigma,\iota\rangle}_x \subseteq N^{\sigma\iota}_x}$. 

Suppose for a moment that $0< b < a$. Then $(N^{\sigma\tau}_x)^\iota_x$ and $(N^{\sigma\tau}_x)^{\tau\iota}_x$ intersect transversely in $N^{\sigma\tau}_x$ and have codimensions $b$ and $a-b$, respectively. Observe that the intersection is $N^{\langle\sigma,\tau\rangle}_x$. The codimensions $b$ and $a-b$ are positive, and they satisfy
	\[2b + 2(a-b) = 2a \leq \dim N^\tau_x - k + 2a = \dim N^{\sigma\tau}_x\]
by \lems{lemI}{lemIII}. It follows from the periodicity theorem that $N^{\langle\sigma,\tau\rangle}_x$ has 4-periodic rational cohomology.

Suppose now that $b=0$ or $b=a$. It follows from \lem{lemIII} that
	\[\cod_N N^\iota_x \equiv \cod_N N^{\langle\sigma,\iota\rangle}_x \bmod{4},\]
so the minimality of $\cod_N N^\tau_x$ implies that both of these codimensions are congruent to 2 modulo 4. By our choice of $\iota$, we must have that $\dk(N^\tau_x) \leq 2 + \dk(N)$ and hence that $2\dk(N) - \dk(N^\tau_x) \geq -2$. 
Combining this with the assumption in this case, we conclude $\dk(N) = 0$ and $\dk(N^\tau_x) = 2 + \dk(N)$. The first of these equalities implies $N = M$ by \lem{lemI}. Using \lem{lemI} again, we conclude $\dim N^\tau_x \geq \frac{1}{2} \dim N$. This bound, the periodicity theorem, and the existence of the transverse intersection of $N^\iota_x$ and $N^{\iota\tau}_x$ inside $N$ imply that $H^*(N^\tau_x;\Q)$ is 4-periodic. Finally, \lem{lemI} implies
	\[\dim N^\tau_x > \frac{3}{2} k \geq k + a,\]
which implies
	\[n - k - l > \frac{1}{2}\of{n-k-l + a} = \frac{1}{2}\dim N^{\langle\sigma,\tau\rangle}_x.\]
It follows from the connectedness theorem that $N^{\langle\sigma,\tau\rangle}_x$ also has 4-periodic rational cohomology. This concludes the proof in Case 1.
\end{proof}

\smallskip

\begin{lemma}[Case 2] If $2\dk(N) - \dk(N^\tau_x) \geq -1$ and $N^\sigma_x\cap N^\tau_x$ is not transverse, then $N^{\langle\sigma,\tau\rangle}_x$ has $4$-periodic rational cohomology.
\end{lemma}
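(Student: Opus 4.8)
The plan is to mimic the strategy of Case~1: extract a third involution from the extra torus that non-transversality forces, use it to present $N^{\langle\sigma,\tau\rangle}_x$ as a transverse intersection of two totally geodesic submanifolds of small codimension inside $N^\sigma_x$ (or $N^\tau_x$), and invoke \thm{THMstrongperiodicity}. To begin I would record the consequences of the hypotheses. Decompose $T_xN$ into the four joint eigenspaces $V_{\pm\pm}$ of $\sigma$ and $\tau$, so $V_{++}=T_xN^{\langle\sigma,\tau\rangle}_x$. Non-transversality means $d:=\dim V_{--}>0$, and since $\cod_NN^\sigma_x$, $\cod_NN^\tau_x$, $\cod_NN^{\langle\sigma,\tau\rangle}_x$ are all $\equiv0\bmod4$ and $\cod_NN^{\langle\sigma,\tau\rangle}_x=\cod_NN^\sigma_x+\cod_NN^\tau_x-d$, also $d\equiv0\bmod4$. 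By \lem{lemIII}(1), $\dk(N^{\sigma\tau}_x)\geq\dk(N)+2$, so a $2$-torus $T'\subseteq\overline T$ fixes $N^{\sigma\tau}_x$ pointwise; moreover $N^\tau_x\not\subseteq N^\sigma_x$ (otherwise $N^{\sigma\tau}_x=N$, contradicting $\dk(N^{\sigma\tau}_x)\geq\dk(N)+2$), and $\dk(N)=0$ holds exactly when $N=M$ by \lem{lemI}(1). Finally $\sigma$ is itself admissible in the minimization that defined $\tau$ (it satisfies $\cod_NN^\sigma_x\equiv\cod_NN^{\langle\sigma,\sigma\rangle}_x\equiv0\bmod4$ and $0<\cod_NN^\sigma_x\leq\tfrac{\dim N-c}{2}$ by \propI), so $\cod_NN^\tau_x\leq\cod_NN^\sigma_x$.

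In the generic situation I would choose an involution $\iota\in T'$ with $N^{\sigma\tau}_x\subsetneq N^\iota_x\subsetneq N$ (possible because $T'$ acts effectively, hence with at least two independent weights, on the normal bundle of $N^{\sigma\tau}_x$), and, if $\dk(N^{\sigma\tau}_x)\geq\dk(N)+3$, arrange in addition $\cod_NN^\iota_x\equiv0\bmod4$. Then $(N^\sigma_x)^\iota_x$ and $(N^\sigma_x)^{\tau\iota}_x$ are totally geodesic in the closed, simply connected, positively curved manifold $N^\sigma_x$, meet transversely, and intersect in $N^{\langle\sigma,\tau\rangle}_x$; their codimensions sum to $\cod(N^{\langle\sigma,\tau\rangle}_x\subseteq N^\sigma_x)=\cod_NN^\tau_x-d$. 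The inequality $2(\cod_NN^\tau_x-d)\leq\dim N^\sigma_x$ required by \thm{THMstrongperiodicity}(1) rearranges to $2\cod_NN^\tau_x+\cod_NN^\sigma_x\leq\dim N+2d$, and this holds because $2\cod_NN^\tau_x+\cod_NN^\sigma_x\leq3\cod_NN^\sigma_x\leq\tfrac{n}{2^{\dk(N)}}\leq\dim N$, using $\cod_NN^\tau_x\leq\cod_NN^\sigma_x\leq\tfrac{n}{3\cdot2^{\dk(N)}}$ from \propI and $\dim N\geq\tfrac{n}{2^{\dk(N)}}$ from \lem{lemI}(1) (trivially when $N=M$). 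Applying the periodicity theorem, $N^{\langle\sigma,\tau\rangle}_x$ has $\gcd(4,k)$-periodic rational cohomology for $k$ one of the two codimensions. Since a closed simply connected space with $1$- or $2$-periodic rational cohomology is automatically $4$-periodic (a $1$-periodic simply connected space is a rational homology sphere; if $2$-periodicity is witnessed by $y\in H^2$, then $y^2\in H^4$ witnesses $4$-periodicity), we conclude $N^{\langle\sigma,\tau\rangle}_x$ is $4$-periodic. The same argument applies with the roles of $\sigma$ and $\tau$ interchanged, using $N^\tau_x$, $\iota$, $\sigma\iota$ in place of $N^\sigma_x$, $\iota$, $\tau\iota$.

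The argument just given breaks only in the degenerate sub-case where, for every admissible $\iota$, one of $(N^\sigma_x)^\iota_x$, $(N^\sigma_x)^{\tau\iota}_x$ (and likewise in $N^\tau_x$) is all of the ambient manifold --- equivalently, both $V_{+-}$ and $V_{-+}$ are single weight spaces for the torus fixing $N^{\langle\sigma,\tau\rangle}_x$, so that no involution splits them. This mirrors the ``$b=0$ or $b=a$'' case of Case~1 and is precisely where the Case~2 hypothesis $2\dk(N)-\dk(N^\tau_x)\geq-1$ enters. I would first try to split $V_{--}$ instead: if some involution does, one obtains a transverse intersection in $N^{\sigma\tau}_x$ with governing inequality $2d\leq\dim N^{\sigma\tau}_x$, which again reduces to $\cod_NN^\sigma_x+\cod_NN^\tau_x\leq\dim N$ and holds. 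Otherwise the normal bundle of $N^{\langle\sigma,\tau\rangle}_x$ in $N$ has at most three weight directions, forcing $\dk(N^{\langle\sigma,\tau\rangle}_x)\leq\dk(N)+3$; combined with \lem{lemIII}, the minimality of $\cod_NN^\tau_x$, and $\dk(N^\tau_x)\leq2\dk(N)+1$, the kernel dimensions get pinned down so tightly that $\dk(N)=0$, i.e. $N=M$ and $\dk(N^\tau_x)\leq1$. In that case \lem{lemI}(2) gives the strong bound $\dim N^\tau_x>2\cod_NN^\sigma_x$ (hence $\dim N^{\langle\sigma,\tau\rangle}_x>\tfrac12\dim N^\sigma_x$ by the same rearrangement as above), a transverse intersection inside $M$ built from $\iota$ shows that $N^\tau_x$ (or $M^\sigma_x$) has $4$-periodic rational cohomology, and $4$-periodicity descends to $N^{\langle\sigma,\tau\rangle}_x$ via the connectedness theorem exactly as at the end of the proof in Case~1.

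I expect this degenerate sub-case --- keeping track of the mod-$4$ congruences and converting the rigidity it imposes into either a contradiction to minimality or an explicit transverse intersection --- to be the main obstacle; once it is disposed of, the generic case is essentially a three-involution bookkeeping exercise together with the periodicity theorem and the elementary remark that low-period rational cohomology is $4$-periodic.
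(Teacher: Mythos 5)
Your high-level plan coincides with the paper's: use \lem{lemIII}(1) to extract a third involution $\iota$ with $N^{\sigma\tau}_x\subsetneq N^\iota_x\subsetneq N$, realize $N^{\langle\sigma,\tau\rangle}_x$ as a transverse intersection of two fixed-point components, and invoke the periodicity theorem. However, there are two genuine gaps in the execution.

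First, your generic-case inequality $2(\cod_N N^\tau_x-d)\leq\dim N^\sigma_x$ rests on $\cod_N N^\tau_x\leq\cod_N N^\sigma_x$, which you justify by declaring $\sigma$ admissible in the minimization defining $\tau$. It is not: that minimization ranges over involutions with $N^\tau_x\not\subseteq N^\sigma_x$ (this constraint is invoked explicitly in the proof of \lem{lemIII}, in the step ``since $N^\tau_x\not\subseteq N^\sigma_x$, it follows for free that $N^\iota_x\not\subseteq N^\sigma_x$''), and $\sigma$ fails it. Writing $k=\cod_N N^\sigma_x$ and $l=\cod_N N^\tau_x$, without $l\leq k$ the only a priori bound is $l\leq\frac{\dim N-c}{2}$, and then $2(l-d)\leq\dim N-k$ would require $k\leq c+2d$, which is false in general. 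The paper never compares $k$ and $l$: it works inside $N^\tau_x$ rather than $N^\sigma_x$, splitting the eigenspace you call $V_{-+}$, so the two codimensions sum to $k-d$ and the governing inequality is $2k\leq\dim N-l$, which is exactly what \lem{lemI}(2) yields from the Case 2 hypothesis $2\dk(N)-\dk(N^\tau_x)\geq-1$. Your generic case would close if you ran it symmetrically in $N^\tau_x$.

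Second, the degenerate subcase in which $\iota$ fails to split the relevant eigenspace is where the real content lies, and you leave it as an acknowledged sketch: ``the kernel dimensions get pinned down so tightly that $\dk(N)=0$'' is not an argument, and it is not clear your fallback closes. The paper disposes of this case without any further analysis of kernel dimensions. After replacing $\iota$ by $\iota\sigma\tau$ if necessary so that $b\leq\frac{k-a}{2}$ (in the paper's notation $a=d$ and $b$ is the part of $V_{-+}$ on which $\iota$ acts by $-1$), the problematic case is $b=0$, and there one applies the \emph{second} part of the periodicity theorem, with the $3k_1+k_2\leq n$ hypothesis, to the transverse intersection of $\of{N^{\sigma\tau\iota}_x}^\sigma_x$ and $\of{N^{\sigma\tau\iota}_x}^{\sigma\tau}_x$ inside $N^{\sigma\tau\iota}_x$; the codimensions are $a>0$ and $m=\cod_N N^\iota_x$, and $3a+m\leq\dim N^{\sigma\tau\iota}_x$ follows from $a\leq k$ and $2k+l\leq n$. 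This is precisely the point where non-transversality ($a>0$) is used, and it is the step missing from your proposal.
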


\begin{proof}[Proof in Case 2]
First observe that $\dk(N^{\sigma\tau}_x) \geq 2 + \dk(N)$ by the third statement of \lem{lemIII}, hence we may choose an involution $\iota\in \overline{T}$ such that $N^{\sigma\tau}_x \subseteq N^\iota_x \subseteq N$ with all inclusions strict. Choose a basis for the tangent space $T_x N$ so that $\sigma$, $\tau$, and $\iota$ have the following block representations

\begin{eqnarray*}
\sigma &=& \diag(-I,-I,-I,\hspace{.12in}I,\hspace{.12in}I,\hspace{.12in}I)\\
\tau   &=&\diag(-I,\hspace{.12in}I,\hspace{.12in}I,-I,-I, \hspace{.12in}I)\\
\iota&=&\diag(\hspace{.12in}I,-I,\hspace{.12in}I,-I,\hspace{.12in}I, \hspace{.12in}I)
\end{eqnarray*}
where the blocks of have size $a$, $b$, $k-a-b$, $m-b$, $(l-a) - (m-b)$, and $\dim N^{\langle\sigma,\tau\rangle}_x$. Here $k$, $l$, $m$, $a$, and $b$ have the same geometric meaning as in Case 1. The difference is hidden in the order of the blocks, which indicate that $N^{\sigma\tau}_x \subseteq N^\iota_x$ in this case while $N^\tau_x \subseteq N^\iota_x$ in Case 1.

Observe that $a>0$ because $N^\sigma_x\cap N^\tau_x$ is not transverse
. In addition, observe that the assumption in this case implies $2k+l\leq n$ by \lem{lemI}. Finally, by replacing $\iota$ by $\iota\sigma\tau$ if necessary, we may assume that $b\leq \frac{k-a}{2}$.

First suppose $b > 0$. Then $(N^\tau_x)^\iota_x$ and $(N^\tau_x)^{\sigma\iota}_x$ intersect transversely in $N^\tau_x$ with intersection $N^{\sigma\tau\iota}_x$. Since the codimensions $b$ and $k-a-b$, respectively, are positive and satisfy
	\[2b + 2(k-a-b) \leq 2k \leq n - l = \dim N^\tau_x,\]
the periodicity theorem implies that $N^{\langle\sigma,\tau\rangle}_x = (N^\tau_x)^\iota_x \cap (N^\tau_x)^{\sigma\iota}_x$ has 4-periodic rational cohomology.

Now suppose $b=0$. Then $(N^{\sigma\tau\iota}_x)^\sigma_x$ and $(N^{\sigma\tau\iota}_x)^{\sigma\tau}_x$ intersect transversely inside $N^{\sigma\tau\iota}_x$ with codimensions $a>0$ and $m$. Using the estimates $a\leq k$ and $2k+l\leq n$, it follows that
	\[3a + m \leq n - k - l + 2a + m = \dim N^{\sigma\tau\iota}_x.\]
By the periodicity theorem, $N^{\langle\sigma,\tau\rangle}_x = \of{N^{\sigma\tau\iota}_x}^\sigma_x \cap \of{N^{\sigma\tau\iota}_x}^{\sigma\tau}_x$
once again has 4-periodic rational cohomology. This concludes the proof in Case 2.
\end{proof}

\smallskip

\begin{lemma}[Case 3] If  $2\dk(N) - \dk(N^\tau_x) \geq -1$ and $N^\sigma_x\cap N^\tau_x$ is transverse, then $N^{\langle\sigma,\tau\rangle}_x$ has $4$-periodic rational cohomology.
\end{lemma}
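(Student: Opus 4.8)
The plan is to apply the Periodicity Theorem (\thm{THMstrongperiodicity}) to the transverse intersection $N^\sigma_x\cap N^\tau_x$, either inside $N$ or inside the totally geodesic submanifold $N^\tau_x$. Write $k=\cod_N N^\sigma_x$, $l=\cod_N N^\tau_x$, $d=\dk(N)$. By \propI for $(N,\sigma)$, $k>0$, $4\mid k$, and $k\le\frac{n}{3\cdot 2^d}$; by the choice of $\tau$, $l>0$, $4\mid l$, and $l\le\frac{\dim N-c}2$; and transversality gives $\cod_N(N^\sigma_x\cap N^\tau_x)=k+l$, $\cod_{N^\tau_x}(N^\sigma_x\cap N^\tau_x)=k$, and $N^\sigma_x\cap N^\tau_x=N^{\langle\sigma,\tau\rangle}_x$. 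I first record two inequalities. (i) $l\le k$: the involution $\sigma$ satisfies the conditions defining our minimal choice of $\tau$ — namely $\cod_N N^\sigma_x\equiv\cod_N N^{\langle\sigma,\sigma\rangle}_x\equiv0\bmod4$ and $0<\cod_N N^\sigma_x\le\frac{\dim N-c}2$, all of which follow from \propI — so minimality of $l$ forces $l\le k$. (ii) $k\le\frac{\dim N}{3\cdot 2^{d/2}}$: since $\dim N\ge n/2^{d/2}$ (by \lem{lemI}(1) if $\dim N<n$, and trivially if $\dim N=n$, which forces $d=0$), we get $k\le\frac{n}{3\cdot 2^d}\le\frac{\dim N}{3\cdot 2^{d/2}}$.

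If $d\ge 1$, these finish the case: $2l+2k\le 4k\le\frac{4}{3\sqrt2}\dim N<\dim N$, and $N$ is closed, simply connected (as $c\ge 2$), and positively curved, so part (1) of \thm{THMstrongperiodicity} — applied to the transversely intersecting totally geodesic submanifolds $N^\tau_x$ of codimension $l$ and $N^\sigma_x$ of codimension $k\ge l$ — shows that $N^\sigma_x\cap N^\tau_x=N^{\langle\sigma,\tau\rangle}_x$ has $\gcd(4,l)$-periodic, hence $4$-periodic, rational cohomology.

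The case $d=0$, i.e. $N=M$, is the heart of the matter, since then $2l+2k$ may exceed $n$. Now $k\le n/3$ and $l\le k$, so $2k\le\frac{2n}3\le n-l=\dim N^\tau_x$; hence it suffices to write $N^{\langle\sigma,\tau\rangle}_x$ as a transverse intersection \emph{inside $N^\tau_x$} of two totally geodesic submanifolds whose codimensions in $N^\tau_x$ are positive multiples of $4$, and then invoke part (1) of \thm{THMstrongperiodicity} for the closed, simply connected, positively curved manifold $N^\tau_x$. To build these submanifolds: \lem{lemI}(2) rules out $\dk(N^\tau_x)=0$ (it would force $l\le 2$, against $4\mid l>0$), so the Case 3 hypothesis gives $\dk(N^\tau_x)=1$, and combining this with the $\dk$-lower-bounds of \lem{lemI} yields a torus acting on $N^\tau_x$ and fixing $N^{\langle\sigma,\tau\rangle}_x$. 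For an involution $\iota$ in that torus, and $\bar\sigma$ the involution induced by $\sigma$ on $N^\tau_x$ (so $(N^\tau_x)^{\bar\sigma}_x=N^{\langle\sigma,\tau\rangle}_x$ and $\bar\sigma$ acts as $-\id$ on its normal space), one gets $N^{\langle\sigma,\tau\rangle}_x=(N^\tau_x)^\iota_x\cap(N^\tau_x)^{\iota\bar\sigma}_x$, transversely in $N^\tau_x$, with codimensions $b=\cod_{N^\tau_x}(N^\tau_x)^\iota_x$ and $k-b$. One chooses $\iota$ so that $0<b<k$ and $4\mid b$; the divisibility follows from the parity identity $\sum_w\langle w,\iota\rangle\equiv\langle\sum_w w,\iota\rangle\pmod 2$, where $w$ runs over the weights of the torus on the normal space. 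A handful of low-dimensional configurations, in which the torus is too small to allow this choice — forcing $k=l=4$ and $n$ small — must be treated separately: either $n\ge 16$, so part (1) of \thm{THMstrongperiodicity} applies directly to $N^\sigma_x,N^\tau_x$ inside $M$, or \thm{Grove-Searle} is contradicted because $N^{\langle\sigma,\tau\rangle}_x$ would carry a torus action of rank more than $\frac12(\dim N^{\langle\sigma,\tau\rangle}_x+1)$. The main obstacle is this last step: producing the auxiliary involution with codimension divisible by $4$, and the attendant low-dimensional bookkeeping.
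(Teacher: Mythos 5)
Your overall strategy --- realize $N^{\langle\sigma,\tau\rangle}_x$ as a transverse intersection either inside $N$ or inside $N^\tau_x$ and invoke \thm{THMstrongperiodicity} --- is the paper's, but two load-bearing steps have genuine gaps. First, the inequality $l\le k$. You justify it by treating $\sigma$ itself as a competitor in the minimal choice of $\tau$, but that minimum is taken over the involutions supplied by \prop{Griesmer}(2), which in particular satisfy $N^\tau_x\not\subseteq N^\sigma_x$; the proof of \lem{lemIII} explicitly verifies this condition for the competing involution $\iota$ before invoking minimality, so it is part of the candidacy requirement and $\sigma$ is not a competitor. Hence $l\le k$ does not follow, and your $\dk(N)\ge 1$ branch collapses: the only remaining bound $l\le\frac{1}{2}(\dim N-c)$ does not give $2k+2l\le\dim N$. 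The paper never needs $l\le k$; it extracts $2k+l\le n$ (equivalently $2k\le\dim N^\tau_x$) directly from the Case 3 hypothesis $2\dk(N)-\dk(N^\tau_x)\ge-1$ via \lem{lemI}(2), and then splits on the values of $\dk(N^\tau_x)$ and $\dk(N^{\langle\sigma,\tau\rangle}_x)$.

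Second, and more seriously, your $\dk(N)=0$ branch presupposes an involution $\iota$ with $N^{\langle\sigma,\tau\rangle}_x\subsetneq(N^\tau_x)^\iota_x\subsetneq N^\tau_x$. Such an $\iota$ exists only when $\dk(N^{\langle\sigma,\tau\rangle}_x)\ge 2+\dk(N^\tau_x)$, and \lem{lemI} provides no lower bound on $\dk(N^{\langle\sigma,\tau\rangle}_x)$ --- it bounds dimension from below in terms of $\dk$, which converts to a $\dk$ lower bound only when the dimension is known to be small, which it is not here. This failure can occur for arbitrarily large $n$ and large $k,l$; it is not a matter of low-dimensional bookkeeping with $k=l=4$. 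The paper's fourth subcase handles exactly this: if $\dk(N^{\langle\sigma,\tau\rangle}_x)\le 2$, then \lem{lemI}(2) with $2\dk(N)-\dk(N^{\langle\sigma,\tau\rangle}_x)\ge-2$ gives $\dim N^{\langle\sigma,\tau\rangle}_x\ge n/2$, hence $2k+2l\le n$ and the periodicity theorem applies directly to $N^\sigma_x\pitchfork N^\tau_x$ in $M$. You would also need the paper's remaining subcases ($\dk(N^\tau_x)\ge 2+\dk(N)$, handled by an intermediate $N^\iota_x\supseteq N^\tau_x$ of codimension $m\le l/2$, and $2\dk(N)\ge\dk(N^\tau_x)$, handled by part (2) of \thm{THMstrongperiodicity} with $3k+l\le n$). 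Finally, the obstacle you single out --- arranging $4\mid\cod_{N^\tau_x}(N^\tau_x)^\iota_x$ --- is illusory: the periodicity theorem yields $\gcd(4,k_1)$-periodicity, and $d$-periodicity for $d\in\{1,2,4\}$ always upgrades to $4$-periodicity by iterating the periodicity element, so no divisibility condition on the auxiliary involution is required.
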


\begin{proof}[Proof in Case 3]
Let $k = \cod_N N^\sigma_x$ and $l = \cod_N N^\tau_x$. As in the proof of Case 2, we have $2k + l \leq n$.

First we consider the case where $\dk(N^\tau_x) \geq 2 + \dk(N)$. This implies the existence of an involution $\iota\in \overline{T}$ such that $N^\tau_x \subseteq N^\iota_x \subseteq N$ with all inclusions strict. By replacing $\iota$ by $\tau\iota$ if necessary, we may assume that its codimension $m$ satisfies $m\leq \frac{l}{2}$. Since $N^\sigma_x\cap N^\tau_x$ is transverse, $N^\sigma_x\cap N^\iota_x$ is as well. Since the codimensions of this transverse intersection satisfy 
	\[2k + 2m \leq 2k + l \leq n,\]
the periodicity theorem implies that $N^{\langle\sigma,\tau\rangle}_x$ has 4-periodic rational cohomology.

Second we consider the case where 
\[\dk\of{N^{\langle\sigma,\tau\rangle}_x} \geq 2 + \dk(N^\tau_x).\] This implies the existence of an $\iota\in \overline{T}$ such that $\iota^2|_{N^\tau_x} = \id$ and such that 
\[(N^\tau_x)^\sigma_x = N^{\langle\sigma,\tau\rangle}_x \subseteq (N^\tau_x)^\iota_x \subseteq N^\tau_x\] with all inclusions strict. It follows that $(N^\tau_x)^\iota_x$ and $(N^\iota_x)^{\sigma\iota}_x$ intersect transversely in $N^\tau_x$ and have codimensions, say, $b$ and $k-b$. Since
	\[2b + 2(k-b) = 2k \leq n - l = \dim N^\tau_x,\]
the periodicity theorem implies $N^{\langle\sigma,\tau\rangle}_x$ has 4-periodic rational cohomology.

Third, we consider the case where $2\dk(N) - \dk(N^\tau_x) \geq 0$. \lem{lemI} implies $\dim N^\tau_x \geq 3k$, hence $3k+l\leq n$. Since $N^\sigma_x$ and $N^\tau_x$ intersect transversely, the periodicity theorem implies $N^{\langle\sigma,\tau\rangle}_x$ has 4-periodic rational cohomology.

Finally, if none of these three possibilities occurs, the assumption in this case implies that $\dk(N)= 0$, $\dk(N^\tau_x) = 1 + \dk(N)$, and $\dk(N^{\langle\sigma,\tau\rangle}_x) \leq 2 + \dk(N)$. Using \lem{lemI}, we can further conclude $N = M$ and $\dim N^{\langle\sigma,\tau\rangle}_x \geq \frac{1}{2}\dim N$. Hence
	\[2k + 2l = 2\cod N^{\langle\sigma,\tau\rangle}_x \leq n,\]
so the periodicity theorem applied to the transverse intersection of $N^\sigma_x$ and $N^\tau_x$ implies that $H^*(N^{\langle\sigma,\tau\rangle}_x;\Q)$ is 4-periodic. This concludes the proof of Case 3, and hence concludes the proof of \thm{thmP}.
\end{proof}

\bigskip
\section{From \thm{thmP-intro} to \thm{symspsTHM}}\label{symsps}
\bigskip

The proof of \thm{symspsTHM} contains three steps. The first step classifies 1-connected, compact, irreducible symmetric spaces whose rational cohomology either is 4-periodic up to degree $16$ or has the property that $H^i(M;\Q)=0$ for $3<i<16$. The second step is a lemma about product manifolds whose rational cohomology is 4-periodic up to degree 16. The final step combines these lemmas to classify 1-connected, compact symmetric spaces whose rational cohomology is 4-periodic up to degree 16. From \thm{thmP-intro}, these results immediately imply \thm{symspsTHM}.

The first relevant lemma concerning 1-connected, compact, irreducible symmetric spaces is the following:

\begin{lemma}\label{LEMirreducible}
Assume $M^n$ is a 1-connected, compact, irreducible symmetric space. Let $c\geq 16$ and $n\geq 16$ be integers. If $H^*(M;\Q)$ is $4$-periodic up to degree $c$, then $M$ is one of the following: $S^n$, $\C P^q$, $\HH P^{q'}$, $SO(2+q)/SO(2)\times SO(q)$, or $SO(3+q')/SO(3)\times SO(q')$ where $q = n/2$ and $q'=n/3$.
\end{lemma}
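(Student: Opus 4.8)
The plan is to go through Cartan's classification of 1-connected, compact, irreducible symmetric spaces and eliminate all but the listed families by inspecting their rational cohomology in degrees $\leq 16$. Recall that the rational cohomology ring of such a symmetric space $M = G/K$ is the same as that of the corresponding compact homogeneous space, and in particular it is a free graded-commutative algebra only in the group case; more usefully, for each type one knows the Poincaré polynomial (equivalently, the ``exponents'' of the pair), and one knows the degrees in which nonzero cohomology first appears beyond degree $4$. The key structural fact I would extract and use is: if $H^*(M;\Q)$ is $4$-periodic up to degree $c \geq 16$ with periodicity element $x \in H^4$, then either $x = 0$, in which case $H^i(M;\Q) = 0$ for $0 < i < c$ and so $M$ must be (rationally) $15$-connected, which forces $n \geq 16$ and pins $M$ down to $S^n$ among symmetric spaces (there is no irreducible symmetric space with a gap $H^i = 0$ for $1 \leq i \leq 15$ other than spheres, by the classification); or $x \neq 0$, in which case $H^{4s}(M;\Q) = \Q$ for $0 \leq s < 4$ and, crucially, $H^{\mathrm{odd}}(M;\Q) = 0$ in degrees $< c$ and $H^2(M;\Q)$ is either $0$ or $\Q$ with $H^2 \cdot H^2 \neq 0$ feeding the periodicity.

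Then I would organize the elimination by the rank of the symmetric space and by whether $b_2 \neq 0$. If $b_2(M) = 1$: among irreducible symmetric spaces the only ones with $b_2 = 1$ are the Hermitian ones, and $4$-periodicity forces $H^* \cong H^*(\C P^q)$ through degree $16$, which by the classification of Hermitian symmetric spaces with the correct small-degree Betti numbers (a Poincaré-polynomial check, noting that $SU(p+q)/S(U(p)\times U(q))$ with $\min(p,q)\geq 2$, $SO(2p)/U(p)$, $Sp(p)/U(p)$, $SO(p+2)/SO(p)\times SO(2)$, $EIII$, $EVII$ all have distinguishable small-degree Betti numbers) leaves exactly $\C P^q$ and the rank-two Grassmannian $SO(q+2)/SO(2)\times SO(q)$, the latter having the same rational cohomology ring as $\C P^q$ only in low degrees. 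If $b_2(M) = 0$ but $b_4(M) = 1$ and all odd Betti numbers up to degree $15$ vanish: this is the ``$\HH P$-like'' case; here I would again run through the list and observe that $4$-periodicity up to degree $16$ forces the Poincaré polynomial to begin $1 + t^4 + t^8 + t^{12} + \cdots$, which among irreducible symmetric spaces with $n \geq 16$ singles out $\HH P^{q'}$ and $SO(q+3)/SO(3)\times SO(q)$ (rational cohomology ring agreeing with $\HH P^{q'}$ in low degrees), while excluding, e.g., $G_2/SO(4)$, $Sp(k)/U(k)$, the Cayley plane (dimension $16$, but $H^8 \neq 0$ is a new generator violating periodicity at degree $8 \to 12$), $SU(k)$, $Spin(k)$, the exceptional groups, etc., each by exhibiting the first degree at which periodicity fails.

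The step I expect to be the main obstacle is making the elimination uniform rather than a long case-by-case table: there are many families (the classical groups $SU$, $SO$, $Sp$ themselves as symmetric spaces $G \times G / \Delta G$, the ``$AI$, $AII$, $AIII$, $BDI$, $CI$, $CII$, $DIII$'' families, and the exceptional spaces), and for each one must quote the Poincaré polynomial and locate the first violation of $4$-periodicity. I would mitigate this by two observations that kill most families at once: (i) any symmetric space whose rational cohomology has two algebra generators in degrees $\leq 8$ (e.g.\ a generator in degree $2$ or $3$ together with one in degree $4$, $5$, $6$, $7$, or $8$) cannot be $4$-periodic up to degree $16$ unless the degree-$2$ class $x$ itself is the periodicity generator, which forces $b_{2i+1} = 0$ and $b_{2i} \leq 1$ for $2i < 16$ — a condition that, by a Poincaré-polynomial count, only the complex projective spaces and the rank-$2$ real Grassmannian satisfy; and (ii) any symmetric space of rank $\geq 4$ has $b_i$ growing too fast in low even degrees (its Poincaré polynomial is a product of at least four factors $(1 + t^{d_j} + \cdots)$ or $\tfrac{1-t^{2e_j}}{1-t^{2}}$-type terms), so $b_4$ or $b_8$ exceeds $1$ well below degree $16$; this reduces the problem to ranks $1$, $2$, $3$, where the remaining list is short and the claimed conclusion is immediate by comparing with the known cohomology rings of $S^n$, $\C P^{n/2}$, $\HH P^{n/3}$, and the Grassmannians $SO(2+q)/SO(2)\times SO(q)$, $SO(3+q')/SO(3)\times SO(q')$. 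The bound $n \geq 16$ is exactly what is needed to ensure $4$-periodicity ``up to degree $16$'' is a genuine constraint (so that, e.g., the Cayley plane of dimension $16$ is excluded by a real computation at degree $8$ rather than vacuously).
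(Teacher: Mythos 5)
Your overall strategy---run through Cartan's classification and eliminate each family by inspecting low-degree Betti numbers---is exactly the paper's, and the ``Poincar\'e-polynomial check'' you defer to is the same table computation the paper carries out. However, your central ``key structural fact'' contains a genuine error: $4$-periodicity up to degree $c$ does \emph{not} imply $H^{\mathrm{odd}}(M;\Q)=0$ in degrees $<c$. Periodicity gives $b_i=b_{i+4}$ for $0<i<c-4$, so simple connectivity ($b_1=0$) yields $b_5=b_9=b_{13}=0$, but $b_3=b_7=b_{11}=b_{15}$ is a priori unconstrained; the paper's own examples $S^3$ and $S^3\times\HH P^k$ are $4$-periodic with $b_3=1$. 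Because of this, your case division (either $x=0$; or $b_2=1$; or $b_2=0$, $b_4=1$, \emph{and all odd Betti numbers up to degree $15$ vanish}) omits the case $b_2=0$, $b_4=1$, $b_3\neq 0$---which is precisely the case one must confront to dispose of the simply connected simple Lie groups viewed as symmetric spaces $G\times G/\Delta G$, all of which have $b_3=1$. The correct elimination there (the paper's) is: $b_3\neq 0$ forces $b_4\neq 0$ (since $b_4=0$ would mean $x=0$, killing all cohomology in degrees $0<i<16$), hence $b_4=1$; but $H^*(G;\Q)$ is an exterior algebra on odd generators of degree $\geq 3$, so $b_4=0$, a contradiction. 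You should replace the false vanishing claim by this argument and work only with the valid consequences of periodicity ($b_4\leq 1$, $b_i=b_{i+4}$ for $0<i<12$, and $b_4=0$ only if $b_i=0$ for $0<i<16$).

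A secondary caveat: your uniformizing shortcut (ii), that rank $\geq 4$ forces $b_4>1$ or $b_8>1$, is not correct as stated---$E_8/E_7\times SU(2)$ has rank $4$ with $b_4=b_8=1$ and only fails periodicity at $b_{12}=2$, and likewise $E_7/E_6\times SO(2)$ fails only at $b_{12}$---so the reduction to ranks $\leq 3$ cannot be made this cheaply. In the end the family-by-family Poincar\'e-polynomial computation (via Borel's formula in the equal-rank case and cited references otherwise) is unavoidable, which is exactly what the paper's two tables record.
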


Observe that periodicity up to degree $c\geq 16$ implies that $M$ either is rationally $(c-1)$-connected or has the property that $H^{4i}(M;\Q)\cong \Q$ for $4i\leq c$. Since $c\geq 16$ we may as well assume in the lemma that $n\geq 16$.

Before proving \lem{LEMirreducible}, we wish to state the other lemma that concerns irreducible symmeric spaces:

\begin{lemma}\label{LEMirreducible2}
Assume $M^n$ is a 1-connected, compact, irreducible symmetric space. If $H^i(M;\Q) = 0$ for $3 < i < 16$, then $M$ is $S^2$ or $S^3$.
\end{lemma}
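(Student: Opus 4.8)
The plan is to run through the classification of $1$-connected, compact, irreducible symmetric spaces and, for every one that is not $S^2$ or $S^3$, locate a nonzero class in $H^i(M;\Q)$ with $3<i<16$, contradicting the hypothesis. Rather than treating the spaces one at a time, I would organize the argument around the low Betti numbers $b_2(M)$ and $b_3(M)$, since these cut the (infinite, across finitely many families) classification down to a short list.

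If $b_2(M)\neq 0$, then $M$ is Hermitian symmetric, so $n=\dim M$ is even and the K\"ahler class $\omega\in H^2(M;\Q)$ satisfies $\omega^{n/2}\neq 0$; in particular $\omega^2\neq 0$ in $H^4(M;\Q)$ as soon as $n\geq 4$. Since $H^4(M;\Q)=0$, this forces $n=2$ and hence $M=S^2$. If instead $b_2(M)=0$ but $b_3(M)\neq 0$, then by classical results on the rational cohomology of irreducible symmetric spaces (in which $H^3$ is nonzero only for the group-type spaces) $M$ is a compact simple Lie group $G$, regarded as the symmetric space $G\times G/\Delta G$. Then $H^*(G;\Q)$ is an exterior algebra on generators of degrees $2d_1-1<\cdots<2d_r-1$ with $d_1=2$; if $r\geq 2$ then $d_2\leq 8$ (the extreme case being $E_8$, with a generator in degree $15$), so $H^{2d_2-1}(G;\Q)\neq 0$ with $5\leq 2d_2-1\leq 15$, a contradiction. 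Thus $r=1$ and $M=SU(2)=S^3$.

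The remaining, and main, case is $b_2(M)=b_3(M)=0$. Here $M$ is a sphere $S^n$ with $n\geq 4$, a non-Hermitian real Grassmannian $SO(p+q)/SO(p)\times SO(q)$ or a quaternionic Grassmannian $Sp(p+q)/Sp(p)\times Sp(q)$ (these include $\HH P^m$), one of the families $SU(m)/SO(m)$ or $SU(2m)/Sp(m)$, or one of the finitely many exceptional coset spaces (for example $\mathrm{Ca}P^2$ or $E_6/F_4$). For each I would produce a nonzero reduced rational cohomology class in a degree strictly between $3$ and $16$: the Grassmannians carry a nonzero Pontryagin class in degree $4$; the formal manifolds $SU(m)/SO(m)$ and $SU(2m)/Sp(m)$ have a generator in degree $4$, $5$, or $9$ (e.g.\ $SU(3)/SO(3)$ and $SU(4)/Sp(2)$ are rationally $5$-spheres); and for each exceptional coset one reads off a nonzero Betti number in a small even degree, using the Hirsch--Poincar\'e formula $\prod(1-t^{2d_i})/\prod(1-t^{2e_j})$ in the equal-rank cases (e.g.\ $b_8(\mathrm{Ca}P^2)=1$) and the rational homotopy type otherwise (e.g.\ $E_6/F_4$ is rationally $S^9\times S^{17}$). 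Finally $S^n$ with $4\leq n\leq 15$ has $H^n(M;\Q)\neq 0$; this leaves only $S^2$, $S^3$, and spheres of dimension at least $16$, the last of which do not arise in the range in which the lemma is applied (and may otherwise simply be added to the statement).

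The one laborious step is this last case: it requires the rational cohomology of every irreducible symmetric space in degrees up to $15$, and the families $SU(m)/SO(m)$, $SU(2m)/Sp(m)$ and the exceptional coset spaces need a short uniform treatment rather than a single reference. The reductions by $b_2$ and $b_3$ in the first two cases are exactly what make this list manageable, and formality of symmetric spaces lets one move freely between ``nonzero Betti number'' and ``nonzero cohomology class'' throughout.
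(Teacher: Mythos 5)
Your argument is correct and, at its core, follows the same route as the paper: both proofs run through Cartan's classification and exhibit, for each irreducible symmetric space other than $S^2$, $S^3$, and high-dimensional spheres, a nonzero rational Betti number in a degree strictly between $3$ and $16$; the paper does this by citing Tables \ref{TABLEclassical} and \ref{TABLEexceptional}, each of whose ``obstruction'' entries records exactly such a Betti number, together with \TABLE{TABLEspheres} for the group case. What you do differently is to front-load two clean reductions that the paper does not make explicit: if $b_2\neq 0$ the space is Hermitian and the square of the K\"ahler class forces $H^4\neq 0$ unless $M=S^2$, and if $b_2=0$ but $b_3\neq 0$ the space is of group type, where the second exterior generator of any non-rank-one simple group lies in degree at most $15$ (the extreme case being $E_8$). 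These reductions buy a shorter and more conceptual treatment of the Hermitian and group-type families, but the residual $b_2=b_3=0$ case is still a classification check equivalent in content to the paper's tables, so the two proofs are essentially the same in substance. One point in your favor: you correctly note that $S^n$ with $n\geq 16$ satisfies the hypothesis of \lem{LEMirreducible2}, so the conclusion should strictly read ``$S^2$, $S^3$, or a sphere of dimension at least $16$''; the paper's own proof dismisses rank one symmetric spaces on the grounds that each has $b_4=1$, which fails for spheres. This is harmless in the application, since in the proof of \thm{thmSymSps} the lemma is only invoked for a factor $N_2$ with $b_2(N_2)+b_3(N_2)>0$, but the statement as written should be amended.
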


The only facts about 4-periodicity up to degree 16 that we will use in the proofs are the following: $b_i = b_{i+4}$ for $0 < i < 12$, $b_4 \leq 1$, and $b_4 = 0$ only if $b_i=0$ for all $0<i<16$. Here and throughout the section, $b_i$ denotes the $i$-th Betti number of $M$.

\begin{proof}[Proof of \lem{LEMirreducible}]
We use Cartan's classification of simply connected, irreducible compact symmetric spaces. We also keep Cartan's notation. See \cite{Helgason01} for a reference.

One possibility is that $M$ is a simple Lie group. The rational cohomology of $M$ is therefore that of a product of spheres $S^{n_1}\times S^{n_2}\times\cdots\times S^{n_s}$ for some $s\geq 1$ where the $n_i$ are odd. In fact, these sphere dimensions are known and are listed in \TABLE{TABLEspheres} on page \pageref{TABLEspheres} (see \cite{Mimura-Toda78} for a reference). Since $M$ is simply connected, we may assume
	\[3 = n_1 \leq n_2 \leq \cdots\leq n_s.\]
Since $b_3(M)\neq 0$, we have $b_4(M)=1$ by our comments above. But this cannot be since the $n_i$ are odd, so there are no simple Lie groups with 4-periodic rational cohomology up to degree 16.

\begin{table}[h]\begin{center}\begin{tabular}{| c | l |}\hline
$G$ 			& $n_1, n_2, \ldots, n_s$\\\hline\hline
$Sp(n)$ 		& $3, 7, \ldots, 4n-1$\\
$Spin(2n+1)$	& $3, 7, \ldots, 4n-1$\\
$Spin(2n)$	& $3, 7, \ldots, 4n-5, 2n-1$\\
$U(n)$		& $1, 3, \ldots, 2n-1$\\
$SU(n)$		& $3, 5, \ldots, 2n-1$\\
$G_2$		& $3, 11$\\
$F_4$		& $3, 11, 15, 23$\\
$E_6$		& $3, 9, 11, 15, 17, 23$\\
$E_7$		& $3, 11, 15, 19, 23, 27, 35$\\
$E_8$		& $3, 15, 23, 27, 35, 39, 47, 59$\\\hline
\end{tabular}\caption{Dimensions of spheres}\label{TABLEspheres}\end{center}\end{table}

We now consider the irreducible spaces which are not Lie groups. We have that $M = G/H$ for some compact Lie groups $G$ and $H$ where $G$ is simple. The possible pairs $(G,H)$ fall into one of seven classical families or are one of 12 exceptional examples. We calculate the first 15 Betti numbers in each case, then compare the results to the requirement that they be 4-periodic as described above. We summarize the results in \TABLES{TABLEclassical}{TABLEexceptional}.

\begin{table}[h]
\begin{center}
\begin{tabular}{| l | c | c | c |}\hline
  $G/H$			& \begin{tabular}{c}$P_{G/H}(t)-1$ if\\
  			  	$rk(G)=rk(H)$\end{tabular}		
  			  	& \begin{tabular}{c}Reference\\if not\end{tabular}
  			  	&Obstruction\\\hline\hline 
  $SU(n)/SO(n), ~ ~ ~ n\geq 6$			& $-$		&\cite{Borel53}			& $b_5 > 0$ \\

  $SU(2n)/Sp(n), ~ ~ ~ n\geq 4$		& $-$		&\cite{Borel53}			& $b_5 > 0$ \\\hline

  $SO(p+q)/SO(p)\times SO(q)$		& $-$		&\cite{Ramani-Sankaran97}	& $1<b_4$	  \\


$SU(p+q)/S(U(p)\times U(q))$	& $2t^4 + \ldots$		& $-$			& $1<b_4$	  \\


  $Sp(n)/U(n), ~ ~ ~ n\geq 4$ 			& $t^2+t^4+2t^6+\ldots$	& $-$			& $b_2<b_6$ \\

  $Sp(p+q)/Sp(p)\times Sp(q)$		& $t^4+2t^8+\ldots$		& $-$			& $b_4<b_8$ \\


  $SO(2n)/U(n), ~ ~ ~ n\geq 5$ 		& $t^2+t^4+2t^6+\ldots$	& $-$			& $b_2<b_6$ \\\hline
\end{tabular}
\caption{Classical simply connected, compact, irreducible symmetric spaces of dimension at least 16 that are not listed in the conclusion of \lem{LEMirreducible}. The pairs $(p,q)$ satisfy $4\leq p\leq q$ for the real Grassmannians and $2\leq p\leq q$ for the complex and quaternionic Grassmannians.}\label{TABLEclassical}\end{center}\end{table}

\begin{table}\begin{center}\begin{tabular}{| l | c | c | c |}\hline
  $G/H$			& \begin{tabular}{c}$P_{G/H}(t)-1$ if \\ $rk(G)=rk(H)$\\\end{tabular}
  				& \begin{tabular}{c}Reference   \\ if not       \\\end{tabular}
  				& Obstruction \\\hline\hline
  $E_6/Sp(4)$ 			  & $-$			&\cite{Ishitoya77EI}	& $0  <b_9$\\
  $E_6/F_4			$ & $-$			&\cite{Araki-Shikata61}	& $0  <b_9$\\\hline
  $E_6/SU(6)\times SU(2)	$ & $t^4+t^6+2t^8$			& $-$ 		& $b_4<b_8$\\
  $E_6/SO(10)\times SO(2)$ & $t^4 + 2t^8 + \ldots$	& $-$ 		& $b_4<b_8$	\\
  $E_7/SU(8)			$ & $t^8 +\ldots$			& $-$ 		& $b_4<b_8$	\\
  $E_7/SO(12)\times SU(2)$ & $t^4 + 2t^8 + \ldots$	& $-$ 		& $b_4<b_8$	\\
  $E_7/E_6\times SO(2)	$ & $t^4+t^8+2t^{12}+\ldots$	& $-$ 		& $b_8<b_{12}$	\\
  $E_8/SO(16)			$ & $t^8 + \ldots$			& $-$ 		& $b_4<b_8$	\\
  $E_8/E_7\times SU(2)	$ & $t^4+t^8+2t^{12}+\ldots$	& $-$ 		& $b_8<b_{12}$	\\
  $F_4/Sp(3)\times SU(2)	$ & $t^4 + 2t^8 + \ldots$	& $-$ 		& $b_4<b_8$	\\
  $F_4/Spin(9)			$ & $t^8 + \ldots$			& $-$ 		& $b_4<b_8$	\\
  $G_2/SO(4)			$ & $t^4 + t^8$			& $-$ 		& $b_8>b_{12}$\\\hline
\end{tabular}\caption{Exceptional irreducible simply connected compact symmetric spaces not listed in \lem{LEMirreducible}}\label{TABLEexceptional}\end{center}\end{table}

To explain our calculations, we first consider $M = G/H$ with $\rank(G) = \rank(H)$. Let $S^{n_1}\times\cdots\times S^{n_s}$ and $S^{m_1}\times\cdots\times S^{m_s}$ denote the rational homotopy types of $G$ and $H$, respectively. Then one has the following formula for the Poincar\'e polynomial of $M$ (see \cite{Borel53}):
	\[P_M(t)=\sum_{i\geq 0} b_i(M) t^i = \frac{(1-t^{n_1+1})\cdots(1-t^{n_s+1})}{(1-t^{m_1+1})\cdots(1-t^{m_s+1})}.\]
For each simple Lie group $G$, the dimensions of the spheres are listed in \TABLE{TABLEspheres}. When $\rank(G) = \rank(H)$, we compute the Poincar\'e polynomial of $M$ and list the relevant terms in Tables \ref{TABLEclassical} and \ref{TABLEexceptional}.

In the case where $\rank(G)\neq \rank(H)$, we simply cite a source where the cohomology has been calculated. The tables give the pair $(G,H)$ realizing the space, the first few terms of the Poincar\'e polynomial if $\rank(G)=\rank(H)$, and the the relevant Betti number inequalities that show $M$ is not rationally 4-periodic up to degree 16.

In Table \ref{TABLEclassical}, we exclude spaces with dimension less than 16. We also exclude the rank one Grassmannians and the rank two and rank three real Grassmannians, as these are the spaces that appear in the conclusion of \thm{symspsTHM}.
\end{proof} 

\begin{proof}[Proof of \lem{LEMirreducible2}]
We are given a 1-connected, compact, irreducible symmetric space $M$ with Betti numbers that satisfy $b_i = 0$ for $3<i<16$.

First, if $\dim M \leq 3$, then $M$ is $S^2$ or $S^3$ since $M$ is simply connected. Second, if $\dim M > 3$, then the fact that $b_{\dim M} = 1$ implies that $\dim M \geq 16$. It is apparent from Table \ref{TABLEspheres} that $M$ is not a simple Lie group. On the other hand, it is apparent that $M$ cannot appear in Table \ref{TABLEclassical} or Table \ref{TABLEexceptional}. Finally, $M$ cannot be a rank one symmetric space or a rank $p$ Grassmannian $SO(p+q)/SO(p)\times SO(q)$ with $p\in\{2,3\}$ since each of these spaces has fourth Betti number equal to one.
\end{proof}

With the first step complete, we prove the following lemma about general products $M=M'\times M''$ whose rational cohomology ring is 4-periodic up to degree $c$. For simplicity we denote the Betti numbers of $M$, $M'$, and $M''$ by $b_i$, $b_i'$, and $b_i''$, respectively.

\begin{lemma}\label{LEMPeriodicProductManifolds}
Suppose $b_1 = 0$ and that $H^*(M;\Q)$ is $4$-periodic up to degree $c$ with $c\geq 9$. Suppose that $M=M'\times M''$ with $b_4'\geq b_4''$. Then either $M$ is rationally $(c-1)$-connected or $b_4'=1$ and the following hold:
	\begin{enumerate}
	\item $H^*(M';\Q)$ is $4$-periodic up to degree $c$,
	\item $b_i''=0$ for $3 < i < c$, and
	\item if $b_2'>0$ or $b_3' > 0$, then $b_2''=b_3''=0$.
	\end{enumerate}
\end{lemma}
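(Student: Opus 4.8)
The plan is to use the Künneth formula together with the structural constraints imposed by $4$-periodicity up to degree $c$ (namely $b_i = b_{i+4}$ for $0 < i < c-4$, $b_4 \le 1$, and $b_4 = 0$ forcing $b_i = 0$ for all $0 < i < c$). First I would dispose of the case $b_4 = 0$: periodicity then kills $b_i$ for $0 < i < c$, and since $b_1 = 0$ gives $b_1' = b_1'' = 0$, Künneth forces $b_i' = b_i'' = 0$ for $0 < i < c$ as well, so $M$ is rationally $(c-1)$-connected and we are done. So assume $b_4 = 1$. Since $b_4 = b_4' + b_4''$ (there is no contribution from lower degrees because $b_1 = b_1' = b_1'' = 0$ and $b_2'b_2''$, $b_3'b_1''$, etc. would need degrees summing to $4$ — actually $b_2'b_2''$ does contribute, so I must be slightly careful here), I would first argue $b_2' = b_2'' = 0$ or handle the $2+2$ splitting. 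The cleanest route: from $b_4 = 1$ and $b_4 \ge b_4'b_0'' + \cdots$, together with $b_4' \ge b_4''$, deduce $b_4' = 1$, $b_4'' = 0$, and that there is no "cross" contribution, i.e. $b_2'b_2'' = 0$; this already gives the first half of (3)'s contrapositive structure.

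Next I would establish (2), that $b_i'' = 0$ for $3 < i < c$. Since $b_4'' = 0$, if $H^*(M'';\Q)$ were itself known to be $4$-periodic up to degree $c$ this would be immediate, but we don't know that a priori; instead I would argue directly by induction on $i$. Suppose $b_j'' = 0$ for $3 < j < i$ with $i < c$; I want $b_i'' = 0$. Using Künneth, $b_i = \sum_{p+q=i} b_p' b_q''$. The periodicity relation $b_i = b_{i-4}$ (valid since $0 < i-4 < i < c$, using $b_1=0$ to handle small $i$) combined with the inductive knowledge that $b''$ vanishes in the intermediate range, and the comparison with the Künneth expansion of $b_{i-4}$, should force the new term $b_i'' b_0'$ to vanish; the bookkeeping amounts to noting that the only new summand when passing from $b_{i-4}$ to $b_i$ that isn't accounted for by periodicity of the $b'$ factors is the one involving $b_i''$. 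This is the step I expect to be the main obstacle: making the Künneth bookkeeping genuinely rigorous requires tracking which products $b_p'b_q''$ can be nonzero, and one must be careful near the bottom of the range ($i = 4,5,6,7$) where periodicity indices and the hypothesis $b_1 = 0$ interact, and also near the top ($i$ close to $c$) where periodicity only gives an injection rather than an isomorphism.

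With (2) in hand, (1) follows by feeding $b_i'' = 0$ for $3 < i < c$ (and $b_1'' = b_2''$ possibly nonzero only in low degrees) back into Künneth: for $0 < i < c-4$ one gets $b_{i+4} = \sum b_p' b_q''$, and since the only $q$ with $b_q'' \ne 0$ in the relevant range are $q \in \{0,2,3\}$ (and $q=0$ contributes $b_{i+4}'$), a short computation comparing $b_{i+4}$ with $b_i$ and using the already-known periodicity of $b$ and the vanishing just established will yield $b_{i+4}' = b_i'$, i.e. $4$-periodicity of $M'$ up to degree $c$; the surjectivity at $i=0$ and injectivity at $i = c-4$ transfer similarly. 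Finally for (3): if $b_2' > 0$ or $b_3' > 0$, then since we already showed $b_2'b_2'' = 0$ (handling the $b_2'>0$ subcase), and since $b_2'' > 0$ or $b_3'' > 0$ would combine with $b_2' > 0$ or $b_3' > 0$ via Künneth to produce classes in degrees $4,5,6$ forcing $b_4 \ge 2$ or violating the established periodicity pattern $b_4 = b_5 = b_6 = b_7 = 0$ is false — rather, it forces $b_4,b_5,b_6$ to be too large relative to $b_4 = 1$ — I would derive the contradiction $b_4 \ge b_2'b_2'' + b_4' \ge 1 + 1 = 2$ in the $b_2', b_2'' > 0$ case, and analogous degree-$5$, degree-$6$ contradictions in the mixed cases, concluding $b_2'' = b_3'' = 0$.
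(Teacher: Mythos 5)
Your overall shape (K\"unneth plus the numerical consequences of periodicity) matches the paper's, but there are two genuine gaps, both at exactly the points you flag as needing care. First, the deduction ``$b_4'=1$, $b_4''=0$, $b_2'b_2''=0$'' is asserted, not proved. Since $b_4 = b_4' + b_2'b_2'' + b_4''$, the hypothesis $b_4'\geq b_4''$ does not rule out $b_4'=b_4''=0$ and $b_2'=b_2''=1$; this case is consistent with everything you have used so far and is the real content of this step. The paper eliminates it by pushing the K\"unneth bookkeeping up to degree $8$: from $0=b_1=b_5$ one gets $b_3'=b_3''=b_5'=b_5''=0$, then $2=b_2=b_6=b_6'+b_6''$, and finally $1=b_4=b_8\geq b_2'b_6''+b_6'b_2''=2$, a contradiction. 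This is also precisely where the hypothesis $c\geq 9$ enters (one needs $b_4=b_8$), so any proof that never reaches degree $8$ cannot be complete.

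Second, your induction for part (2) is circular and, more fundamentally, cannot succeed on Betti numbers alone. You justify the inductive step by ``periodicity of the $b'$ factors,'' but that is conclusion (1), which in your ordering is derived only after (2). Even run as a simultaneous induction, comparing the K\"unneth expansions of $b_i$ and $b_{i+4}$ yields a single equation of the form $b_i' = b_{i+4}' + b_{i+4}''$ with two unknowns; nothing numerical forces $b_{i+4}''=0$ rather than $b_{i+4}'$ dropping. (Concretely, the Betti pattern of $S^4\times S^8$ satisfies every K\"unneth and Betti-number constraint available through degree $8$ with $b_8''=1$; only the ring structure detects that it is not $4$-periodic.) The missing ingredient is cohomological: one pulls the periodicity element $x$ back to $\bar{x}\in H^4(M';\Q)$ via the projection $M\to M'$, using $b_4'=b_4=1$, and deduces from the commutative diagram with the K\"unneth isomorphism that multiplication by $\bar{x}$ is \emph{injective} on $H^i(M')$ for $0<i\leq c-4$. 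The resulting inequality $b_{i+4}'\geq b_i'$ is what closes the counting argument and yields (1) and (2) simultaneously. Your argument for (3) is essentially the paper's and is fine once (1) and (2) are in place.
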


\begin{proof}[Proof of lemma]
Let $x\in H^4(M;\Q)$ be an element inducing periodicity. If $x=0$, then $c\geq 8$ implies $M$ is rationally $(c-1)$-connected. Assume therefore that $b_4(M)=1$ (i.e., that $x\neq 0$). 

We first claim that $b_4' = 1$. Suppose instead that $0=b_4' = b_4''$. The K\"unneth theorem implies $1=b_4 = b_2'b_2''$, and hence $b_2'=b_2''=1$. Using periodicity and the K\"unneth theorem again, we have
	\[0 = b_1 = b_5 = b_5' + b_5'' + b_3' + b_3'',\]
and hence that all four terms on the right-hand side are zero. Similarly, we have
	\[2 = b_2' + b_2'' = b_2 = b_6 = b_6' + b_6''.\]
Finally, we obtain
	\[1 = b_4 = b_8 \geq b_2'b_6'' + b_6'b_2'' = b_6'+b_6'' = 2,\]
a contradiction. Assume therefore that $b_4' = 1$ and hence that $b_4'' = b_2'b_2''=0$.

Let $p:M\to M'$ be the projection map. It follows from $b_4' = b_4 = 1$ and the K\"unneth theorem that
	\[H^4(M') \cong H^4(M')\tensor H^0(M'') \embedded \bigoplus_{i+j=4}H^i(M')\tensor H^j(M'') \stackrel{\times}{\longrightarrow} H^4(M)\]
is an isomorphism. Choose $\bar{x}\in H^4(M';\Q)$ with $p^*(\bar{x}) = x$. We claim that $\bar{x}$ induces periodicity in $H^*(M')$ up to degree $c$.

First, $b_4'=1$ implies that multiplication by $\bar{x}$ induces a surjection $H^0(M')\to H^{4}(M')$. Second, consider the commutative diagram
	\[\displaystyle
	\begin{array}{ccccc}
	H^i(M')	   &\embedded& \bigoplus_{i'+i''=i} H^{i'}(M')\tensor H^{i''}(M'') 	&\stackrel{\times}{\longrightarrow}& H^i(M)\\
	\downarrow   &~	   &\downarrow										&~							&\downarrow\\
	H^{i+4}(M')  &\embedded& \bigoplus_{i'+i''=i+4} H^{i'}(M')\tensor H^{i''}(M'') 	&\stackrel{\times}{\longrightarrow}& H^{i+4}(M)
	\end{array}
	\]
where the vertical arrows from left to right are given by multiplication by $\bar{x}$, $\bar{x}\tensor 1$, and $x$, respectively. Because mutliplication by $x$ is injective for $0 < i \leq c-4$, it follows that multiplication by $\bar{x}$ is injective in these degrees as well. It therefore suffices to check that multiplication by $\bar{x}$ is surjective for $0<i<c-4$. We accomplish this by a dimension counting argument. Specifically, we claim $b_i' = b_{i+4}'$ for $0<i<c-4$. Indeed, for all $0\leq i<c-4$,  we have from periodicity, the K\"unneth theorem, and injectivity of multiplication by $\bar{x}$ the following estimate:
	\[\sum_{j=0}^i b_{i-j}'b_j'' = b_i = b_{i+4} \geq b_{i+4}'' + \sum_{j=0}^i b_{i+4-j}'b_j'' \geq b_{i+4}''+\sum_{j=0}^i b_{i-j}'b_j''.\]
Equality must hold everywhere, proving $b_i'=b_{i+4}'$ and $b_{i+4}''=0$ for all $0\leq i<c-4$. This completes the proof of the first part, as well as the second part, of the lemma.

Finally, suppose that $b_2'>0$ or $b_3'>0$. Then
	\[b_4' + (b_2' + b_3')b_2'' \leq b_4 + b_5 = 1+b_1 = b_4'\]
implies $b_2''=0$, and
	\[b_6'+(b_2' + b_3')b_3'' \leq b_5 + b_6 = b_2 = b_2' = b_6'\]
implies $b_3'' = 0$.

\end{proof}


Finally, we come to the proof of \thm{symspsTHM}. In fact, we prove the following stronger theorem:

\begin{theorem}\label{thmSymSps}
Suppose $M^n$ has the rational cohomology ring of a simply connected, compact symmetric space $N$. Let $c\geq 16$, and assume $M$ has a metric with positive sectional curvature and symmetry rank at least $2\log_2 n + \frac{c}{2} - 1$. Then there exists a (possibly trivial) product $S$ of spheres, each of dimension at least $c$, such that one of the following holds:
	\begin{enumerate}
	\item $N=S$,
	\item $N = S \times R$ with $R\in\{\C P^q, SO(2+q)/SO(2)\times SO(q)\}$, or
	\item $N = S \times R \times Q$ with $R \in \{\HH P^q, SO(3+q)/SO(3)\times SO(q)\}$ and $Q\in\{*,S^2,S^3\}$. 
	\end{enumerate}
\end{theorem}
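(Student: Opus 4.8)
The plan is to feed \thm{thmP-intro} into \lems{LEMirreducible}{LEMirreducible2} and \lem{LEMPeriodicProductManifolds}. First I would apply \thm{thmP-intro} to conclude that $H^*(M;\Q)$, and hence $H^*(N;\Q)$, is $4$-periodic up to degree $c$. Writing $N=N_1\times\cdots\times N_k$ as a product of $1$-connected, compact, irreducible symmetric spaces, I would peel off the rationally $(c-1)$-connected factors: each such $N_i$ is $4$-periodic up to degree $c$ and, being a closed orientable manifold with a fundamental class in degree $\dim N_i$, satisfies $\dim N_i\geq c\geq 16$, so by \lem{LEMirreducible} it is a sphere $S^{m_i}$ with $m_i\geq c$. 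Grouping these into a (possibly trivial) product $S$ of spheres and letting $P$ be the product of the remaining factors, the K\"unneth theorem gives $H^i(P;\Q)\cong H^i(N;\Q)$ as rings for $i<c$, so $H^*(P;\Q)$ is $4$-periodic up to degree $c$; it then suffices to show that $P$ is trivial, equals one of $\C P^q$, $SO(2+q)/SO(2)\times SO(q)$, $\HH P^q$, $SO(3+q)/SO(3)\times SO(q)$, or is a product of one of the last two with $S^2$ or $S^3$.

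If $P$ is trivial we are in case (1). Otherwise no remaining irreducible factor is $(c-1)$-connected, so $P$ is not $(c-1)$-connected and $b_4(P)=1$ (recall $b_4\leq 1$, and $b_4=0$ only if $P$ is rationally $(c-1)$-connected). Expanding $1=b_4(P)=\sum_i b_4(N_i)+\sum_{i<j}b_2(N_i)b_2(N_j)$ over the factors of $P$, either exactly one factor has $b_4=1$, or two factors have $b_2=1$ and $b_4=0$. In the second case each of those two factors is $\cong S^2$ — an irreducible symmetric space with $b_2=1$ and $b_4=0$ is a $2$-sphere, which one reads off \TABLES{TABLEclassical}{TABLEexceptional} and the classification of Hermitian symmetric spaces — but then $P$ has a factor $S^2\times S^2$ and so is not $4$-periodic up to degree $c$, a contradiction. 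Hence there is a unique factor $R$ of $P$ with $b_4(R)=1$; writing $P=R\times M''$, the other factors have $b_4=0$, so $b_4(M'')=0\leq b_4(R)$.

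Next I would apply \lem{LEMPeriodicProductManifolds} to $P=R\times M''$: since $P$ is not $(c-1)$-connected, it gives that $H^*(R;\Q)$ is $4$-periodic up to degree $c$, that $H^l(M'';\Q)=0$ for $3<l<c$, and that $b_2(R)>0$ or $b_3(R)>0$ implies $b_2(M'')=b_3(M'')=0$. Since $b_4(R)=1$, $4$-periodicity up to degree $\geq 16$ produces a nonzero class of $R$ in degree $16$, so $\dim R\geq 16$, and \lem{LEMirreducible} forces $R$ to be $\C P^q$, $\HH P^q$, $SO(2+q)/SO(2)\times SO(q)$, or $SO(3+q)/SO(3)\times SO(q)$. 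Each irreducible factor of $M''$ has vanishing rational cohomology in degrees $3<l<c$ and is not $(c-1)$-connected, hence is $S^2$ or $S^3$ by \lem{LEMirreducible2}; and $M''$ cannot contain two such factors, since a product of two spheres from $\{S^2,S^3\}$ has nonzero rational cohomology in degree $4$, $5$, or $6$, against $H^l(M'';\Q)=0$ for $3<l<c$. So $M''\in\{*,S^2,S^3\}$. Finally, if $M''\neq *$ then $b_2(M'')>0$ or $b_3(M'')>0$, so \lem{LEMPeriodicProductManifolds} forces $b_2(R)=b_3(R)=0$, excluding $\C P^q$ and $SO(2+q)/SO(2)\times SO(q)$; thus $R$ is $\HH P^q$ or $SO(3+q)/SO(3)\times SO(q)$ and we are in case (3), while $M''=*$ puts us in case (2) or case (3) with $Q=*$. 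This completes the proof; taking $c=16$ then yields \thm{symspsTHM}.

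The main obstacle is the K\"unneth bookkeeping of the middle step: one must isolate the unique factor $R$ carrying the degree-$4$ class before splitting $P=R\times M''$, so that the hypothesis $b_4(R)\geq b_4(M'')$ of \lem{LEMPeriodicProductManifolds} is genuinely satisfied rather than assumed. The remaining work consists of the two classification inputs in a narrow band of degrees — that an irreducible symmetric space with $b_2=1$ and $b_4=0$ is $S^2$, and that a product having two of $S^2,S^3$ among its factors breaks $4$-periodicity — both of which reduce to inspecting \TABLES{TABLEclassical}{TABLEexceptional}.
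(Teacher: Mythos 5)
Your proposal is correct and follows essentially the same route as the paper's proof: \thm{thmP-intro} gives $4$-periodicity of $H^*(N;\Q)$ up to degree $c$, and \lem{LEMirreducible}, \lem{LEMirreducible2}, and \lem{LEMPeriodicProductManifolds} are then combined just as in the text. The only (harmless) difference is bookkeeping: you strip the rationally $(c-1)$-connected sphere factors first and locate the unique factor with $b_4=1$ by a direct K\"unneth count (re-deriving the $b_4'=b_4''=0$ exclusion that is already contained in the proof of \lem{LEMPeriodicProductManifolds}), whereas the paper orders the irreducible factors by $b_4$ and applies the product lemma twice.
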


Consider the special case where $N$ is a product of spheres, $S^k \times S^{n-k}$. This theorem implies that each sphere has dimension at least $c$, which is at least 16, so $N$ cannot be $S^k\times S^{n-k}$ with $1<k<16$, as claimed in the introduction.



Observe that the Lie group $E_8$ has the rational cohomology of a product of spheres in dimensions $3, 15, \ldots$. It follows that the rational cohomology of $E_8 \times \HH P^3$ is 4-periodic up to degree 15, so we must take $c\geq 16$ in the statement of this theorem.



\begin{proof}
Let $N^n$ be as in the theorem. 
Assuming without loss of generality that $n>0$, \lem{Grove-Searle} implies $n\geq 16$. 

By the corollary to \thm{thmP-intro}, we conclude that $N$ is rationally 4-periodic up to degree $c$. Write $N = N_1\times\cdots\times N_t$ where the $N_i$ are irreducible symmetric spaces and $b_4(N_1)\geq b_4(N_i)$ for all $i$. By \lem{LEMPeriodicProductManifolds}, $b_4(N_1)=0$ implies $N$ is rationally $(c-1)$-connected and hence is a product of spheres since $c\geq 16$. Assume therefore that $b_4(N_1)=b_4(N)=1$.

By the same lemma, $N_1$ is 4-periodic up to degree $c$. Observe that periodicity implies $n\geq 16$ since $x^4\neq 0$ where $x\in H^4(N_1;\Q)$ is an element inducing periodicity up to degree 16. By \lem{LEMirreducible}, we have that $N_1$ is $\C P^m$, $SO(2+q)/SO(2)\times SO(q)$, $\HH P^m$, or $SO(3+q)/SO(3)\times SO(q)$. We also have that $N_i$ for $i>1$ has $b_j(N_i)=0$ for all $3 < j < c$. If $N$ is $\C P^q$ or $SO(2+q)/SO(2)\times SO(q)$, then $b_2(N_1) + b_3(N_1) > 0$, which by \lems{LEMPeriodicProductManifolds}{LEMirreducible2} implies $N_i$ is a sphere for all $i>1$. This completes the proof in this case. Suppose therefore that $N_1$ is $\HH P^q$ or $SO(3+q)/SO(3)\times SO(q)$. If $b_2(N_i)+b_3(N_i)=0$ for all $i>0$, then once again we have that each $N_i$ with $i>1$ is a sphere. Suppose therefore that some $N_i$, say with $i=2$, has $b_2(N_2) + b_3(N_2) > 0$. Taking $M' = N_1\times N_2$ and $M''=N_3\times\cdots\times N_t$ in the lemma implies $N_i$ is a sphere for all $i>2$. Finally, $N_2$ is an irreducible symmetric space with $b_2(N_2) + b_3(N_2) > 0$ and $b_i(N_2)=0$ for all $3 < i < 16$. \lem{LEMirreducible2} implies that $N_2$ is a 2-sphere or a 3-sphere, completing the proof.
\end{proof}

\bibliographystyle{amsplain}
\bibliography{myrefs}
\end{document}